\documentclass{article}
\usepackage{graphicx}
\usepackage[letterpaper,top=2cm,bottom=2cm,left=3cm,right=3cm,
marginparwidth=1.75cm]{geometry}
\usepackage{mathtools}
\usepackage{bm, bbm}
\usepackage{amsmath, amsthm}
\usepackage{amssymb}
\usepackage{dsfont}
\usepackage{booktabs}
\usepackage{xcolor}
\usepackage[table]{xcolor} 
\usepackage{algorithm}
\usepackage{algpseudocode}
\usepackage{enumitem}
\usepackage{multirow}
\usepackage{float}
\usepackage[colorlinks,citecolor=blue,linkcolor=blue,urlcolor=blue]{hyperref}
\usepackage{tikz}
\usetikzlibrary{arrows.meta,positioning}
\newcommand{\gap}{\vspace{0.05in}}
\newcommand{\epc}{\hspace{1pc}}
\newcommand{\wh}{\widehat}

\newcommand{\thalf}{\frac{1}{2}}
\newcommand{\onebld}{{\bf 1}}
\usepackage{tikz}

\newtheorem{theorem}{Theorem}
\newtheorem{proposition}{Proposition}

\newtheorem{remark}{Remark}
\newtheorem{assumption}{Assumption}
\newtheorem{example}{Example}

\title{A New Perspective on Clustering: A Mixed-norm Model \\ 
and its Solution by Progressive Integer Programming}

\author{{\bf Junyi Liu}\footnote{Department of Industrial 
Engineering, Tsinghua University, Beijing, 100084, China.
{\tt Email: junyiliu@tsinghua.edu.cn}}, 
{\bf Yulin Peng}\textsuperscript{*}\footnote{\textsuperscript{*}Corresonding author. The Daniel J.\ Epstein Department 
of Industrial and Systems Engineering, University of Southern 
California, Los Angeles,
California 90089, U.S.A. {\tt Email: yulinpen@usc.edu}}, 
{\bf Yao Xie}\footnote{H.\ Milton Stewart School of Industrial 
and Systems Engineering, Georgia Institute of Technology, Atlanta,
Georgia 30332-0205, U.S.A.  {\tt Email: yaoxie@isye.gatech.edu}}, 
{\bf Yancheng Yuan}\textsuperscript{*}\footnote{\textsuperscript{*}Corresonding author. Department of Applied 
Mathematics, Hong Kong Polytechnic University, Kowloon, Hong Kong.
{\tt Email: yancheng.yuan@polyu.edu.hk}}, 
and {\bf Jong-Shi Pang}\footnote{Postal address same as Yulin Peng.
{\tt Email: jongship@usc.edu}}}

\date{\today}

\begin{document}

\maketitle

\vspace{-0.2in}

\begin{abstract}
\noindent Extending the classical $K$-means and $K$-medians 
models for clustering, this paper introduces a
$\ell_{p,q}$ mixed-norm clustering model where the centroid 
updates and the cluster assignments are under the $\ell_p$ norm 
and $\ell_q$ norm, respectively. The model 
is formulated as a mixed-integer program (MIP) with constraints stated 
in terms of Heaviside composite functions that describe the 
nearest-center assignments. % step.
The framework recovers $K$-means and
$K$-medians when $p=q=2$ and $p=q=1$, respectively, and yields new
models when $p\ne q$. To address the computational challenges, we
develop a progressive integer programming (PIP) method that adaptively
fixes confident assignments and solves a sequence of restricted 
mixed-integer subproblems.  For $q=1$, we develop a convex inner 
approximation of the difference-of-convex constraints in the 
restricted subproblems, for which a global solution can
be computed. Importantly, we establish the connection between 
the local minimizer 
and the strong center-local minimizer of the mixed-norm clustering problem and the (global) optimal solution of the restricted subproblems under certain assumptions. This connection provides a practical certificate of the local minimizer of the nonconvex mixed-norm clustering model. We further develop effective techniques for constructing adaptive fixing sets and working sets for the case of $q = 1$.  Extensive numerical experiments demonstrate the superior performance of the mixed-norm clustering model and the efficiency of the PIP method for solving the MIP model, which may be intractable otherwise. In particular, the $\ell_{2,1}$ mixed-norm clustering 
model is effective under
coordinate-sparse, mean-balanced contamination, whereas the $\ell_{1,2}$ 
mixed-norm clustering model is
preferred under dense coordinatewise Cauchy contamination. 
The numerical results also show
that PIP can escape poor alternating solutions and obtain substantially 
better feasible clustering, while preserving strong
warm starts when no improvement is found.  These results position 
mixed-norm clustering as a flexible modeling framework and PIP as 
an efficient algorithm for solving it.
\end{abstract}

\vspace{-0.2in}

\section{Introduction}\label{sec:intro}

Clustering is a fundamental task in unsupervised learning: 
given observations
$\{\xi^s\}_{s=1}^N\subset\mathbb{R}^d$ and a prescribed number 
of clusters $K>1$, the
goal is to partition the observations into groups whose members 
are similar to
one another and dissimilar to members of other groups.  
In centroid-based clustering, each group is represented by a 
center $c^k\in\mathbb{R}^d$, and a typical clustering algorithm 
jointly finds the cluster centroids and the cluster assignments 
of the data points by minimizing the sum of distances for each 
data point to its assigned centroid. The classical K-means 
and K-medians \cite{macqueen1967,Lloyd1982} are two prominent 
models which are widely applied in computer vision, healthcare, finance, renewable energy management, etc. They are known to be 
NP-hard \cite{mahajan2009planar, cohen2021inapproximability}, 
due to the combinatorial nature of the clustering problem. 
In practice, alternating algorithms with initialization 
enhancements \cite{arthur2007kmeans++} are 
popular, which iteratively refine $K$ representative centers 
$c^1, \cdots, c^K$ each in $\mathbb{R}^d$ and perform 
a corresponding assignment of points to centers.  We provide
more details of such 
an alternating method in Subsection~\ref{subsec:matched norm}. 

Two distinct norm-based modeling decisions are embedded in any such 
center-based method:
one such norm governs cluster assignments (how each point is matched 
to its nearest center)
and another norm governs the similarity objective (how within-cluster 
spread is measured). The two popular K-means and K-medians models use 
the same norm for both purposes. This coupling is natural, but it is 
not logically necessary.  The geometry that
best expresses cluster membership need not be the loss that best 
measures the quality of a fitted center.  For example, one may wish 
to retain Euclidean Voronoi assignments while using an 
absolute-deviation loss to reduce the
influence of outliers, or to form assignments in Manhattan
geometry.  Decoupling these two
roles therefore enlarges the modeling space while preserving the 
familiar center-and-assignment interpretation. An example shown in 
Figure~\ref{fig:claim1prime} demonstrates that proper decoupling 
of the two norms can achieve better clustering performance in practice.

\begin{figure}[t]
  \centering
  \includegraphics[width=\textwidth]{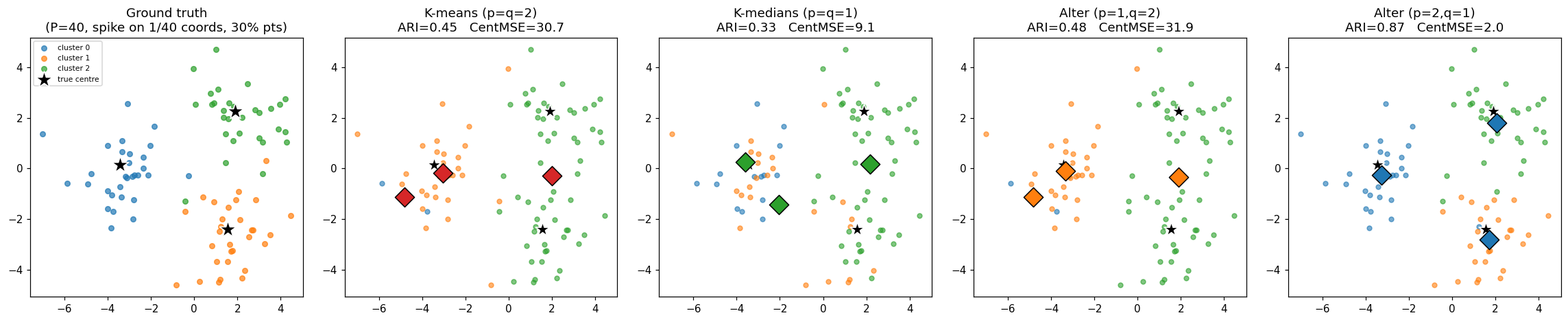}
  \caption{  A single representative instance of the sparse-corruption model
    ($K=3$, $P=40$, projected onto the plane of the three true centers).
    Stars: true centers; colored dots: each method's cluster assignment;
    diamonds: estimated centroids.  Only the mixed pair $(p{=}2,q{=}1)$
    recovers all three centers (ARI $0.87$, CentMSE $2.0$); the two $\ell_2$-%
    assignment methods strand a centroid in the empty middle, and K-medians
    keep closer centroids but still misassign a whole cluster.}
  \label{fig:claim1prime}
\end{figure}

Motivated by this observation, we
introduce the
$\ell_{p,q}$ mixed-norm clustering problem for 
$p, q \in \{ 1,2 \}$, where the within-cluster loss is based
on the $\ell_p$-norm and the cluster assignments
are based on the $\ell_q$-norm, with possibly $p \neq q$. 
Let $[ \, m \, ] \triangleq\{1,\ldots,m\}$ for every positive 
integer $m$. 
The $\ell_{p,q}$ mixed-norm clustering problem is formulated as the 
following mixed-integer program with a bi-convex objective function
and a Heaviside constraint:
\begin{equation} \label{eq:pq}
\begin{array}{ll}
\displaystyle{
\operatornamewithlimits{\mbox{\bf minimize}}_{\boldsymbol{c}, \, 
\boldsymbol{z}}
} & F_{p, q}(\boldsymbol{c}, \boldsymbol{z}) \triangleq \underbrace{\displaystyle{
\frac{1}{N}
} \, \displaystyle{
\sum_{s=1}^N
} \, \left[ \, \displaystyle{
\sum_{k=1}^K
} z_{sk} \, \| \, \xi^s - c^k \, \|_p^p \, \right]}_{\mbox{bi-convex}} 
\\ [0.2in]
\mbox{\bf s.t.} & \displaystyle{
\sum_{k=1}^K
} \, z_{sk} = 1, \hspace{0.25in} \forall \, s \, \in \, [ \, N \, ], \\ [0.2in]
& 
z_{sk} \, \leq \, \onebld_{[ \, 0,\infty )}\left( \, \displaystyle{
\min_{k^{\prime} \neq k}
} \, \| \, \xi^s - c^{k^{\prime}} \, \|_q - 
\| \, \xi^s - c^k \, \|_q \, \right), \hspace{0.25in} \forall (s,k) \, \in \, 
[ \, N \, ] \times [ \, K \, ] \\ [0.2in]
& z_{sk} \, \in \, \{ \, 0,1 \, \}, \hspace{0.25in} \forall (s,k) \, \in \, 
[ \, N \, ] \times [ \, K \, ],
\end{array}
\end{equation}
where $\| \, \bullet \, \|_{p}$ and $\| \, \bullet \, \|_{q}$ are the $\ell_p$ 
and $\ell_q$ vector
norms, respectively, and $\onebld_{[ \, 0,\infty )}( \bullet )$ is the 
Heaviside (i.e., indicator) function of the closed interval 
$[ 0, \infty )$; i.e., 
\[\onebld_{[ \, 0,\infty )}(t) = \left\{ \begin{array}{ll}
1 & \mbox{if $t \geq 0$} \\ [3pt]
0 & \mbox{if $t < 0$.}	
\end{array} \right.\]
The Heaviside constraint
requires every observation to be assigned to a nearest center in the
$\ell_q$-norm. Ties are allowed and are resolved by the binary assignment
variables.  The objective then evaluates the resulting partition using the
the $\ell_p$-norm (to avoid nonsmoothness of the objective, this norm is
raised to the $p$-th power). The introduced
optimization formalization in the form of a mixed-binary program provides
a solid foundation for a systematic treatment by a unified computational 
approach, enabling in particular generalizations to include constraints 
and the easy adoption of other clustering criteria. The mixed-norm clustering model \eqref{eq:pq} includes K-Means (i.e., $p = q = 2$) and K-Medians (i.e., $p = q = 1$) as special cases. When $p=q$, nearest-center assignment is already
consistent with minimization of the fitting objective.  Consequently, at the
level of problem formulation and solution, the Heaviside constraints are redundant 
and \eqref{eq:pq} reduces to
\begin{equation} \label{eq:p=q}
\begin{array}{ll}
\displaystyle{
\operatornamewithlimits{\mbox{\bf minimize}}_{\boldsymbol{c}, \, 
\boldsymbol{z}}
} & F_{p, p}(\boldsymbol{c}, \boldsymbol{z}) \triangleq \underbrace{\displaystyle{
\frac{1}{N}
} \, \displaystyle{
\sum_{s=1}^N
} \, \left[ \, \displaystyle{
\sum_{k=1}^K
} z_{sk} \, \| \, \xi^s - c^k \, \|_p^p \, \right]}_{\mbox{within-cluster
deviation (WCD$_p$)}} \\ [0.45in]
\mbox{\bf subject to} & \displaystyle{
\sum_{k=1}^K
} \, z_{sk} = 1, \hspace{0.25in} \forall \, s \, \in \, [ \, N \, ] \\ [0.2in]
& z_{sk} \, \in \, \{ \, 0,1 \, \}, 
\hspace{0.16in} \forall \, (s,k) \, \in \, [ \, N \, ] \, \times \, [ \, K \, ]. 
\end{array}
\end{equation}
This is a significant simplification from the mixed case with $p \neq q$. 
The matched-norm cases have been extensively studied as a mathematical programming 
problem \cite{bradley1999mathematical, bradley2000kplane}.  Early mathematical
programming formulations include the binary model for minimum within-group
sum of squares in \cite{rao1971cluster} and the concave-minimization treatment
of $K$-medians in \cite{bradley1997clustering}.  Subsequent work developed
exact and relaxation-based methods for $K$-means, including column generation
\cite{aloise2012improved} and semidefinite programming
\cite{peng2007approximating,piccialli2022sossdp}.  Bregman-divergence clustering 
models also broaden
the geometry used in clustering, but they retain a common divergence for
assignment and within-cluster evaluation \cite{banerjee2005clustering}. 

The mixed-norm case $p\ne q$ is structurally different and remains 
largely unexplored.
An assignment chosen by
$\ell_q$-proximity need not decrease the $\ell_p$ objective, while a 
center
update that reduces the $\ell_p$-loss can invalidate the $\ell_q$
nearest-center conditions.  Thus the descent mechanism underlying 
the usual
alternating method is lost, and the feasible assignment mapping changes
discontinuously with the centers. In \cite{herold2026clustering}, the 
authors study a general $(f,g)$-clustering 
problem in which the assignment criterion $f$ and the centering objective $g$ 
are allowed to differ,
making the framework closest in spirit to the $\ell_{p,q}$-clustering.
The analysis in the reference focuses on approximation guarantees for 
specific choices of $f$ and $g$
within the $p = q$ regime; the mixed-norm case $p \neq q$ is not considered.  
To the best of our knowledge, the specific
model \eqref{eq:pq} with $p\ne q$ is to date at best minimally studied and analyzed
rigorously, if at all. This gap is important both theoretically and computationally.

Integer programming (IP) offers a principled way to handle the general
mixed-norm clustering problem, 
but a direct solution of the full IP formulation can be prohibitively expensive.  
Even for the classical $K$-means model, sophisticated exact methods require 
specialized decomposition, bounding, and relaxation techniques
\cite{aloise2012improved,piccialli2022sossdp}; the mixed-norm constraints add a
further layer of combinatorial and discontinuous structure.  We address this
computational challenge through the novel methodology of progressive 
integer programming (PIP); this approach is built on 
the framework of Heaviside composite optimization that is best suited 
for decision-making problems governed by discontinuous comparisons
\cite{CuiLiuPang23-piecewise,HanCuiPang25}.  This line of research originated
in optimization models for individualized decision making
\cite{QiCuiLiuPang19} that has inspired the PIP methods for constrained Heaviside
programs and other challenging nonconvex optimization problems
\cite{FangLiuPang25,ZhangHanPang26,ZhengLiuWangPang26}. Instead of applying an 
off-the-shelf solver to solve the entire mixed-binary model directly, 
the progressive idea iteratively
solves a sequence of restricted subproblems in which a subset of the binary variables 
is adaptively fixed and updated based on a current candidate solution.  This simple
idea enables a
standard solver, such as {\sc Gurobi} \cite{gurobi}, to be effectively applied to 
compute solutions for otherwise intractable mixed IP models. Starting from a feasible 
solution, such as one obtained by an
alternating method, the PIP method generates non-worsening solutions
and may escape ``bad'' fixed points due to the combinatorial search with the sequence of restricted IPs solved.  We will show that, under the assumptions stated later, the solution obtained by PIP with an affordable computational cost is a strong local minimizer of \eqref{eq:pq}. This provides a stronger guarantee than the Nash property of the obtained solutions by an alternating algorithm even for the cases $p = q$. It is worth mentioning that a modified algorithm has been developed recently in \cite{li2025modified} to obtain a local optimal solution of K-means. In principle, 
the PIP scheme can be applied
to any combined pair of $(p,q)$ as long as there are effective IP algorithms
for solving the reduced subproblems in the PIP algorithm.
% that may be nonconvex with binary variables.  
Nevertheless, handicapped by the lack of such methods and available 
implementations for solving nonconvex mixed-binary programs, we put particular focus on a class of (\ref{eq:pq}) that 
can be effectively handled by state-of-the-art IP solvers, most prominent 
of which is {\sc gurobi} \cite{gurobi} that is the chosen solver to be
adopted in the PIP algorithm.  Supported by a local optimality theory, 
our encouraging numerical results demonstrate that 
the computed solutions by the PIP method are superior to those obtained by an 
alternating algorithm, among other comparisons.
We summarize the main contributions of this paper as follows:\vspace{0.05in}\\
\noindent $\bullet$ We introduce and systematically study a mixed-norm clustering model based on an integer programming approach, which, as mentioned, includes the classical models of K-means and K-medians as special cases. The independent choices of norms for cluster assignments and centroid updates offer flexibility in realistic applications and can achieve better clustering performance on noisy data.\vspace{0.05in}\\ 
$\bullet$ We develop a unified mixed IP-based method in the spirit of PIP to address the computational challenges for solving \eqref{eq:pq} at scale. Our
implementation focuses on the cases whose restricted subproblems can be solved
effectively by the state-of-the-art solver {\sc Gurobi}, while the
algorithmic framework itself is not tied to a particular solver.\vspace{0.05in}\\
$\bullet$ We derive necessary and sufficient conditions for local optimality of
\eqref{eq:pq}. More importantly, under some practical assumptions, we show that the solutions computed by the PIP method are local minimizers of the problem \eqref{eq:pq}. \vspace{0.05in}\\
% This is a stronger property than the Nash property of the solutions obtained 
% by the alternating procedure (for $p=q$).
$\bullet$ We conduct an extensive comparative computational study of the proposed PIP method with the alternating method for solving \eqref{eq:pq}.\vspace{0.05in} \\ The numerical results demonstrate the superior performance of the PIP method. Warm-started from the computed solution of its non-IP counterpart, the
PIP method is guaranteed to return a result no worse than the given solution. In particular, when the
former is trapped in a poor iterate that is not a fixed point of the algorithmic map, the IP’s combinatorial search escapes the point and produces strict improvement. This guarantee holds regardless of
the initialization of the non-IP method and is confirmed empirically across all benchmark settings.

The rest of the paper is organized as follows. We introduce the
principal ingredients of the PIP method in 
Section~\ref{sec:theory} for the mixed-norm $\ell_{p,q}$-problem
(\ref{eq:pq}), present a brief theory of the method, 
and establish a sharpened result for the matched-norm case
with $p = q$.  We design a practical implementation of the 
PIP algorithm for the case $q = 1$ in Section~\ref{sec: PIP-q=1}. 
Extensive numerical results are presented in 
Section~\ref{sec: numerical-results}. The paper ends with
a conclusion and some open research questions

\vspace{-0.1in}

\section{Partial IP Formulation and Optimality Theory}
\label{sec:theory}

This section develops the partial IP formulation and optimality 
theory for solving the mixed-norm clustering model \eqref{eq:pq} 
via the PIP method. Particularly, we will formulate an equivalent 
mixed-integer formulation to \eqref{eq:pq} and the corresponding 
restricted subproblems via the partial IP formulation;
more importantly, we will establish that a fixed point to the 
minimization of the partial IP problem is a locally optimal solution 
to \eqref{eq:pq}. The analysis distinguishes two notions of local
optimality.  An ordinary local minimizer is local in the product 
space of centers and binary assignments; since the binary points 
are isolated, this notion
effectively fixes the assignment near the reference point.  A
{\sl strong center-local minimizer} is optimal among all feasible
assignments whenever the centers are sufficiently close.  
The latter notion is stronger and is the principal guarantee 
obtained from a current iterate that itself is a global solution 
of a self-defined PIP subproblem. We call the latter a 
{\sl self-global} property.

The starting point of the IP approach for solving the
mixed-norm problem (\ref{eq:pq}) is to restrict the centers $c^k$ 
for which there is a constant $M > 0$ that bounds the norms 
$\max\{ \, \| \, \xi^s - c^k \|_p^p, \, \| \, \xi^s - c^k \, \|_q 
\, \}$ for all pairs $(s,k) \in [N] \times [K]$. We denote by 
${\cal C}$ a convex set of bounded centers (by the scalar $M$) 
that becomes part of the constraints in the following IP
formulation (\ref{eq:MIP}).  One example of the set ${\cal C}$ is
\[
{\cal C} \, = \, \displaystyle{
\prod_{k=1}^K 
} \ \displaystyle{
\prod_{j=1}^d
} \ \left[ \, \underline{\gamma}^k_j, \, \overline{\gamma}^k_j 
\, \right], 
\epc \mbox{for some constants $-\infty < \underline{\gamma}^k_j 
\, < \, \overline{\gamma}^k_j < +\infty$.}
\]
The componentwise or pair-dependent bounds may be used
computationally. In what follows,
we use a single big-M bound for simplicity of notation, in terms 
of which, we may linearize the 
product $t_{sk} \triangleq z_{sk} \, \| \, \xi^s - c^k \, \|_p^p$ 
in \eqref{eq:pq}
for each $(s, k) \in [N] \times [K]$.  This linearization is due 
to the equivalence:
\[ 
t_{sk} = z_{sk} \, \| \, \xi^s - c^k \, \|_p^p \ \Longleftrightarrow 
\ 0 \, \geq \, t_{sk} - \| \, \xi^s - c^k \, \|_p^p 
\, \geq \, -M \, ( 1 - z_{sk} ) \ \mbox{ and } \ 
M \, z_{sk} \, \geq \, t_{sk} \, \geq \, 0.
\]
Since $t_{sk}$ is being minimized, the right-hand conditions 
simplify to
\[
 t_{sk} - \| \, \xi^s - c^k \, \|_p^p \, \geq \, -M \, ( 1 - z_{sk} ) 
 \ \mbox{ and } \ t_{sk} \, \geq \, 0.
\]
With these auxiliary variables $t_{sk}$, we obtain 
a reformulation of (\ref{eq:pq}) as a mixed-integer 
$\ell_{p,q}$ norm-constrained program whose feasible set we 
denote ${\cal F}$:
\begin{equation}\label{eq:MIP}
% (\mathbf{P}_{p,1}^{\, \prime}) \left\{ \, 
\begin{array}{ll}
\underset{\boldsymbol{c} \, \in \, {\cal C}, \, \boldsymbol{z}, 
\, \boldsymbol{t}}{
\mbox{\bf minimize}} & \displaystyle{
\frac{1}{N}
} \, \displaystyle{
\sum_{s=1}^N
} \, \displaystyle{
\sum_{k=1}^K
} \, t_{sk} \\ [0.15in]
\mbox{\bf subject to} & \displaystyle{
\sum_{k=1}^K
} \, z_{sk} = 1, \hspace{0.25in} \forall \, s \, \in \, [ \, N \, ] 
\\ [0.2in]
\mbox{\bf and} & \left\{ \, \begin{array}{l}
\mbox{for all $(s,k) \, \in \, 
[ \, N \, ] \times [ \, K \, ]$:} \\ [0.1in]
\underbrace{t_{sk} \, \geq \, \| \, \xi^s - c^k \, \|_p^p - M(1 - z_{sk})}_{
\mbox{convex constraint; quadratic when $p = 2$}};
\epc t_{sk} \, \geq \, 0 \\ [0.35in]
\underbrace{\| \, \xi^s - c^k \, \|_q \, \leq \, 
\| \, \xi^s - c^{k^{\prime}} \, \|_q + M \, ( 1 - z_{sk})}_{
% \displaystyle{
% \sum_{j=1}^d
% } \, | \, \xi^s_j - c^k_j \, | - \displaystyle{
% \sum_{j=1}^d
% } \, | \, \xi^s_j - c^{k^{\prime}}_j \, | \, \leq \, M \, (1 - z_{sk})}_{
\mbox{nonconvex constraint; piecewise affine when $q = 1$}},
\quad \forall \, k^{\prime} \, \neq \, k \\ [0.35in]
z_{sk} \, \in \, \{ \, 0,1 \, \}.
\end{array} \right.
\end{array} 
\end{equation}
The equivalence of \eqref{eq:pq} and~\eqref{eq:MIP} follows from a
direct case analysis on $z_{sk} \in \{0,1\}$: when $z_{sk} = 1$, the
big-$M$ term vanishes and forces $t_{sk} = \|c^k - \xi^s\|_p^p$ at
optimality, while the assignment constraint reduces to the $\ell_q$ 
nearest-center
membership; when $z_{sk} = 0$, both constraints are inactive and 
we may choose $t_{sk} = 0$ which contributes nothing to the objective.

\gap

\noindent {\bf The challenges for solving (\ref{eq:MIP}):}
The constraints involving the $\ell_p$-norm are easier to handle due to convexity. Particularly, for $p = 1$, the convex constraints are essentially of the
linear kind, thus easy to handle; for $p = 2$, the convex quadratic
constraint in (\ref{eq:MIP}) is within the comfort reach of an IP solver 
like {\sc gurobi}. But it may still pose a challenge for problems with 
large $N$ ($K$ is typically of modest size).
A core challenge comes from the nonconvex constraints involving the $\ell_q$-norm. When $q = 2$, the nonconvex constraints in
(\ref{eq:MIP}) make it difficult for existing IP methods/solvers to 
handle effectively, even for low-dimensional problems.
When $q = 1$, the nonconvex constraints are piecewise affine.
While there are specialized IP methods
\cite{VielmaAhmedNemhauser10} designed to handle such constraints,
subsequently, we employ a convexification (in this case, linearization)
technique derived from difference-of-convex programming 
\cite{PangRazaAlvarado17}. 
Nevertheless, due to the co-existence of the convex and nonconvex
constraints and the binary variables, the practical limitation of
solving (\ref{eq:MIP}) by a straightforward application of an IP
solver is fully evident.  The situation becomes somewhat friendlier
in the special case of $p = q$ for which there is the 
equivalent formulation of (\ref{eq:p=q}):
\begin{equation}\label{eq:MIP p=q}
\begin{array}{ll}
\underset{\boldsymbol{c} \, \in \, {\cal C}; \, \boldsymbol{z}; 
\, \boldsymbol{t}}{\mbox{\bf minimize}} & \displaystyle{
\frac{1}{N}
} \, \displaystyle{
\sum_{s=1}^N
} \, \displaystyle{
\sum_{k=1}^K
} \ t_{sk} \\ [0.15in]
\mbox{\bf subject to} & \displaystyle{
\sum_{k=1}^K
} \, z_{sk} = 1, \hspace{0.25in} \forall \, s \, \in \, [ \, N \, ]
\\ [0.2in]
\mbox{\bf and} & \left\{ \begin{array}{l}
\mbox{for all $(s,k) \, \in \, [ \, N \, ] \times [ \, K \, ]$:} \\ [0.1in]
t_{sk} \, \geq \, \| \, \xi^s - c^k \, \|_p^p 
- M(1 - z_{sk}); \epc t_{sk} \, \geq \, 0 \\ [0.1in]
z_{sk} \, \in \, \{ \, 0,1 \, \},
\end{array} \right.
\end{array} 
\end{equation}
which is a convex mixed-binary program that is in principle directly
amenable to computational solution by an IP solver. Nevertheless,
as aforementioned, unless $p = 1$, the convex constraints could
remain a bottleneck for
the straightforward application of an IP solver to (\ref{eq:MIP p=q}) 
for large $N$.    

In summary, there
are many hurdles in solving the MIP (\ref{eq:MIP}) in a straightforward
way, due to nonconvexity, nonlinearity, and potentially
large scale of the problem.  This motivates the design of effective schemes to
alleviate the computational challenges in solving \eqref{eq:MIP}. One such scheme is
the progressive IP method that is based on solving partial IP problems
obtained by fixing some of the binary variables.  
% In what follows, we present the method for arbitrary pairs $(p,q)$ 
% but specialize it for practical implementation to cases where the 
% subproblems can be efficiently solved.  

\subsection{Partial IP models based on the softmax partition}

To reduce the computational burden of solving~\eqref{eq:MIP} with 
the full set of $NK$ binary variables $z_{sk}$ and $NK(K-1)$ constraints, we introduce a reduction 
of these variables as well as constraints by adaptively fixing 
some of them corresponding to a
given pair $\boldsymbol{\bar{y}} \triangleq 
\left( \boldsymbol{\bar{z}} \triangleq 
\{ \bar{z}_{sk} \}_{(s,k)=1}^{(N,K)}; \, 
\boldsymbol{\bar{c}} \triangleq \{ \bar{c}^k_j \}_{(j,k)=1}^{(d,K)} 
\, \right)$,
which along with the auxiliary tuple $\boldsymbol{\bar{t}} \triangleq 
\left( \{ \bar{t}_{sk} \}_{(s,k)=1}^{(N,K)} \right)$ with 
$\bar{t}_{sk} = \bar{z}_{sk} \, \| \, \xi^s - c^k \, \|_p^p$, is
feasible to (\ref{eq:MIP}). Consequently, the restricted subproblems 
leave the un-fixed binary 
variables as free variables to be solved.  
Occasionally, we will omit the $t_{sk}$-variables for simplicity when
they are clear from the context.  Informally, the rationale behind this fixing strategy is that most assignment variables (i.e., $z_{sk}$)  
are unambiguous if the current solution pair on hand is reasonably good. Therefore, adaptively fixing a subset of binary variables is sensible given that we design an effective correction mechanism to update the fixing set that corresponds to the ambiguous data points. The strict nearest and strict 
non-nearest index sets at $\bar{\boldsymbol c}$ are, respectively,
\begin{align}
 {\cal J}_{<}(\bar{\boldsymbol c})
 &\coloneqq
 \left\{(s,k) \in [ \, N \, ] \times [ \, K \, ] \, \mid \,
 \lVert\xi^s-\bar c^k\rVert_q
 <\min_{k'\ne k}\lVert\xi^s-\bar c^{k'}\rVert_q
 \right\}, \label{eq:Jless}\\
 {\cal J}_{>}(\bar{\boldsymbol c})
 &\coloneqq
 \left\{(s,k) \in [ \, N \, ] \times [ \, K \, ] \, \mid \,
 \lVert\xi^s-\bar c^k\rVert_q
 >\min_{k'\ne k}\lVert\xi^s-\bar c^{k'}\rVert_q
 \right\}. \label{eq:Jgreater}
\end{align}
The feasibility and the assignment equation imply that
\begin{equation}\label{eq:implications}
(s,k)\in{\cal J}_<(\bar{\boldsymbol c}) \ \Longrightarrow \ 
\bar z_{sk}=1, \qquad
(s,k)\in{\cal J}_>(\bar{\boldsymbol c}) \ \Longrightarrow \ 
\bar z_{sk}=0.
\end{equation}
For any $s$, there exists at most one index $k\in [K]$ such that $(s,k) \in \mathcal J_{<}(\bar c)$. Let ${\cal J}_1$, ${\cal J}_0$ be any two disjoint
index subsets of $[ \, N \, ] \times [ \, K \, ]$ such that
\begin{equation} \label{eq:index set inclusions}
{\cal J}_1 \, \subseteq \, {\cal J}_<(\boldsymbol{\bar{c}}) 
\epc \mbox{and} \epc
{\cal J}_0 \, \subseteq \, {\cal J}_>(\boldsymbol{\bar{c}}).
\end{equation}
Denote ${\cal J}_c \triangleq [ \, N \, ] \times \, [ \, K \, ] 
\, \setminus \,
\left( \, {\cal J}_1 \, \cup \, {\cal J}_0 \, \right).$ 
Therefore, we know that 
\[\left\{ \, (s,k)\, \mid \, 
\| \, \xi^s - \bar{c}^k \, \|_q \, = \, \displaystyle{
\min_{k^{\prime} \neq k}
} \, \| \, \xi^s - \bar{c}^{k^{\prime}} \, \|_q \, \right\} \subseteq {\cal J}_c.\]
The condition in the left-hand side of the above inclusion 
indicates that such a point $\xi^s$ achieves the minimum distance 
in the $\ell_q$-norm to multiple centroids and thus has an ambiguous 
assignment given $\bar{\bf c}$. Inspired by the implications in 
\eqref{eq:implications}, we construct 
a partial IP  problem of \eqref{eq:MIP} by fixing  the binary
variables in ${\cal J}_0$ and ${\cal J}_1$ as follows:
% \begin{equation}\label{eq:reduced MIP}
% % (\mathbf{P}_{p,1}^{\, \prime}) \left\{ \, 
% \begin{array}{ll}
% \underset{\boldsymbol{c} \, \in \, {\cal C}; \, \boldsymbol{z}; 
% \, \boldsymbol{t}}{\mbox{\bf minimize}} & \displaystyle{
% \frac{1}{N}
% } \, \displaystyle{
% \sum_{(s,k) \in {\cal J}_c}
% } \, t_{sk} + \displaystyle{
% \frac{1}{N}
% } \, \displaystyle{
% \sum_{(s,k) \in {\cal J}_1}
% } \ \| \, \xi^s - c^k \, \|_p^p \\ [0.2in]
% & \left( \, = \, \displaystyle{
% \frac{1}{N}
% } \, \displaystyle{
% \sum_{s=1}^N
% } \, \displaystyle{
% \sum_{k=1}^K
% } \, z_{sk} \, \| \, \xi^s - c^k \, \|_p^p \, = \, \displaystyle{
% \frac{1}{N}
% } \, \displaystyle{
% \sum_{s=1}^N
% } \, \displaystyle{
% \sum_{k=1}^K
% } \, t_{sk} \, \right) \\ [0.2in]
% \mbox{\bf subject to} & \displaystyle{
% \sum_{k=1}^K
% } \, z_{sk} = 1, \hspace{0.25in} \forall \, s \, \in \, [ \, N \, ] 
% \\ [0.2in]
% & \left\{ \begin{array}{l}
% \mbox{for all $(s,k) \, \in \, 
% [ \, N \, ] \, \times \, [ \, K \, ]$:} \\ [0.1in]
% t_{sk} \, \geq \, \| \, \xi^s - c^k \, \|_p^p - M(1 - z_{sk});
% \epc t_{sk} \, \geq \, 0 \\ [0.1in]
% \| \, \xi^s - c^k \, \|_q - \| \, \xi^s - c^{k^{\prime}} \, \|_q \, 
% \leq \, M \, (1 - z_{sk}), \quad \forall \, k^{\prime} \, \neq \, k 
% \end{array} \right\} \\ [0.4in]
% & \left\{ \begin{array}{l}
% z_{sk} \, \in \, \{ \, 0,1 \, \}, \epc (s,k) \, \in \, {\cal J}_c
% \\ [0.1in]
% \underbrace{z_{sk} \, = \, 1, \ (s,k) \, \in \, {\cal J}_1; 
% \ \mbox{ \bf and } \
% z_{sk} \, = \, 0, \ (s,k) \, \in \, {\cal J}_0}_{\mbox{restrictions
% occur here}}
% \end{array} \right\}
% \end{array} 
% \end{equation}
\begin{equation}\label{eq:reduced MIP}
\begin{array}{cll}
\displaystyle{
\operatornamewithlimits{\mbox{\bf minimize}}_{
\boldsymbol c\in{\cal C},\,\boldsymbol z,\,
\{ t_{sk}:(s,k)\in{\cal J}_c \}}
} \quad & \displaystyle{
\frac{1}{N}
} \, \left[ \, \displaystyle{
\sum_{(s,k)\in{\cal J}_c}t_{sk}
} + \displaystyle{
\sum_{(s,k)\in{\cal J}_1}
} \, \lVert\xi^s-c^k\rVert_p^p \, \right] & \\ [0.2in]
\displaystyle{
\operatornamewithlimits{\mbox{\bf subject to}}
} \quad & \displaystyle{
\sum_{k=1}^K
} \, z_{sk} \, = \, 1, & s \in [N] \\ [0.1in]
& t_{sk} \, \geq \, \lVert \, \xi^s-c^k \, \rVert_p^p -
M (1-z_{sk}), \quad t_{sk} \, \geq \, 0, & (s,k) \in {\cal J}_c 
\\ [0.1in]
& \lVert \, \xi^s-c^k \, \rVert_q - \lVert \, \xi^s-c^{k^{\prime}}
\, \rVert_q \, \leq \, M (1-z_{sk}), & (s,k) \in {\cal J}_c, \ 
k^{\prime} \neq k \\ [0.1in]
& z_{sk} \, \in \, \{0,1\}, & (s,k) \in {\cal J}_c \\ [0.1in]
& \lVert \, \xi^s-c^k \, \rVert_q \, \leq \, 
\lVert \, \xi^s-c^{k^{\prime}} \, \rVert_q,
& (s,k) \in {\cal J}_1, \ k^{\prime} \neq k \\ [0.1in]
& z_{sk} \, = \, 1, & (s,k) \in {\cal J}_1 \\ [0.1in]
\mbox{\bf and} & z_{sk} \, = \, 0, & (s,k) \in {\cal J}_0.
\end{array}
\end{equation}
It is clear that (\ref{eq:reduced MIP}) is a restriction of
(\ref{eq:MIP}) and contain the incumbent tuple
$(\bar{\boldsymbol c},\bar{\boldsymbol z},\bar{\boldsymbol t})$. It is worth mentioning that this feasibility property is essential, which guarantees that the optimal solution to \eqref{eq:reduced MIP} will not be worse than 
$(\bar{\boldsymbol c},\bar{\boldsymbol z},\bar{\boldsymbol t})$. 
The restrictions formalize the trust we have in the current candidate
pair $( \boldsymbol{\bar{c}},\boldsymbol{\bar{z}} )$.  Namely, based on 
this pair, we judge that
for any $(s,k) \in \mathcal{J}_1$, the sample $s$ is deemed likely to
belong to the cluster $k$, thus we fix the assignment $z_{sk} = 1$;
similarly, for any $(s,k) \in \mathcal{J}_0$, the sample $s$ is 
deemed unlikely to
belong to the cluster $k$, thus we fix the assignment $z_{sk} = 0$.
% The simplex constraint $\displaystyle \sum_{k=1}^K z_{sk} = 1$ then forces 
% $z_{sk^{\prime}} = 0$ for all $k^{\prime} \neq k$, so fixing a single pair 
% in $\mathcal{J}^+$ implicitly determines all remaining assignments for 
% data point $s$.    
Binary variables are retained only for ambiguous pairs 
$(s,k) \in \mathcal{J}_{\, c}$  by the current information. 
  
A reasonable way to construct the index sets
${\cal J}_{\, 0}$ and ${\cal J}_1$ satisfying \eqref{eq:index set inclusions} is via the softmax-based probabilities
for a given scalar $\tau > 0$:
\begin{equation}\label{eq:softmax}
\rho_{sk}(\tau) \, \triangleq \, \displaystyle{
\frac{\exp\!\left(-\|\xi^s - \bar{c}^k\|_q \,/\, \tau\right)}
{\displaystyle\sum_{k^{\prime}=1}^{K}
\exp\!\left(-\|\xi^s - \bar{c}^{k^{\prime}}\|_q \,/\, \tau\right)}
}, \qquad \forall\, (s,k) \, \in \, [ \, N \, ] \, \times \, 
[ \, K \, ].
\end{equation}
Using confidence threshold $\displaystyle\frac{1}{2}$ and minimum 
viability threshold $\displaystyle \frac{1}{K}$,
we define the confident sets
\begin{equation} \label{eq:J0}
\mathcal{J}^{>}_{\tau}(\bar{\boldsymbol c}) \, \triangleq \,
\left\{ \, (s,k)\, \mid \, \rho_{sk}(\tau) \, > \, \thalf 
\, \right\} \ \mbox{ and } \
% \label{eq:Jplus} \\[4pt]
\mathcal{J}^{\, <}_{\tau}(\bar{\boldsymbol c}) \, \triangleq \,
\left\{ \, (s,k)\, \mid \, \rho_{sk}(\tau) \, < \, \displaystyle{
\frac{1}{K}
} 
\, \right\}.
\end{equation}
The proposition below shows that these two sets can play the role 
of $\mathcal{J}_1$ and $\mathcal{J}_0$, respectively 
( i.e., satisfying 
the inclusion requirements in (\ref{eq:index set inclusions})). 

\begin{proposition}\label{pr:feasibility} \rm
Let $( \boldsymbol{\bar{c}},\boldsymbol{\bar{z}},
\boldsymbol{\bar{t}} )$ 
be a feasible triplet of (\ref{eq:MIP}). For any $\tau > 0$, 
it holds that
$\mathcal{J}^{>}_\tau(\bar{\boldsymbol c}) \subseteq 
{\cal J}_<(\boldsymbol{\bar{c}})$ and
$\mathcal{J}^<_\tau(\bar{\boldsymbol c}) \subseteq 
{\cal J}_>(\boldsymbol{\bar{c}})$. For $K > 2$, we also have
% can replace $\mathcal{J}^{>}_\tau(\bar{\boldsymbol c})$ by 
$\mathcal{J}^{\geq}_{\tau}(\bar{\boldsymbol c}) \, \triangleq \,
\left\{ \, (s,k) \, \mid \, \rho_{sk}(\tau) \, \geq \, 
\thalf \, \right\} \subseteq {\cal J}_{<}(\boldsymbol{\bar{c}})$. 
\end{proposition}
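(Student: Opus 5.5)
The argument is elementary. It rests on the fact that $t\mapsto \exp(-t/\tau)$ is strictly decreasing and positive for each fixed $\tau>0$. Fix $s\in[N]$ and abbreviate $d_k \triangleq \|\xi^s-\bar c^k\|_q$ and $w_k \triangleq \exp(-d_k/\tau)>0$ for $k\in[K]$. Then $\rho_{sk}(\tau)=w_k/\sum_{k'=1}^K w_{k'}$. Because $t\mapsto \exp(-t/\tau)$ is strictly decreasing, $w_k>w_{k'}$ if and only if $d_k<d_{k'}$. All three inclusions therefore reduce to comparing $w_k$ with the quantities $S_k\triangleq\sum_{k'\neq k}w_{k'}$ and $\max_{k'\neq k}w_{k'}$. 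The feasibility of $(\bar{\boldsymbol c},\bar{\boldsymbol z},\bar{\boldsymbol t})$ plays no real role here: the sets ${\cal J}_<$ and ${\cal J}_>$ depend only on $\bar{\boldsymbol c}$.

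\emph{First inclusion.} The condition $\rho_{sk}(\tau)>\thalf$ is equivalent to $w_k>S_k$. Since every $w_{k'}$ is positive, $S_k\ge\max_{k'\neq k}w_{k'}$. Hence $w_k>w_{k'}$ for all $k'\neq k$, which gives $d_k<\min_{k'\neq k}d_{k'}$, i.e.\ $(s,k)\in{\cal J}_<(\bar{\boldsymbol c})$.

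\emph{Second inclusion.} The condition $\rho_{sk}(\tau)<1/K$ is equivalent to $Kw_k<w_k+S_k$, i.e.\ $w_k<S_k/(K-1)$. The right-hand side is the average of the $K-1$ numbers $w_{k'}$ with $k'\neq k$, so it is at most their maximum. Thus $w_k<\max_{k'\neq k}w_{k'}$. Translating back through the monotonicity gives $d_k>\min_{k'\neq k}d_{k'}$, i.e.\ $(s,k)\in{\cal J}_>(\bar{\boldsymbol c})$.

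\emph{Third inclusion ($K>2$).} The condition $\rho_{sk}(\tau)\ge\thalf$ gives only $w_k\ge S_k$. When $K>2$, however, $S_k$ is a sum of at least two strictly positive terms, so $S_k>\max_{k'\neq k}w_{k'}$ strictly. Hence $w_k>w_{k'}$ for all $k'\neq k$, and we conclude as in the first inclusion.

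The only delicate point, and the main thing to get right, is this strictness in the third inclusion. It also explains why the hypothesis $K>2$ is needed. For $K=2$, a tie $d_1=d_2$ gives $\rho_{s1}=\rho_{s2}=\thalf$, so the non-strict threshold would place an ambiguous pair into ${\cal J}_1$. Pointing out this two-cluster tie shows that the hypothesis cannot be dropped.
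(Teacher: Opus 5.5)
Your proof is correct and is essentially the paper's argument: you compare the softmax numerator $w_k$ with the sum (for the $\tfrac12$ thresholds) or the average (for the $1/K$ threshold) of the other terms, use strict monotonicity of $t\mapsto\exp(-t/\tau)$, and get strictness for $K>2$ because $S_k$ then has at least two positive terms. Your explicit handling of the first inclusion, which the paper only says follows by ``a simple modification,'' and your $K=2$ tie example showing the hypothesis cannot be dropped are welcome additions.
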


\begin{proof}  We prove the last inclusion
$\mathcal{J}^{\geq}_{\tau}(\bar{\boldsymbol c}) \subseteq 
{\cal J}_{<}(\boldsymbol{\bar{c}})$; a simple modification of
the proof also yields the first one 
$\mathcal{J}^>_{\tau}(\bar{\boldsymbol c}) \subseteq 
{\cal J}_{<}(\boldsymbol{\bar{c}})$.  
Suppose $K > 2$ and $\rho_{sk}(\tau) \geq 1/2$.  
By definition~\eqref{eq:softmax},
\begin{equation*}
    \exp\!\left(\frac{-\|\xi^s-\bar{c}^k\|_q}{\tau}\right) 
    \, \geq \, 
    \frac{1}{2} \, \sum_{k^{\prime}=1}^{K}
    \exp\!\left(\frac{-\|\xi^s-\bar{c}^{k^{\prime}}\|_q}{\tau}
    \right).
\end{equation*}
Subtracting $\displaystyle{ 
\frac{1}{2}
} \, \exp\left( \frac{-\|\xi^s-\bar{c}^k\|_q}{\tau} \right)$ from 
both sides gives
\begin{equation*}
    \exp\!\left(-\frac{\|\xi^s-\bar{c}^k\|_q}{\tau}\right)
    \, \geq \,
    \sum_{k^{\prime} \neq k}
    \exp\!\left(-\frac{\|\xi^s-\bar{c}^{k^{\prime}}\|_q}{\tau}\right)
    \, > \, 
    \max_{k^{\prime} \neq k}
    \exp\!\left(-\frac{\|\xi^s-\bar{c}^{k^{\prime}} \|_q}{\tau}\right).
\end{equation*}
which clearly yields
% Taking the natural logarithm and multiplying by $-\tau<0$ reverses the
% inequality, yielding
% $\|\xi^s-\bar{c}^k\|_1 < \min_{k'\neq k}\|\xi^s-\bar{c}^{k'}\|_1$,
% or equivalently
\begin{equation*}
\displaystyle{
\min_{k^{\prime} \neq k}
} \, \| \, \xi^s-\bar{c}^{k^{\prime}} \|_q - 
\| \, \xi^s-\bar{c}^k \, \|_q \, > \, 0.
\end{equation*}
Thus, $(s,k) \in {\cal J}_<(\boldsymbol{\bar{c}})$.  
Similarly, to prove
$\mathcal{J}^<_\tau(\bar{\boldsymbol c}) \subseteq 
{\cal J}_>(\boldsymbol{\bar{c}})$, suppose that 
$\rho_{sk}(\tau) < \displaystyle{
\frac{1}{K}
}$.  Since $\displaystyle{
\sum_{k^{\prime}=1}^K
} \, \rho_{sk^{\prime}}(\tau) = 1$; we deduce
$\displaystyle{
\max_{k^{\prime} \neq k}
} \, \rho_{sk^{\prime}}(\tau) \geq \displaystyle{
\frac{1}{K}
} > \rho_{sk}(\tau)$, or equivalently, 
$\| \, \xi^s - \bar{c}^k \ \|_q \, > \, \displaystyle{
\min_{k^{\prime} \neq k}
} \, \| \, \xi^s - \bar{c}^{k^{\prime}} \, ]\|_q$.  Thus 
$(s,k) \in {\cal J}_>(\boldsymbol{\bar{c}})$.
\end{proof}

Next, we establish a strong local connection between the 
partial IP (\ref{eq:reduced MIP}) and the original clustering
problem (\ref{eq:pq}). We call
$(\bar{\boldsymbol c},\bar{\boldsymbol z})$ a 
{\sl strong center-local minimizer}
if a neighborhood ${\cal N}$ of $\bar{\boldsymbol c}$ 
exists such
that
\[
 F_{p, q}(\bar{\boldsymbol c},\bar{\boldsymbol z})
 \leq F_{p, q}(\boldsymbol c,\boldsymbol z)
\]
for every feasible $(\boldsymbol c,\boldsymbol z)$ of~\eqref{eq:pq} with
$\boldsymbol c\in{\cal N}$, with no proximity restriction imposed on
$\boldsymbol z$.
% in terms of a fixed-point property of a feasible pair 
% $( \boldsymbol{\bar{c}},\boldsymbol{\bar{z}} )$ of (\ref{eq:pq})
% that defines the reduced IP (\ref{eq:reduced MIP}). 

\begin{theorem} \label{th:fixed-point localmin} \rm
Let $(\boldsymbol{\bar{c}},\boldsymbol{\bar{z}})$ 
be a feasible pair for (\ref{eq:pq}) and 
$\boldsymbol{\bar{t}} \in \mathbb{R}^{NK}$ with 
$\bar t_{sk}=\bar z_{sk}\lVert\xi^s-\bar c^k\rVert_p^p$.  
Among the following three statements, it holds that 
(A) $\Rightarrow$ (B) $\Rightarrow$ (C):

\noindent (A) the triplet
$( \boldsymbol{\bar{c}},\boldsymbol{\bar{z}},\boldsymbol{\bar{t}} )$ 
is (globally) optimal for (\ref{eq:reduced MIP}) for some pair
of disjoint index sets ${\cal J}_1$ and ${\cal J}_0$ 
% defined by the pair 
% $( \boldsymbol{\bar{c}},\boldsymbol{\bar{z}} )$
satisfying the inclusions (\ref{eq:index set inclusions});

% \gap

\noindent (B) the pair
$( \boldsymbol{\bar{c}},\boldsymbol{\bar{z}} )$ is a strong center 
local minimizer of (\ref{eq:pq}) for some neighborhood 
$\mathcal{N}$ of $\boldsymbol{\bar{c}}$;

% \gap

\noindent (C) the pair 
$( \boldsymbol{\bar{c}},\boldsymbol{\bar{z}} )$ 
is an optimal solution of (\ref{eq:pq}) over all 
$( \boldsymbol{c},\boldsymbol{z} )$ sufficiently close to 
$( \boldsymbol{\bar{c}},\boldsymbol{\bar{z}} )$.
\end{theorem}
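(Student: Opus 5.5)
The plan is to prove the two implications separately. (B) $\Rightarrow$ (C) is essentially immediate, and the main work is (A) $\Rightarrow$ (B). The tool for (A) $\Rightarrow$ (B) is continuity of the $\ell_q$ distances: the strict inequalities that define ${\cal J}_<(\bar{\boldsymbol c})$ and ${\cal J}_>(\bar{\boldsymbol c})$ persist for centers near $\bar{\boldsymbol c}$.

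\textbf{Choice of neighborhood.} For (A) $\Rightarrow$ (B), I would first pick $\mathcal N$. The sets ${\cal J}_<(\bar{\boldsymbol c})$ and ${\cal J}_>(\bar{\boldsymbol c})$ are defined by finitely many strict inequalities between continuous functions of $\boldsymbol c$, namely $\|\xi^s-c^k\|_q - \min_{k'\ne k}\|\xi^s-c^{k'}\|_q$. Hence there is a neighborhood $\mathcal N$ of $\bar{\boldsymbol c}$ with
\[
{\cal J}_<(\bar{\boldsymbol c})\subseteq {\cal J}_<(\boldsymbol c)
\quad\text{and}\quad
{\cal J}_>(\bar{\boldsymbol c})\subseteq {\cal J}_>(\boldsymbol c)
\quad\text{for all } \boldsymbol c\in\mathcal N.
\]
I would shrink $\mathcal N$ further, if needed, so that it fits the big-$M$ bound. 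The bounding set ${\cal C}$ is an issue here: the problem (\ref{eq:pq}) does not impose $\boldsymbol c\in{\cal C}$. So I would intersect $\mathcal N$ with a ball around $\bar{\boldsymbol c}$ small enough that the norms $\max\{\|\xi^s-c^k\|_p^p,\|\xi^s-c^k\|_q\}$ stay below $M$. I would also assume, as the big-$M$ construction implicitly does, that $\bar{\boldsymbol c}$ lies in the interior of ${\cal C}$, or simply intersect $\mathcal N$ with ${\cal C}$ and note this explicitly. I expect this interplay between ${\cal C}$/$M$ and the neighborhood to be the main technical point to state carefully.

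\textbf{Lifting a nearby feasible pair.} Next, take any $(\boldsymbol c,\boldsymbol z)$ feasible for (\ref{eq:pq}) with $\boldsymbol c\in\mathcal N$, and set $t_{sk}=z_{sk}\|\xi^s-c^k\|_p^p$. I would show that $(\boldsymbol c,\boldsymbol z,\boldsymbol t)$ is feasible for (\ref{eq:reduced MIP}), in four steps.
\begin{enumerate}
\item For $(s,k)\in{\cal J}_1\subseteq{\cal J}_<(\bar{\boldsymbol c})\subseteq{\cal J}_<(\boldsymbol c)$, the Heaviside constraint in (\ref{eq:pq}) together with $\sum_k z_{sk}=1$ forces $z_{sk}=1$, as in (\ref{eq:implications}). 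The strict nearest property then gives the ${\cal J}_1$ inequality constraints.
\item For $(s,k)\in{\cal J}_0\subseteq{\cal J}_>(\boldsymbol c)$, the Heaviside constraint forces $z_{sk}=0$.
\item For $(s,k)\in{\cal J}_c$, the big-$M$ constraints hold: when $z_{sk}=1$ they reduce to the Heaviside condition, and when $z_{sk}=0$ they are slack because of the bound $M$.
\item The objective of (\ref{eq:reduced MIP}) at this point equals $F_{p,q}(\boldsymbol c,\boldsymbol z)$, since $t_{sk}=0$ off the assignment and $z_{sk}=1$ on ${\cal J}_1$.
\end{enumerate}
Likewise, its value at $(\bar{\boldsymbol c},\bar{\boldsymbol z},\bar{\boldsymbol t})$ equals $F_{p,q}(\bar{\boldsymbol c},\bar{\boldsymbol z})$. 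Global optimality in (A) then yields $F_{p,q}(\bar{\boldsymbol c},\bar{\boldsymbol z})\le F_{p,q}(\boldsymbol c,\boldsymbol z)$, which is (B). No restriction on $\boldsymbol z$ was used.

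\textbf{(B) $\Rightarrow$ (C).} The set of pairs $(\boldsymbol c,\boldsymbol z)$ sufficiently close to $(\bar{\boldsymbol c},\bar{\boldsymbol z})$ is contained in $\{\boldsymbol c\in\mathcal N\}\times\{0,1\}^{NK}$. In fact, since binary points are isolated, closeness forces $\boldsymbol z=\bar{\boldsymbol z}$. The strong center-local inequality then applies directly, which gives (C).
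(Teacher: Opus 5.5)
Your proposal is correct and follows the paper's proof. You choose a neighborhood on which the strict $\ell_q$ inequalities at $\bar{\boldsymbol c}$ persist, lift any nearby feasible pair of (\ref{eq:pq}) to a feasible point of (\ref{eq:reduced MIP}) with the same objective value, and invoke global optimality, with (B) $\Rightarrow$ (C) immediate; your explicit care about $\boldsymbol c\in{\cal C}$ and the big-$M$ bound is a detail the paper leaves implicit (it tacitly treats (\ref{eq:pq}) as carrying the same bounded center set), not a departure from its argument.
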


\begin{proof}
The implication (B) $\Rightarrow$ (C) is immediate. Next, we show that (A) $\Rightarrow$ (B).  Let the neighborhood ${\cal N}$ be 
chosen such that for all $s \in [ N ]$, 
$(k,k^{\prime}) \in [ K ] \times [ K ]$, and $\boldsymbol{c} \in {\cal N}$,
\begin{equation} \label{eq:continuity}
\| \, \xi^s - \bar{c}^k \, \|_q \, < \, 
\| \, \xi^s - \bar{c}^{k^{\prime}} \, \|_q \, \ \Rightarrow \, 
\| \, \xi^s - c^k \, \|_q \, < \, \| \, \xi^s - c^{k^{\prime}} \, \|_q.
\end{equation}
Let $( \boldsymbol{c},\boldsymbol{z} ) \in {\cal N}
\times \{ \, 0,1 \, \}^{NK}$ be an arbitrary feasible pair to
(\ref{eq:pq}).  We need to show
\begin{equation} \label{eq:double sum obj}
F_{p, q}(\boldsymbol{c}, \boldsymbol{z}) \geq F_{p, q}(\boldsymbol{\bar{c}}, \boldsymbol{\bar{z}}).
\end{equation}
Given that
\[
\displaystyle{
\frac{1}{N}
} \, \displaystyle{
\sum_{(s,k) \in {\cal J}_c}
} \, t^{\, \prime}_{sk} + \displaystyle{
\frac{1}{N}
} \, \displaystyle{
\sum_{(s,k) \in {\cal J}_1}
} \, \| \, \xi^s - (c^{\prime})^{k} \, \|_p^p \, \geq \, 
\displaystyle{
\frac{1}{N}
} \, \displaystyle{
\sum_{(s,k) \in {\cal J}_c}
} \, \bar{t}_{sk} + \displaystyle{
\frac{1}{N}
} \, \displaystyle{
\sum_{(s,k) \in {\cal J}_1}
} \, \| \, \xi^s - \bar{c}^k \, \|_p^p
\]
for all $( \boldsymbol{c}^{\prime},\boldsymbol{z}^{\prime},
\boldsymbol{t}^{\, \prime} )$ feasible to (\ref{eq:reduced MIP}), 
it suffices to show that 
$( \boldsymbol{c},\boldsymbol{z},\boldsymbol{t} )$
is feasible to (\ref{eq:reduced MIP}), where 
$t_{sk} \triangleq z_{sk} \| \, \xi^s - c^k \|_p^p$.  In turn, 
it suffices to show that $z_{sk} = 1$ for all $(s,k) \in {\cal J}_1$ 
and $z_{sk} = 0$ for all $(s,k) \in {\cal J}_0$.  Let $(s,k) \in {\cal J}_1$.
Then $\| \, \xi^s - \bar{c}^k \, \|_q < \displaystyle{
\min_{k^{\, \prime} \neq k}
} \, \| \, \xi^s - \bar{c}^{k^{\, \prime}} \, \|_q$.  By (\ref{eq:continuity}),
it follows that $\| \, \xi^s - c^k \, \|_q < \displaystyle{
\min_{k^{\, \prime} \neq k}
} \, \| \, \xi^s - c^{k^{\, \prime}} \, \|_q$.  Hence, 
$z_{sk} = 1$ by (\ref{eq:implications}).  The claim for
$(s,k) \in {\cal J}_0$ follows a similar argument.
\end{proof}

% Next, we will show that 
Shown below, the converse (C) $\Rightarrow$ (A) 
% in Theorem \ref{th:fixed-point localmin} 
if an additional sample-wise unique center assumption holds.

\begin{assumption} \rm
\label{ass: sample-center-unique}
 For a feasible pair  $(\boldsymbol{\bar{c}},\boldsymbol{\bar{z}})$ for (\ref{eq:pq}) with $\bar t_{sk}=\bar z_{sk}\lVert\xi^s-\bar c^k\rVert_p^p$, the index set $\displaystyle{
\operatornamewithlimits{\mbox{\bf argmin}}_{1 \leq k^{\prime} \leq K}
} \, \| \, \xi^s - \bar{c}^{k^{\prime}} \, \|_q$ is a singleton 
for all $s \in [N]$.
\end{assumption}

\begin{theorem} \rm
Suppose that a feasible pair  $(\boldsymbol{\bar{c}},\boldsymbol{\bar{z}})$ for (\ref{eq:pq}) satisfies Assumption~\ref{ass: sample-center-unique}, Then the 
statements (A) and (C) in Theorem \ref{th:fixed-point localmin} 
are equivalent.
\end{theorem}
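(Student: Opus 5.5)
The direction (A) $\Rightarrow$ (C) is already contained in Theorem~\ref{th:fixed-point localmin}, so only (C) $\Rightarrow$ (A) needs proof. The plan is to exhibit one admissible pair of index sets for which (\ref{eq:reduced MIP}) collapses to a convex program in $\boldsymbol c$ alone. Local optimality of $\bar{\boldsymbol c}$ will then upgrade to global optimality by convexity.

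\emph{Choice of index sets.} Under Assumption~\ref{ass: sample-center-unique}, each $s$ has a unique index $k(s)$ attaining $\min_{k'}\|\xi^s-\bar c^{k'}\|_q$, and the minimum is strict. Hence $(s,k(s))\in{\cal J}_<(\bar{\boldsymbol c})$ and $(s,k)\in{\cal J}_>(\bar{\boldsymbol c})$ for every $k\neq k(s)$. I take ${\cal J}_1={\cal J}_<(\bar{\boldsymbol c})$ and ${\cal J}_0={\cal J}_>(\bar{\boldsymbol c})$; these are disjoint, satisfy (\ref{eq:index set inclusions}), and give ${\cal J}_c=\emptyset$. By (\ref{eq:implications}), $\bar z_{s k(s)}=1$ and all other $\bar z_{sk}=0$. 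So (\ref{eq:reduced MIP}) becomes: minimize the convex function
\[
\phi(\boldsymbol c)\triangleq\frac1N\sum_{s=1}^N\|\xi^s-c^{k(s)}\|_p^p
\]
over $\boldsymbol c\in{\cal C}$, subject to $\|\xi^s-c^{k(s)}\|_q\le\|\xi^s-c^{k'}\|_q$ for all $k'\neq k(s)$. Call this feasible set ${\cal S}$. The only difficulty is that ${\cal S}$ is nonconvex, so a local minimizer of $\phi$ on ${\cal S}$ need not be global.

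\emph{Key step.} I will show that $\bar{\boldsymbol c}$ minimizes $\phi$ over all of ${\cal C}$, not merely over ${\cal S}$. This suffices because ${\cal S}\subseteq{\cal C}$.
\begin{itemize}
\item The defining inequalities of ${\cal S}$ hold strictly at $\bar{\boldsymbol c}$. By continuity of the norms, there is a ball $B$ around $\bar{\boldsymbol c}$ on which they remain strict, as in (\ref{eq:continuity}).
\item For every $\boldsymbol c\in B\cap{\cal C}$, the pair $(\boldsymbol c,\bar{\boldsymbol z})$ is therefore feasible for (\ref{eq:pq}), and $F_{p,q}(\boldsymbol c,\bar{\boldsymbol z})=\phi(\boldsymbol c)$.
\item Shrinking $B$ if necessary, (C) yields $\phi(\bar{\boldsymbol c})\le\phi(\boldsymbol c)$ for all $\boldsymbol c\in B\cap{\cal C}$. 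Here $\boldsymbol z=\bar{\boldsymbol z}$ is close to $\bar{\boldsymbol z}$ trivially.
\item Thus $\bar{\boldsymbol c}$ is a local minimizer of the convex function $\phi$ over the convex set ${\cal C}$. By the standard argument (a local minimizer of a convex function on a convex set is global, via the segment from $\bar{\boldsymbol c}$ to any other point), $\phi(\bar{\boldsymbol c})\le\phi(\boldsymbol c)$ for all $\boldsymbol c\in{\cal C}$, and in particular on ${\cal S}$.
\end{itemize}

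Finally I will check that the objective value of $(\bar{\boldsymbol c},\bar{\boldsymbol z},\bar{\boldsymbol t})$ in (\ref{eq:reduced MIP}) equals $\phi(\bar{\boldsymbol c})$. It is feasible there, as already noted after (\ref{eq:reduced MIP}), so it is globally optimal and (A) holds.

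\emph{Expected obstacle.} The main point needing care is the nonconvexity of the $\ell_q$ ordering constraints. It is circumvented precisely because uniqueness makes them inactive near $\bar{\boldsymbol c}$, which lets us pass to the convex relaxation over ${\cal C}$. A minor subtlety is that $\bar{\boldsymbol c}$ may lie on the boundary of ${\cal C}$. This is harmless, since the local-to-global argument only uses convexity of ${\cal C}$ and never requires $\bar{\boldsymbol c}$ to be interior to ${\cal C}$.
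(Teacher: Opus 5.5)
Your proposal is correct and follows essentially the same route as the paper. You take ${\cal J}_1={\cal J}_<(\bar{\boldsymbol c})$ and ${\cal J}_0={\cal J}_>(\bar{\boldsymbol c})$, so that ${\cal J}_c=\emptyset$ and every feasible point of (\ref{eq:reduced MIP}) has $\boldsymbol z=\bar{\boldsymbol z}$; you then use the preserved strict nearest-center inequalities near $\bar{\boldsymbol c}$ and convexity of $\|\xi^s-\cdot\|_p^p$ along segments from $\bar{\boldsymbol c}$ to upgrade local to global optimality. Your intermediate claim that $\bar{\boldsymbol c}$ minimizes $\phi$ over all of ${\cal C}$ is exactly what the paper's segment argument proves implicitly, and you are slightly more careful than the paper in noting that the segment stays in the convex set ${\cal C}$.
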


\begin{proof}
\noindent It is sufficient to prove that (C) $\Rightarrow$ (A) 
under Assumption \ref{ass: sample-center-unique}.
Let ${\cal N}_c \times {\cal N}_z$ be a
neighborhood of the pair $(\boldsymbol{\bar{c}},\boldsymbol{\bar{z}})$
within which this pair is optimal to the problem (\ref{eq:pq}).  
% Without loss of generality, we may assume that $z = \bar{z}$ for any
% $z \in {\cal N}_z$. 
Define
\[ \begin{array}{l}
{\cal J}_1 \, \triangleq \, \left\{ \, (s,k)\, \mid \, 
\| \, \xi^s - \bar{c}^k \, \|_q \, < \, \displaystyle{
\min_{k^{\prime} \neq k}
} \, \| \, \xi^s - \bar{c}^{k^{\prime}} \, \|_q \, \right\} \\ [0.15in]
{\cal J}_0 \, \triangleq \, \left\{ \, (s,k) \,: \, 
\| \, \xi^s - \bar{c}^k \, \|_q \, > \, \displaystyle{
\min_{k^{\prime} \neq k}
} \, \| \, \xi^s - \bar{c}^{k^{\prime}} \, \|_q \, \right\}.
\end{array} \]
The singleton assumption implies the following:  

\gap

\noindent (i) ${\cal J}_c = \left\{ \, (s,k) \, \mid \, 
\| \, \xi^s - \bar{c}^k \, \|_q \, = \, \displaystyle{
\min_{k^{\prime} \neq k}
} \, \| \, \xi^s - \bar{c}^{k^{\prime}} \, \|_q \, \right\}$ is empty; 

\gap

\noindent (ii) ${\cal J}_1 = \left\{ \, (s,k)\, \mid \, 
\bar{z}_{sk} = 1 \, \right\}$ and
${\cal J}_0 = \left\{ \, (s,k) \, \mid \, \bar{z}_{sk} = 0 \, 
\right\}$; 
thus ${\cal J}_1$ and ${\cal J}_0$ partition $[ N ] \times [ K ]$; 
hence any pair $( \boldsymbol{c},\boldsymbol{z} )$ that is feasible 
to (\ref{eq:reduced MIP}) must have 
$\boldsymbol{z} = \boldsymbol{\bar{z}}$; and 

\gap

\noindent (iii) the neighborhood ${\cal N}_c$ can be chosen 
such that for all
$\boldsymbol{c} \in {\cal N}_c$,
\[ \begin{array}{l}
(s,k) \, \in \, {\cal J}_1 \ \Longleftrightarrow \ 
\| \, \xi^s - c^k \, \|_q \, < \, \displaystyle{
\min_{k^{\prime} \neq k}
} \, \| \, \xi^s - c^{k^{\prime}} \, \|_q \\ [0.1in]
(s,k) \, \in \, {\cal J}_0 \ \Longleftrightarrow \ 
\| \, \xi^s - c^k \, \|_q \, > \, \displaystyle{
\min_{k^{\prime} \neq k}
} \, \| \, \xi^s - c^{k^{\prime}} \, \|_q
\end{array} \]
Let $( \boldsymbol{c},\boldsymbol{z} )$ be an arbitrary feasible pair of
(\ref{eq:reduced MIP}).  Then $\boldsymbol{z} = \boldsymbol{\bar{z}}$.
Moreover, it is not difficult to show that for any scalar $\tau \in ( 0,1 )$
sufficiently small, the pair
$( \boldsymbol{c}^{\tau},\boldsymbol{\bar{z}} )$ is feasible to
(\ref{eq:pq}), where $\boldsymbol{c}^{\tau} \triangleq 
\boldsymbol{\bar{c}} + \tau ( \boldsymbol{c} - \boldsymbol{\bar{c}} )$.
Consequently, it follows, by the local optimality of the pair
$(\boldsymbol{\bar{c}},\boldsymbol{\bar{z}})$ to (\ref{eq:pq}), that
\[
\displaystyle{
\frac{1}{N}
} \, \displaystyle{
\sum_{s=1}^N
} \, \left[ \, \displaystyle{
\sum_{k=1}^K
} \, \bar{z}_{sk} \, \| \, \xi^s - c^{\tau;k} \, \|_p^p \, \right] 
\, \geq \, \displaystyle{
\frac{1}{N}
} \, \displaystyle{
\sum_{s=1}^N
} \, \left[ \, \displaystyle{
\sum_{k=1}^K
} \, \bar{z}_{sk} \, \| \, \xi^s - \bar{c}^k \, \|_p^p \, \right] 
\]
which implies, by the convexity of 
$\| \, \xi^s - \bullet \, \|_p^p$,
\[
\displaystyle{
\frac{1}{N}
} \, \displaystyle{
\sum_{s=1}^N
} \, \left[ \, \displaystyle{
\sum_{k=1}^K
} \, \bar{z}_{sk} \, \| \, \xi^s - c^k \, \|_p^p \, \right] 
\, \geq \, \displaystyle{
\frac{1}{N}
} \, \displaystyle{
\sum_{s=1}^N
} \, \left[ \, \displaystyle{
\sum_{k=1}^K
} \, \bar{z}_{sk} \, \| \, \xi^s - \bar{c}^k \, \|_p^p \, \right],
\]
establishing the statement (A).
\end{proof}

In the following, we construct a simple counterexample to show 
that statement (C) need not imply statement (A) in 
Theorem~\ref{th:fixed-point localmin} without 
Assumption~\ref{ass: sample-center-unique}. 
\begin{example} \rm
Let $p = q = 1$, $d=1$, $K=2$, $N=3$, ${\cal C}=[-3,3]^2$, 
$\xi^1=-2$, $\xi^2=0$, and $\xi^3=1$.
Consider $\bar c^1=-1$, $\bar c^2=1$, and
$\bar{\boldsymbol z}
= \begin{pmatrix}1&0\\1&0\\0&1\end{pmatrix}$.
The cluster assignment for the second sample $\xi^2$ is tied, 
whereas the other two assignments are strict.  The
objective value at $(\bar{\boldsymbol c}, \bar{\boldsymbol z})$ 
is
\[
F_{1,1}(\bar{\boldsymbol c},\bar{\boldsymbol z}) \, = \,
\frac{1}{3}(1+1+0)=\frac{2}{3}.
\]
For $c^1\in(-2,0)$ and $c^2\in(0,2)$, every feasible pair 
with assignment $\bar{\boldsymbol z}$ satisfies
\[
 F_{1,1}({\boldsymbol c},\bar{\boldsymbol z})
 =\frac13\bigl(|-2-c^1|+|c^1|+|1-c^2|\bigr)
 =\frac13\bigl(2+|1-c^2|\bigr)\geq\frac23.
\]
Moreover, a ball of radius less than $1/2$ around the binary vector
$\bar{\boldsymbol z}$ contains no other binary vector.  Hence, 
the pair $(\bar{\boldsymbol c}, \bar{\boldsymbol z})$ is a local minimizer of \eqref{eq:pq}. Note that 
\[
{\cal J}_<(\bar{\boldsymbol c}) =\{(1,1),(3,2)\},
 \qquad
 {\cal J}_>(\bar{\boldsymbol c}) =\{(1,2),(3,1)\}.
\]
Take $\widehat c^{\, 1}=-\frac32$, 
$\widehat c^{\, 2}=\frac12$, and $\widehat{\boldsymbol z}
=\begin{pmatrix}1&0\\0&1\\0&1\end{pmatrix}$,
which implies that 
$\widehat{\boldsymbol t}
=\begin{pmatrix}1/2&0\\0&1/2\\0&1/2\end{pmatrix}$.
For this pair 
$(\widehat{\boldsymbol c}, \widehat{\boldsymbol z})$, 
each sample is assigned to its unique nearest center, and the 
assignment respects every possible admissible fixing because 
it agrees with the four pairs in ${\cal J}_<(\bar{\boldsymbol c})$
and ${\cal J}_>(\bar{\boldsymbol c})$.  Thus it is feasible
for every reduced
problem that can be generated at the reference pair 
$(\bar{\boldsymbol c},\bar{\boldsymbol z})$. The objective value at 
$(\widehat{\boldsymbol c}, \widehat{\boldsymbol z})$ is
\[
 F_{1, 1}(\widehat{\boldsymbol c},\widehat{\boldsymbol z})
=\frac13\left(\frac12+\frac12+\frac12\right)
 =\frac12<\frac23.
\]
This implies that (A) fails.  \hfill $\Box$
\end{example}

\subsection{The matched norm case ($p = q$)}
\label{subsec:matched norm}

Traditionally,
a standard procedure for solving the $\ell_{p,q}$-problem 
(\ref{eq:p=q}) with $p = q$
is to alternate between the optimization of 
the $\boldsymbol{z}$ variable
(the clustering step) and the $\boldsymbol{c}$ variable 
(the centering
step).  In the former step, each point is assigned to its
$\ell_p$ nearest center, while the latter step updates each 
center to 
its $\ell_p$ optimal location within the cluster.  Due to three reasons:
(i) each step is nonincreasing in the objective; (ii) the number 
of distinct partitions (i.e., the $z_{sk}$ variables) is finite, 
and (iii) each updated center is unique given the partition, the 
procedure always terminates and the limit is a fixed point of the 
alternating map, at which neither step alone can reduce the 
objective.  More precisely, at termination, we obtain 
an {\sl equilibrium solution} of the Nash kind
of the bi-convex clustering problem (\ref{eq:p=q}); this 
is a pair 
$( \boldsymbol{\bar{c}},\boldsymbol{\bar{z}} )$ where
\[ \begin{array}{lll}
\boldsymbol{\bar{z}} & \triangleq & \left( \, z_{sk}\, \mid \, 
(s,k) \in [ N ] \times [ K ] \, \right) \\ [0.1in]
& \in & \displaystyle{
\operatornamewithlimits{\mbox{\bf argmin}}_{\boldsymbol{z} 
\, \in \, \{ 0,1 \}^{KN}}
} \, \underbrace{\displaystyle{
\sum_{s=1}^N
} \, \left[ \, \displaystyle{
\sum_{k=1}^K
} \,  z_{sk} \, \| \, \xi^s - \bar{c}^k \, \|_p^p \, \right]}_{\mbox{
assign clusters given centers fixed at $\boldsymbol{\bar{c}}$}} \,
\mbox{ \bf subject to} \epc \displaystyle{
\sum_{k=1}^K
} \, z_{sk} = 1, \epc \forall \, s \, \in \, [ \, N \, ]
\end{array} \]
% & & \epc \mbox{ \bf and } \epc z_{sk} \, \left\{ \begin{array}{ll}
% = \, 1 & \mbox{if $k \, \in \, \displaystyle{
% \operatornamewithlimits{\mbox{\bf argmin}}_{1 \leq j \leq K}
% } \, \| \, \xi^s - \bar{c}^j \, \|_q$} \\ [0.15in]
% = \, 0 & \mbox{otherwise.}
% \end{array} \right\} \ \forall \, (s,k) \, \in \, 
% [ \, N \, ] \, \times \, [ \, K \, ] \\ [0.2in]
and 
\[
\boldsymbol{\bar{c}} \, \triangleq \, \left( \, c^k_j\, \mid \, 
(j,k) \in [ d ] \times [ K ] \, \right)  \, \in \, \displaystyle{
\operatornamewithlimits{\mbox{\bf argmin}}_{\boldsymbol{c} \, \in \,
\mathbb{R}^{Kd}}
} \ \underbrace{\displaystyle{
\sum_{k=1}^K
} \, \left[ \, \displaystyle{
\sum_{s=1}^N
} \, \bar{z}_{sk} \, \| \, \xi^s - c^k \, \|_p^p\, \right]}_{\mbox{update
centers given clusters fixed by $\boldsymbol{\bar{z}}$}}.
\]
Clearly, there is no guarantee that the resulting pair
$( \boldsymbol{\bar{c}},\boldsymbol{\bar{z}} )$ will optimize 
the overall objective of (\ref{eq:pq}) globally.  Nevertheless, 
such a pair must optimize the objective locally.  Indeed, any pair 
$( \boldsymbol{c},\boldsymbol{z} )$ that is sufficiently
close to $( \boldsymbol{\bar{c}},\boldsymbol{\bar{z}} )$ must have
$\boldsymbol{z} = \boldsymbol{\bar{z}}$.  Thus a Nash equilibrium 
solution of (\ref{eq:pq}) must be a local minimizer of 
(\ref{eq:pq}).  Yet a local minimizer may not be global because
joint changes to both the centers
and the assignments may still decrease the objective.
As a result, the final solution obtained by an alternating 
method, which depends heavily on the starting point, can be 
far from globally
optimal.  We remark that while the alternating method remains 
in principle implementable
for $p \neq q$ based on the formulation (\ref{eq:pq}), 
convergence does not seem likely; an immediate reason is that 
decrease of the objective
is not guaranteed due to the Heaviside constraint.   

The question of whether alternating K-means converges to a local
optimum has been studied from an optimization perspective since
\cite{selim1984kmeans}, who characterized the local optimality
conditions for K-means type algorithms and proved finite
convergence of the alternating procedure.
Whether the algorithm always reaches a local minimum point 
remained open until recently.  The reference
\cite{li2025modified} establish conditions under which K-means
terminates at a local minimum and propose a modified algorithm
that guarantees local optimality in all cases, effectively
closing this question for the $p = q = 2$ setting.
For the mixed-norm case $p \neq q$, no analogous result exists:
the decoupling of assignment and quality norms means that the
structural conditions underlying the alternating-heuristic
modifications do not carry over, and the convergence question, 
let alone local optimality, remains entirely open.
The last reference, restricted to the K-means problem, addresses 
the question through exact 
integer programming, which bypasses the structural limitations 
of heuristic methods and
directly certifies local optimality as a consequence of solving
an IP subproblem to global optimality.

In what follows, we establish a sharpened result for
the $\ell_{p,p}$ problem based on the
simplified MIP formulation (\ref{eq:p=q}), which covers both the 
K-Means and K-Medians.  The simplified formulation permits removal 
of the nearest-center
constraints for the free assignment variables in the subproblems
(\ref{eq:reduced MIP}), but requires a completeness condition on
the zero fixings. Specifically, let
\begin{equation}\label{eq:matched index sets}
{\cal J}_1 \,  {=} \, 
{\cal J}_< (\bar{\boldsymbol c}), \qquad
{\cal J}_0 \, = \, {\cal J}_> (\bar{\boldsymbol c}), \qquad
{\cal J}_c \, = \, [ \, N \, ] \, \times \, [ \, K \, ]
\, \setminus \, \left( \, {\cal J}_1 \, \cup \, 
{\cal J}_0 \, \right).
\end{equation}
Note the particular choices of the two sets ${\cal J}_1$ and
${\cal J}_0$ of fixing the binary variables, whose
% Here, we choose ${\cal J}_0={\cal J}_> (\bar{\boldsymbol c})$ 
% to fix the predicted zeros. 
advantage will be clear momentarily.
The simplified restricted subproblem of \eqref{eq:reduced MIP}
has the following form:
\begin{equation}\label{eq:reduced MIP p=q}
\begin{array}{cll}
\displaystyle{
\operatornamewithlimits{\mbox{\bf minimize}}_{\boldsymbol c \in
{\cal C}, \, \boldsymbol z,\, \{ t_{sk}:(s,k)\in{\cal J}_c \}}
} & \displaystyle{
\frac{1}{N}
} \, \left[ \, \displaystyle{
\sum_{(s,k)\in{\cal J}_c}t_{sk}
} + \displaystyle{
\sum_{(s,k)\in{\cal J}_1}
} \, \lVert \, \xi^s-c^k \, \rVert_p^p \, \right] & \\ [0.2in]
\mbox{\bf subject to} & \displaystyle{
\sum_{k=1}^K
} \, z_{sk} \, = 1, & s \in [N] \\ [0.1in]
& t_{sk} \, \geq \, \lVert \, \xi^s-c^k \, \rVert_p^p -
M ( 1-z_{sk} ),\quad t_{sk} \, \geq \, 0, & (s,k) \in {\cal J}_c
\\ [0.2in]
& z_{sk} \, \in \, \{0,1\}, & (s,k) \in {\cal J}_c \\ [0.1in]
& \lVert \, \xi^s-c^k \, \rVert_p \, \leq \, 
\lVert \, \xi^s-c^{k^{\prime}} \, \rVert_p, 
& (s,k) \in {\cal J}_1,\ k^{\prime} \neq \, k \\ [0.1in]
& z_{sk} \, = \, 1, & (s,k) \in {\cal J}_1 \\ [0.1in]
\mbox{\bf and} & z_{sk} \, = \, 0, & (s,k) \in {\cal J}_0.
\end{array}
\end{equation}
% This problem is the computational cornerstone of the overall 
% algorithm to be presented later.
Unlike~\eqref{eq:reduced MIP}, the
formulation~\eqref{eq:reduced MIP p=q} does
not explicitly enforce nearest-center membership for   free 
binary variables with index $(s,k) \in \mathcal J_c$. The two 
index sets $\mathcal J_1$ and $\mathcal J_0$ are chosen equal 
to $\mathcal J_<(\bar {\boldsymbol c}) $ and $\mathcal J_>(\bar {\boldsymbol c})$ respectively in the above formulation for two 
reasons.   
% Its global solutions therefore need not be feasible 
% for~\eqref{eq:pq} far from the reference point. 
First, under the specific choice  ${\cal J}_0 \, = \, {\cal J}_> (\bar{\boldsymbol c})$ in (\ref{eq:matched index sets}), 
% the specific choice of 
% ${\cal J}_0={\cal J}_> (\bar{\boldsymbol c})$, 
we can obtain a 
characterization of the strong center-local optimality of 
\eqref{eq:p=q} regarding to problem \eqref{eq:reduced MIP p=q} in Theorem \ref{th:reduced p=q} for the $\ell_{p,p}$ problem without the sample-wise unique center 
assumption (i.e., Assumption \ref{ass: sample-center-unique}). Second, with $z_{sk}=0$ for all $(s,k) \in {\cal J}_0 \, = \, {\cal J}_> (\bar{\boldsymbol c})$, we have $z_{sk} =1$ for all $(s,k) \in \mathcal J_<(\bar{\boldsymbol{c}})$. Thus, without the nearest-center membership for free binary variables, we must set $\mathcal J_1 = \mathcal J_<(\bar {\boldsymbol c})$ in problem \eqref{eq:reduced MIP p=q} in order to keep it as a restricted subproblem to \eqref{eq:pq}.    
For clarity in the result below, we call the pair 
$(\bar{\boldsymbol c},\bar{\boldsymbol z})$ where 
$\bar{\boldsymbol{z}}$ is feasible to \eqref{eq:p=q} a 
{\sl strong center-local minimizer}
if there is a relative neighborhood ${\cal N}$ of 
$\bar{\boldsymbol c}$ such that
\[
F_{p, p}(\bar{\boldsymbol c},\bar{\boldsymbol z})
\leq F_{p, p}(\boldsymbol c,\boldsymbol z)
\]
for every $\boldsymbol{z}$ feasible to ~\eqref{eq:p=q} and every
$\boldsymbol c\in{\cal N}$.

\begin{theorem}\label{th:reduced p=q} \rm
Suppose $p=q$, let $(\bar{\boldsymbol c},\bar{\boldsymbol z})$ 
be feasible for~\eqref{eq:pq} and 
$\bar t_{sk}=\bar z_{sk}\lVert\xi^s-\bar c^k\rVert_p^p$.  The 
following statements are equivalent for the 
sets in~\eqref{eq:matched index sets}:

\noindent (A)
$(\bar{\boldsymbol c},\bar{\boldsymbol z},\bar{\boldsymbol t})$ 
is a global minimizer of~\eqref{eq:reduced MIP p=q};

\noindent (B)
$(\bar{\boldsymbol c},\bar{\boldsymbol z})$ is a strong 
center-local minimizer of~\eqref{eq:pq};

\noindent (C)
$(\bar{\boldsymbol c},\bar{\boldsymbol z})$ is a strong 
center-local minimizer of~\eqref{eq:p=q}.
\end{theorem}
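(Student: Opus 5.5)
We prove the cycle (A) $\Rightarrow$ (C) $\Rightarrow$ (B) $\Rightarrow$ (C) $\Rightarrow$ (A). Throughout, $p=q$ gives two facts. First, any $\ell_p$-nearest-center assignment minimizes $\boldsymbol z\mapsto F_{p,p}(\boldsymbol c,\boldsymbol z)$ over the simplex constraints. Second, the feasible set of \eqref{eq:pq} is contained in that of \eqref{eq:p=q}. For a center tuple $\boldsymbol c$, write $\boldsymbol z(\boldsymbol c)$ for any nearest-center assignment at $\boldsymbol c$, with ties broken arbitrarily; the pair $(\boldsymbol c,\boldsymbol z(\boldsymbol c))$ is feasible for \eqref{eq:pq}.

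\textbf{(B)$\Leftrightarrow$(C).} The implication (C) $\Rightarrow$ (B) holds because every feasible pair of \eqref{eq:pq} is feasible for \eqref{eq:p=q}. For (B) $\Rightarrow$ (C), take $\boldsymbol c\in{\cal N}$ and any $\boldsymbol z$ feasible to \eqref{eq:p=q}. Then
\[
F_{p,p}(\bar{\boldsymbol c},\bar{\boldsymbol z})\le F_{p,p}(\boldsymbol c,\boldsymbol z(\boldsymbol c))\le F_{p,p}(\boldsymbol c,\boldsymbol z),
\]
where the first inequality is (B) and the second is the first fact above.

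\textbf{(A) $\Rightarrow$ (C).} Choose a relative neighborhood ${\cal N}\subseteq{\cal C}$ of $\bar{\boldsymbol c}$ on which the strict-inequality preservation \eqref{eq:continuity} holds, as in the proof of Theorem~\ref{th:fixed-point localmin}. Fix $\boldsymbol c\in{\cal N}$ and $\boldsymbol z$ feasible to \eqref{eq:p=q}, and set $\boldsymbol z'=\boldsymbol z(\boldsymbol c)$.
\begin{itemize}
\item If $(s,k)\in{\cal J}_<(\bar{\boldsymbol c})$, then by \eqref{eq:continuity} center $k$ stays strictly nearest at $\boldsymbol c$. Hence $z'_{sk}=1$, and the constraint $\|\xi^s-c^k\|_p\le\|\xi^s-c^{k'}\|_p$ holds for all $k'\ne k$.
\item If $(s,k)\in{\cal J}_>(\bar{\boldsymbol c})$, some $k'$ stays strictly closer than $k$ at $\boldsymbol c$, so $z'_{sk}=0$.
\end{itemize}
Therefore $(\boldsymbol c,\boldsymbol z',\boldsymbol t')$ with $t'_{sk}=z'_{sk}\|\xi^s-c^k\|_p^p$ is feasible for \eqref{eq:reduced MIP p=q}; by the choice of $M$, $t'_{sk}\ge \|\xi^s-c^k\|_p^p-M(1-z'_{sk})$. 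Its objective equals $F_{p,p}(\boldsymbol c,\boldsymbol z')$. Likewise, the objective of \eqref{eq:reduced MIP p=q} at $(\bar{\boldsymbol c},\bar{\boldsymbol z},\bar{\boldsymbol t})$ equals $F_{p,p}(\bar{\boldsymbol c},\bar{\boldsymbol z})$. Combining (A) with the first fact gives
\[
F_{p,p}(\bar{\boldsymbol c},\bar{\boldsymbol z})\le F_{p,p}(\boldsymbol c,\boldsymbol z')\le F_{p,p}(\boldsymbol c,\boldsymbol z).
\]

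\textbf{(C) $\Rightarrow$ (A).} This is the main step, since no sample-wise uniqueness is assumed; it adapts the convexity argument of the preceding theorem. Let $(\boldsymbol c,\boldsymbol z,\boldsymbol t)$ be feasible for \eqref{eq:reduced MIP p=q}. Because $t_{sk}\ge 0$ and $t_{sk}\ge\|\xi^s-c^k\|_p^p$ whenever $z_{sk}=1$, its objective is at least $F_{p,p}(\boldsymbol c,\boldsymbol z)$. Keep this $\boldsymbol z$ fixed and define
\[
\varphi(\tau)\triangleq F_{p,p}(\bar{\boldsymbol c}+\tau(\boldsymbol c-\bar{\boldsymbol c}),\boldsymbol z),\qquad \tau\in[0,1].
\]
The function $\varphi$ is convex, and $\boldsymbol c^\tau\in{\cal C}$ by convexity of ${\cal C}$. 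The key observation is that $\varphi(0)=F_{p,p}(\bar{\boldsymbol c},\bar{\boldsymbol z})$, even though $\boldsymbol z$ may differ from $\bar{\boldsymbol z}$:
\begin{itemize}
\item For each $s$ with a strict nearest center at $\bar{\boldsymbol c}$, the fixings on ${\cal J}_1\cup{\cal J}_0$ force $z_{s\cdot}=\bar z_{s\cdot}$.
\item For a tied sample, the fixing on ${\cal J}_>(\bar{\boldsymbol c})$ forces $\boldsymbol z$ to select one of the tied nearest centers, as $\bar{\boldsymbol z}$ does. All tied centers have the same distance $\|\xi^s-\bar c^k\|_p^p$ at $\bar{\boldsymbol c}$, so the contributions coincide.
\end{itemize}
This is exactly why ${\cal J}_0$ must equal ${\cal J}_>(\bar{\boldsymbol c})$, and it is the step that needs care.

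For small $\tau>0$ we have $\boldsymbol c^\tau\in{\cal N}$, and $\boldsymbol z$ is feasible for \eqref{eq:p=q} since only the simplex and binary constraints are required. So (C) yields $\varphi(\tau)\ge\varphi(0)$ for these $\tau$. Convexity then gives $\varphi(1)\ge\varphi(0)$, because $\varphi(\tau)\le(1-\tau)\varphi(0)+\tau\varphi(1)$. Hence the objective of $(\boldsymbol c,\boldsymbol z,\boldsymbol t)$ is at least $\varphi(1)\ge F_{p,p}(\bar{\boldsymbol c},\bar{\boldsymbol z})$, the objective value at $(\bar{\boldsymbol c},\bar{\boldsymbol z},\bar{\boldsymbol t})$, which is feasible for \eqref{eq:reduced MIP p=q}. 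This establishes (A).
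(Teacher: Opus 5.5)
Your proof is correct and follows essentially the paper's argument. It uses the same (B)$\Leftrightarrow$(C) reasoning (nearest-center minimality plus inclusion of feasible sets), the same extension of nearby nearest-center assignments to feasible points of the reduced problem for the forward direction, and, for the converse, the same tie-value identity $F_{p,p}(\bar{\boldsymbol c},\boldsymbol z)=F_{p,p}(\bar{\boldsymbol c},\bar{\boldsymbol z})$ forced by ${\cal J}_0={\cal J}_>(\bar{\boldsymbol c})$, combined with convexity along the segment toward $\boldsymbol c$. The only difference is cosmetic: you apply (C) directly to the reduced-problem assignment $\boldsymbol z$ at $\boldsymbol c^\tau$. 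The paper instead applies (B) to a nearest-center assignment $\boldsymbol z^\lambda$ at $\boldsymbol c^\lambda$ and then uses $F_{p,p}(\boldsymbol c^\lambda,\boldsymbol z^\lambda)\le F_{p,p}(\boldsymbol c^\lambda,\boldsymbol z)$, which is slightly less direct.
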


\begin{proof} For simplicity, denote 
\[
{\cal Z} \, \triangleq \,
\left\{ \, \boldsymbol z \in \{0,1\}^{N\times K} \ \left| \
\displaystyle{
\sum_{k=1}^K
} \, z_{sk} = 1 \ \text{for every } s \in [ \, N \, ] \, 
\right. \right\}
\]
and
\[
\vartheta(\boldsymbol{c}) \, \triangleq \, \displaystyle{
\operatornamewithlimits{\mbox{\bf minimum}}_{\boldsymbol{z} 
\in \mathcal{Z}}
} \, F_{p, p}(\boldsymbol{c}, \boldsymbol{z}).
\]
Note that, for any $\widehat{\boldsymbol{c}} \in \mathcal{C}$ 
and $\widehat{\boldsymbol{z}} \in \mathcal{Z}$ such that 
$(\widehat{\boldsymbol{c}}, \widehat{\boldsymbol{z}})$ is feasible 
to \eqref{eq:pq} (with $p = q$), we have 
\[
\vartheta(\widehat{\boldsymbol{c}}) \, = \, F_{p,p}(\widehat{\boldsymbol{c}}, \widehat{\boldsymbol{z}}).
\]
We first establish the equivalence between (B) and (C). On
one hand, suppose (B) holds in a neighborhood $\mathcal{N}$ 
of $\bar{\boldsymbol c}$. For any ${\boldsymbol c} \in \mathcal{N}$,
choose $\boldsymbol{z}^{\boldsymbol{c}} \in \displaystyle{
\operatornamewithlimits{\mbox{\bf argmin}}_{z \in \mathcal{Z}} 
} \, F_p(\boldsymbol{c}, \boldsymbol{z})$. Then, 
$(\boldsymbol{c}, {\boldsymbol{z}}^{\boldsymbol{c}})$ is feasible 
to \eqref{eq:pq}. Together with the fact that 
$(\bar{\boldsymbol{c}}, \bar{\boldsymbol{z}})$ is a strong 
center-local minimizer of \eqref{eq:pq}, we have
\[
F_{p, p}(\bar{\boldsymbol{c}}, \bar{\boldsymbol{z}}) \leq F_{p, p}({\boldsymbol{c}}, {\boldsymbol{z}}^{\boldsymbol{c}}) = \vartheta(\boldsymbol{c}) \leq F_{p, p}(\boldsymbol{c}, \boldsymbol{z}) \quad  \forall \boldsymbol{z} \in \mathcal{Z}.
\]
This implies (C). On the other hand, the feasible set of \eqref{eq:pq} is a subset of the feasible set of \eqref{eq:p=q}, which implies that (C) $\Rightarrow$ (B). Therefore, (B) and (C) are equivalent. 

Next, we show that (A) and (B) are equivalent. The proof of 
(A) $\Rightarrow$ (B) is similar to the corresponding part of
Theorem~\ref{th:fixed-point localmin}: all admissible fixings 
remain valid in a neighborhood of $\bar{\boldsymbol c}$, and every 
nearby feasible point of \eqref{eq:pq} extends to a feasible point 
of~\eqref{eq:reduced MIP p=q}.  To prove (B) $\Rightarrow$ (A), let
\((\boldsymbol c,\boldsymbol z,\boldsymbol t)\) be an arbitrary 
feasible triplet of the matched-norm reduced problem 
\eqref{eq:reduced MIP p=q}, and denote its objective by
\[
\Psi(\boldsymbol c,\boldsymbol z,\boldsymbol t)
\triangleq
\frac{1}{N}
\sum_{(s,k)\in{\cal J}_c}t_{sk}
+
\frac{1}{N}
\sum_{(s,k)\in{\cal J}_1}
\lVert \xi^s-c^k\rVert_p^p.
\]
For every \((s,k)\in{\cal J}_c\), we know that
\[
t_{sk}\geq
z_{sk}\lVert \xi^s-c^k\rVert_p^p.
\]
Indeed, this follows from the big-\(M\) constraint when 
\(z_{sk}=1\)
and from \(t_{sk}\geq0\) when \(z_{sk}=0\). Together with 
the fixings on
\({\cal J}_1\) and \({\cal J}_0\), this gives
\begin{equation}\label{eq:reduced-dominates-F}
\Psi(\boldsymbol c,\boldsymbol z,\boldsymbol t) \, \geq \,
F_{p, p}(\boldsymbol c,\boldsymbol z).
\end{equation}
Moreover, we know that  
$\Psi(\bar{\boldsymbol c},\bar{\boldsymbol z},
\bar{\boldsymbol t}) \, = \,
F_{p, p}(\bar{\boldsymbol c},\bar{\boldsymbol z})$,
due to $\bar t_{sk}=\bar z_{sk}\lVert\xi^s-\bar c^k\rVert_p^p$. 
It therefore suffices to prove
$F_{p, p}(\boldsymbol c,\boldsymbol z) \geq
F_{p, p}(\bar{\boldsymbol c},\bar{\boldsymbol z})$.
Since
${\cal J}_0={\cal J}_>(\bar{\boldsymbol c})$, a key observation 
is that the center that can be selected by
$\boldsymbol z$  is the nearest one in $\bar{\boldsymbol c}$ 
(can be tied) for every sample.  This implies that
\begin{equation}\label{eq:matched-tie-value}
 F_{p, p}(\bar{\boldsymbol c},\boldsymbol z)
 =F_{p, p}(\bar{\boldsymbol c},\bar{\boldsymbol z}).
\end{equation}
More specifically, for each
\(s\in[N]\), define
\[
{\cal K}_s^\star
\triangleq
\operatornamewithlimits{\arg\min}_{k\in[K]}
\lVert \xi^s-\bar c^k\rVert_p,
\qquad
d_s^\star
\triangleq
\min_{k\in[K]}
\lVert \xi^s-\bar c^k\rVert_p.
\]
By the definition of \({\cal J}_{>}(\bar{\boldsymbol c})\), we have $z_{sk}=0, \; \; \forall \, k \notin {\cal K}_s^\star,$ and 
\[
(s,k)\notin{\cal J}_{>}(\bar{\boldsymbol c})
\; \iff \;
\lVert \xi^s-\bar c^k\rVert_p
\leq
\min_{k'\neq k}\lVert \xi^s-\bar c^{k'}\rVert_p 
\; \iff \;
k\in{\cal K}_s^\star.
\]
Therefore, combining with the feasibility of the reference assignment \(\bar{\boldsymbol z}\) to \eqref{eq:pq}, we have  
\[
\sum_{k=1}^K
z_{sk}\lVert \xi^s-\bar c^k\rVert_p^p
\; = \;
\sum_{k\in{\cal K}_s^\star}
z_{sk}\lVert \xi^s-\bar c^k\rVert_p^p \;
= \;
\min_{k\in[K]}
\lVert \xi^s-\bar c^k\rVert_p^p = \; 
\sum_{k\in{\cal K}_s^\star}
\bar z_{sk}\lVert \xi^s-\bar c^k\rVert_p^p, 
\]
which implies the equation \eqref{eq:matched-tie-value}. Now define, $\boldsymbol c^\lambda
\triangleq
(1-\lambda)\bar{\boldsymbol c}
+
\lambda\boldsymbol c$ for \(\lambda\in(0,1]\). 
Since \({\cal C}\) is convex,
\(\boldsymbol c^\lambda\in{\cal C}\). For every sufficiently small
\(\lambda>0\), \(\boldsymbol c^\lambda\) lies in the neighborhood
appearing in (B), and the strict inequalities defining
\({\cal J}_1\) and \({\cal J}_0\) remain valid at
\(\boldsymbol c^\lambda\).

Let \(\boldsymbol z^\lambda\) be any nearest-center assignment at
\(\boldsymbol c^\lambda\). The preserved strict inequalities imply that
\[
z^\lambda_{sk}=1
\quad\text{for }(s,k)\in{\cal J}_1,
\qquad
z^\lambda_{sk}=0
\quad\text{for }(s,k)\in{\cal J}_0.
\]
In particular, \((\boldsymbol c^\lambda,\boldsymbol z^\lambda)\) is
feasible for the original clustering problem. Set $t^{\lambda}_{sk} := z^{\lambda}_{sk}\|\xi^s - (c^{\lambda})^k\|_p^p$. By the strong
center-local optimality in (B),
\[
F_{p, p}(\bar{\boldsymbol c},\bar{\boldsymbol z}) = \Psi(\bar{\boldsymbol c},\bar{\boldsymbol z},
     \bar{\boldsymbol t})
\leq \Psi(\boldsymbol c^\lambda,\boldsymbol z^\lambda,
     \boldsymbol t^\lambda) = 
F_{p, p}(\boldsymbol c^\lambda,\boldsymbol z^\lambda).
\]
Moreover, since \(p=q\), assigning each sample to a nearest center minimizes the
\(\ell_p^p\) clustering cost for fixed centers. Hence
\[
F_{p, p}(\boldsymbol c^\lambda,\boldsymbol z^\lambda)
\leq
F_{p, p}(\boldsymbol c^\lambda,\boldsymbol z).
\]
Note that the function
\(F_{p, p}(\cdot,\boldsymbol z)\) is convex for fixed \(\boldsymbol z\). Therefore,
\[
F_{p, p}(\boldsymbol c^\lambda,\boldsymbol z)
\leq
(1-\lambda)F_{p, p}(\bar{\boldsymbol c},\boldsymbol z)
+
\lambda F_{p, p}(\boldsymbol c,\boldsymbol z).
\]
Combining the preceding inequalities with
\eqref{eq:matched-tie-value} gives
\[
F_{p, p}(\bar{\boldsymbol c},\bar{\boldsymbol z})
\leq
(1-\lambda)
F_{p, p}(\bar{\boldsymbol c},\bar{\boldsymbol z})
+
\lambda F_{p, p}(\boldsymbol c,\boldsymbol z).
\]
Since \(\lambda>0\), this implies
\[
F_{p, p}(\boldsymbol c,\boldsymbol z)
\geq
F_{p, p}(\bar{\boldsymbol c},\bar{\boldsymbol z}).
\]
Since the reduced feasible triplet
\((\boldsymbol c,\boldsymbol z,\boldsymbol t)\) was arbitrary, this proves (A).
\end{proof}

\begin{remark} \rm
Central to the above proof, the 
equality~\eqref{eq:matched-tie-value} is ensured
by the choice  ${\cal J}_0={\cal J}_>(\bar{\boldsymbol c})$ and $p=q$, 
without which a free binary variable could
select a center that is strictly non-nearest at the reference 
point, therefore jeopardizing \eqref{eq:matched-tie-value} and 
the reverse implication (B) $\Rightarrow$ (A).  \hfill $\Box$
\end{remark}

%
% In Theorem \ref{th:reduced p=q}, we do not require 
% Assumption \ref{ass: sample-center-unique} due to $p = q$ and the particular 
% choice of ${\cal J}_0={\cal J}_>(\bar{\boldsymbol c})$. This choice is important
% for the reverse implication in Theorem~\ref{th:reduced p=q}.  With only
% ${\cal J}_0\subseteq{\cal J}_>(\bar{\boldsymbol c})$, a free binary variable could
% select a center that is strictly non-nearest at the reference point, and
% identity~\eqref{eq:matched-tie-value} would fail. Of course, 
% if Assumption \ref{ass: sample-center-unique} holds, the statements (A) and (B) 
% in Theorem \ref{th:reduced p=q} are also equivalent to 
% that $(\boldsymbol{\bar{c}}, \boldsymbol{\bar{z}})$ is a local minimizer 
% of \eqref{eq:pq}.
% \end{remark}

% Until now, we have characterized the strong center-local 
% minimizer of the mixed-norm clustering 
% model \eqref{eq:pq} by its corresponding restricted subproblems. 
% Next, we will focus on an efficient implementation of the PIP 
% method for $q = 1$, for which the nonconvex dc-constraints 
% admit a tractable convex inner approximation.

\section{The PIP Method for Solving \eqref{eq:pq} with 
\texorpdfstring{$q=1$}{q=1}}
% Convexification and Implementation}
\label{sec: PIP-q=1}

Theorem~\ref{th:fixed-point localmin} suggests the following iterative
scheme to compute a locally optimal solution of (\ref{eq:pq}) for general
pairs $(p,q)$.   
Given a feasible pair $\boldsymbol{\bar{y}} \triangleq 
( \boldsymbol{\bar{c}},\boldsymbol{\bar{z}} )$, select a pair of disjoint
index sets ${\cal J}_1$ and ${\cal J}_0$ satisfying 
(\ref{eq:index set inclusions}), then solve the subproblem (\ref{eq:reduced MIP}).
Repeat these steps until the given pair 
$( \boldsymbol{\bar{c}},\boldsymbol{\bar{z}} )$ is an optimal solution
of its own defined problem (\ref{eq:reduced MIP}). % We will subsequently 
Postponing the details of 
an efficient implementation to select the index sets and to terminate the iterations,
we first discuss a practical way to solve (\ref{eq:reduced MIP}), whose challenge
lies in the difference-of-convex (dc) constraint 
$\lVert\xi^s-c^k\rVert_q\leq\lVert\xi^s-c^{k'}\rVert_q$.  
We will develop a convex inner approximation to handle this constraint.

\subsection{A convex inner approximation}
\label{subsec:q1-workhorse}

Corresponding to a given $\bar{\boldsymbol{c}}$, 
let ${\cal J}_1$ and ${\cal J}_0$ be any admissible fixing sets 
satisfying
\eqref{eq:index set inclusions}.  If $(s,k)\in{\cal J}_1$, then the assignment
equation must fix $z_{sk'}=0$ for every $k'\ne k$.  To eliminate these redundant
variables explicitly, define
\[ \begin{array}{lll}
{\cal J}_0^{\,\prime} & \triangleq & {\cal J}_0 \, \cup \,
\left\{ \, (s,k^{\prime} ) \, \mid \, \text{there exists } 
k \neq k^{\prime}
\text{ with }(s,k)\in{\cal J}_1 \, \right\} \\ [0.1in]
{\cal J}_c^{\,\prime} & \triangleq & [\, N \, ] \, \times \, [K] \, 
\setminus \, \left( \, {\cal J}_1 \, \cup \, {\cal J}_0^{\,\prime}
\, \right) \\ [0.1in]
{\cal J}_s^{\,\prime} & \triangleq &
\left\{ \, s \in [ \, N \, ] \, \mid \, \text{there exists } k 
\text{ with } (s,k) \in {\cal J}_c^{\,\prime} \, \right\}.
\end{array} \]
Thus ${\cal J}_0^{\,\prime}$ contains both the assignments 
fixed directly to zero and those forced to zero by a one-fixing.  
Binary variables remain only for pairs in ${\cal J}_c^{\,\prime}$, 
and ${\cal J}_s^{\,\prime}$ identifies
the samples whose assignments have not yet been determined.
%
% \noindent In view of the last implication, we expand the index set 
% ${\cal J}_0$ to include those pairs $(s,k^{\prime})$ for which there exists
% $k \neq k^{\prime}$ such that $(s,k) \in {\cal J}_1$ and denote the 
% resulting expanded set by ${\cal J}_0^{\, \prime}$.  We let
% \[
% {\cal J}_c^{\, \prime} \, \triangleq \, [ \, N \, ] \times \, [ \, K \, ] 
% \, \setminus \,
% \left( \, {\cal J}_1 \, \cup \, {\cal J}_0^{\, \prime} \, \right),
% \]
% which contains all pairs $(s,k)$ whose binary variables $z_{sk}$ 
% have yet to be determined, and thus need to be solved for. 
% Associated with ${\cal J}_c^{\, \prime}$, define the subset 
% of samples 
% \[
% {\cal J}_s^{\, \prime} \, \triangleq \, \left\{ \, s \, \in \, 
% [ \, N \, ] \, \mid \, \exists \, k \, \in \, [ K ]
% \mbox{ such that } (s,k) \in {\cal J}_c^{\, \prime} \, \right\}.
% \]
Based on these revised index sets, we can further reduce problem
(\ref{eq:reduced MIP}) as follows:
\begin{equation} \label{eq:reduced MIP exposed} 
\begin{array}{ll}
\underset{\boldsymbol{c} \, \in \, {\cal C}; \, \boldsymbol{z}; 
\, \boldsymbol{t}}{\mbox{\bf minimize}} & \displaystyle{
\frac{1}{N}
} \, \displaystyle{
\sum_{(s,k) \in {\cal J}_c^{\, \prime}}
} \, t_{sk} + \displaystyle{
\frac{1}{N}
} \, \displaystyle{
\sum_{(s,k) \in {\cal J}_1}
} \ \| \, \xi^s - c^k \, \|_p^p \\ [0.2in]
\mbox{\bf subject to} & \displaystyle{
\sum_{k=1}^K                                          
} \, z_{sk} = 1 \, \left( \, = \, \displaystyle{
\sum_{k\, \mid \, (s,k) \in {\cal J}_c^{\, \prime}}
} \, z_{sk} \, \right) \hspace{0.25in} \forall \, 
s \, \in \, {\cal J}_s^{\, \prime} \\ [0.25in]
& \left\{ \begin{array}{l}
\mbox{for all $(s,k) \, \in \, {\cal J}_c^{\, \prime}$:} \\ [0.1in]
t_{sk} \, \geq \, \| \, \xi^s - c^k \, \|_p^p - M(1 - z_{sk});
\epc t_{sk} \, \geq \, 0 \\ [0.1in]
\| \, \xi^s - c^k \, \|_q - \| \, \xi^s - c^{k^{\prime}} \, \|_q \, 
\leq \, M \, (1 - z_{sk}), \quad \forall \, k^{\prime} \, \neq \, k 
\\ [0.1in]
z_{sk} \, \in \, \{ \, 0,1 \, \}
\end{array} \right\} \\ [0.5in]
& \| \, \xi^s - c^k \, \|_q \, \leq \,
\| \, \xi^s - c^{k^{\prime}} \, \|_q \epc \forall \, 
(s,k) \, \in \, {\cal J}_1 \\ [0.1in]
& z_{sk} \, = \, 1, \ (s,k) \, \in \, {\cal J}_1; 
\ \mbox{ \bf and } \
z_{sk} \, = \, 0, \ (s,k) \, \in \, {\cal J}_0^{\, \prime}.	
\end{array}
\end{equation}
Next, we solve the nonconvex
problem (\ref{eq:reduced MIP exposed}) (or equivalently
\eqref{eq:reduced MIP}) with dc constraints by linearizing the concave term in each difference-of-convex function, which is
$-\| \, \xi^s - c^{k^{\prime}} \, \|_q$ in (\ref{eq:reduced MIP exposed}).
For $q = 1$, this is done by lower bounding 
\begin{equation}
\label{eq:norm lower bound}
\| \, \xi^s - c^{k^{\prime}} \|_1 \, = \, \displaystyle{
\sum_{j=1}^d
} \, | \, \xi^s_j - c^{k^{\prime}}_j \, | \, \geq \, \displaystyle{
\sum_{j=1}^d
} \, \bar{\sigma}_{s{k^{\prime}}j} \, ( \, \xi^s_j - c^{k^{\prime}}_j \, ),
\end{equation}
where 
\[\bar{\sigma}_{skj} \triangleq \left\{ \begin{array}{ll}
\mbox{sgn}(\xi^s_j - \bar{c}^k_j) & \mbox{if $\xi^s_j \neq \bar{c}^k_j$}
\\ [5pt]
0 & \mbox{otherwise.} 
\end{array} \right.\]
The problem (\ref{eq:reduced MIP exposed}) with
$q = 1$ is approximated by the following IP whose constraints are 
all convex except for the binary variables:
\begin{equation}\label{eq:reduced MIP final}
\begin{array}{cll}
\displaystyle{ 
\operatornamewithlimits{\mbox{\bf minimize}}_{\boldsymbol c
\in {\cal C},\,\{ z_{sk}, t_{sk}:(s,k)\in{\cal J}_c^{\,\prime} \}}
} & \displaystyle{
\frac{1}{N}
} \, \left[ \, \displaystyle{
\sum_{(s,k)\in{\cal J}_c^{\,\prime}}
} \, t_{sk} + \displaystyle{
\sum_{(s,k)\in{\cal J}_1}
} \, \lVert \, \xi^s-c^k \, \rVert_p^p \, \right] & \\ [0.2in]
\mbox{\bf subject to} & \displaystyle{
\sum_{k : (s,k)\in{\cal J}_c^{\,\prime}}
} \, z_{sk} \, = \, 1, & s \in {\cal J}_s^{\,\prime} \\ [0.2in]
& t_{sk} \, \geq \, \lVert \, \xi^s-c^k \, \rVert_p^p -
M ( 1-z_{sk} ), \quad t_{sk} \, \geq \, 0, 
& (s,k) \in {\cal J}_c^{\,\prime} \\ [0.1in]
& \lVert \, \xi^s-c^k \, \rVert_1 - \displaystyle{
\sum_{j=1}^d
} \, \bar\sigma_{sk^{\prime}j}(\xi_j^s-c_j^{k^{\prime}})
\, \leq \, M ( 1-z_{sk} ), & (s,k) \in {\cal J}_c^{\,\prime},
\ k^{\prime} \neq k \\ [0.1in]
& z_{sk} \, \in \, \{0,1\}, & (s,k) \in {\cal J}_c^{\,\prime}
\\ [0.1in]
\mbox{\bf and} & \lVert \, \xi^s-c^k \, \rVert_1 \, \leq \, 
\displaystyle{
\sum_{j=1}^d
} \, \bar\sigma_{sk^{\prime}j}(\xi_j^s-c_j^{k^{\prime}}),
& (s,k) \in {\cal J}_1, \ k^{\prime} \neq k.
\end{array}
\end{equation}
Problem (\ref{eq:reduced MIP final}) is a further restriction of the
clustering $\ell_{p,1}$ problem \eqref{eq:pq} because its constraint
set is a subset
of the constraint set of (\ref{eq:reduced MIP}).  We state a direct
connection between problems (\ref{eq:reduced MIP final}) and 
(\ref{eq:pq}) for $q = 1$ under the following coordinate-wise 
center-sample separation assumption.

% \begin{assumption} \rm
% \label{ass: coordinate-cen-sam-sep}
% \end{assumption}

\begin{theorem} \label{th:fixed-point localmin linearized} \rm
Let 
$( \boldsymbol{\bar{c}},\boldsymbol{\bar{z}})$ be a feasible solution to (\ref{eq:pq}), and set $\bar{t}_{sk} = \bar{z}_{sk}\|\xi^s - \bar{\boldsymbol{c}}^k\|_p^p, \forall (s, k) \in [N] \times [K]$.  Suppose that
$\bar c_j^k \neq \xi_j^s$ for 
every triplet $(s,k,j)$ in $[ N ] \times [ K ] \times [ d ]$.  
Among the following statements:

\noindent (A) 
$(\boldsymbol{\bar{c}},\boldsymbol{\bar{z}},\boldsymbol{\bar{t}} )$ 
is (globally) optimal for (\ref{eq:reduced MIP final}) for 
some pair of disjoint index sets ${\cal J}_1$ and ${\cal J}_0$ 
satisfying the inclusions (\ref{eq:index set inclusions});

\noindent (B) the pair 
$( \boldsymbol{\bar{c}},\boldsymbol{\bar{z}})$ is a strong 
center-local minimizer of \eqref{eq:pq} for some neighborhood 
$\mathcal{N}$ of $\boldsymbol{\bar{c}}$; 

\noindent (C) $( \boldsymbol{\bar{c}},\boldsymbol{\bar{z}} )$ 
is a locally optimal solution of (\ref{eq:pq});

\gap 

\noindent it holds that (A) $\Rightarrow$ (B) $\Rightarrow$ (C).  
If Assumption \ref{ass: sample-center-unique} holds, 
then (C) $\Rightarrow$ (A).
\end{theorem}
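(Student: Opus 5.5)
The plan is to reduce everything to Theorem~\ref{th:fixed-point localmin}. The key tool is the separation hypothesis $\bar c^k_j \neq \xi^s_j$. Under it, every $\bar\sigma_{skj}$ is a nonzero sign, and the linear minorant in \eqref{eq:norm lower bound} is \emph{exact} on the open box
\[
\mathcal{B} \triangleq \{\boldsymbol c : \mathrm{sgn}(\xi^s_j - c^{k}_j) = \bar\sigma_{skj} \ \forall (s,k,j)\}.
\]
This box is a neighborhood of $\bar{\boldsymbol c}$. So, for $\boldsymbol c \in \mathcal{B}$, we have $\sum_j \bar\sigma_{sk'j}(\xi^s_j - c^{k'}_j) = \|\xi^s - c^{k'}\|_1$. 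Consequently, the constraints of \eqref{eq:reduced MIP final} coincide with those of \eqref{eq:reduced MIP exposed}, and hence with those of \eqref{eq:reduced MIP}, once the centers are restricted to $\mathcal{B}$. The elimination of the forced zeros in ${\cal J}_0'$ is harmless, because of the assignment equation.

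\textbf{(A) $\Rightarrow$ (B).} I would repeat the argument of Theorem~\ref{th:fixed-point localmin}, with the neighborhood chosen as $\mathcal{N} \cap \mathcal{B}$. Here $\mathcal{N}$ is the neighborhood from \eqref{eq:continuity} on which strict $\ell_1$ orderings at $\bar{\boldsymbol c}$ persist. Take any feasible $(\boldsymbol c,\boldsymbol z)$ of \eqref{eq:pq} with $\boldsymbol c$ in this set, and put $t_{sk} = z_{sk}\|\xi^s-c^k\|_p^p$. The persistence of strict orderings gives $z=1$ on ${\cal J}_1$ and $z=0$ on ${\cal J}_0$, hence also $z=0$ on ${\cal J}_0'$. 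The nearest-center constraints of \eqref{eq:pq} then become the linearized ones, by exactness on $\mathcal{B}$. So the triplet is feasible for \eqref{eq:reduced MIP final}. Its objective there equals $F_{1}$-type value $F_{p,1}(\boldsymbol c,\boldsymbol z)$, and the objective at $(\bar{\boldsymbol c},\bar{\boldsymbol z},\bar{\boldsymbol t})$ equals $F_{p,1}(\bar{\boldsymbol c},\bar{\boldsymbol z})$. Global optimality in (A) therefore gives (B). The implication (B) $\Rightarrow$ (C) is immediate, as in Theorem~\ref{th:fixed-point localmin}.

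\textbf{(C) $\Rightarrow$ (A) under Assumption~\ref{ass: sample-center-unique}.} I would take ${\cal J}_1={\cal J}_<(\bar{\boldsymbol c})$ and ${\cal J}_0={\cal J}_>(\bar{\boldsymbol c})$. By uniqueness of the nearest center, these sets partition $[N]\times[K]$. Then ${\cal J}_c'=\emptyset$, and every feasible point of \eqref{eq:reduced MIP final} has $\boldsymbol z=\bar{\boldsymbol z}$. Let $\boldsymbol c$ be feasible for \eqref{eq:reduced MIP final}, possibly far from $\bar{\boldsymbol c}$. The linearized constraint is convex in $\boldsymbol c$: it is a convex function minus an affine one. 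It holds at $\bar{\boldsymbol c}$ (strictly, with exactness), and at $\boldsymbol c$. Hence it holds along the segment $\boldsymbol c^\tau = \bar{\boldsymbol c}+\tau(\boldsymbol c-\bar{\boldsymbol c})$. Because the linearization minorizes the true norm, the true constraint $\|\xi^s-c^{\tau;k}\|_1 \le \|\xi^s-c^{\tau;k'}\|_1$ also holds. Thus $(\boldsymbol c^\tau,\bar{\boldsymbol z})$ is feasible for \eqref{eq:pq}; in fact this holds for all $\tau\in[0,1]$, not only small $\tau$. For small $\tau$ the pair $(\boldsymbol c^\tau,\bar{\boldsymbol z})$ lies in the neighborhood of (C), so $F_{p,1}(\boldsymbol c^\tau,\bar{\boldsymbol z})\ge F_{p,1}(\bar{\boldsymbol c},\bar{\boldsymbol z})$. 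Convexity of $F_{p,1}(\cdot,\bar{\boldsymbol z})$ then transfers this inequality to $\boldsymbol c$, exactly as in the proof of the earlier equivalence theorem. This gives (A).

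\textbf{Main obstacle.} I expect the main difficulty in (A) $\Rightarrow$ (B) to be making sure that linearization does not cut off nearby feasible points. This is precisely where the separation hypothesis enters. Without it, some $\bar\sigma=0$ would make the minorant strictly smaller than the norm near $\bar{\boldsymbol c}$. The obstacle is resolved by showing that the box $\mathcal{B}$ is open and contains $\bar{\boldsymbol c}$.
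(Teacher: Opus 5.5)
Your proposal is correct and follows essentially the paper's route: you choose a neighborhood on which strict $\ell_1$ orderings persist and the signs $\bar\sigma_{skj}$ are preserved, so that \eqref{eq:reduced MIP final} coincides locally with \eqref{eq:reduced MIP} and the argument of Theorem~\ref{th:fixed-point localmin} goes through. For (C) $\Rightarrow$ (A), the paper only defers to the earlier results: global optimality for \eqref{eq:reduced MIP} passes to its restriction \eqref{eq:reduced MIP final}, which still contains the incumbent, whereas your direct segment-plus-convexity argument reaches the same conclusion and correctly observes that the separation hypothesis is not needed for that direction.
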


\begin{proof} Let the neighborhood ${\cal N}$ be 
chosen such that for all $\boldsymbol{c} \in {\cal N}$ for all
$s \in [ N ]$:
(i) the implications in (\ref{eq:continuity}) hold for all
$(k,k^{\prime}) \in [ K ] \times [ K ]$, and (ii) $c^k_j \neq \xi^s_j$
for all $(k, j) \in [ K ] \times [ d ]$. 
Let $( \boldsymbol{c},\boldsymbol{z} ) \in {\cal N}
\times \in \{ \, 0,1 \, \}^{NK}$ be an arbitrary feasible pair to
(\ref{eq:pq}). By (ii), it follows that restricted to this
neighborhood ${\cal N}$ of $\boldsymbol{c}$, 
problem (\ref{eq:reduced MIP final})  
is exactly the same as problem (\ref{eq:reduced MIP}).  Therefore,
Theorem~\ref{th:fixed-point localmin} applies, and the statements 
follow directly.
\end{proof}

\subsection{A summary of formulations and connections}

We summarize the logical chains connecting the four related
formulations for the $\ell_{p,q}$ mixed-norm clustering problem.

% \noindent
% $(\mathbf{P}_{\mathrm{par}})$ is solved via the following MIP, obtained by
% introducing auxiliary variables $t_{sk}\geq 0$:
% \begin{equation}\label{eq:Ppar-MIP}
% (\mathbf{P}_{\mathrm{par}}\text{-MIP}) \quad
%    \begin{aligned}
%        &\mbox{\bf minimize}
%          &&\frac{1}{N}\!\sum_{(s,k)\in\mathcal{J}^0_\tau}\!t_{sk}
%            +\frac{1}{N}\!\sum_{(s,k)\in\mathcal{J}^+_\tau}
%            \!\|c^k-\xi^s\|_2^2 \\[6pt]
%        &\mbox{\bf subject to}
%           &&t_{sk}\geq\|c^k-\xi^s\|_2^2-M(1-z_{sk}),
%            \quad\forall\,(s,k)\in\mathcal{J}^0_\tau, \\[6pt]
%        &&&t_{sk}\geq 0,\quad\forall\,(s,k)\in\mathcal{J}^0_\tau, \\[6pt]
%        &&&\sum_{k:\,(s,k)\in\mathcal{J}^0_\tau}\!z_{sk}=1,
%            \;\forall\,s\text{ s.t. }(s,k)\in\mathcal{J}^0_\tau
%            \text{ for some }k, \\[6pt]
%        &&&\sum_{j=1}^{d}|c^k_j-\xi^s_j|
%            -\sum_{j=1}^{d}|c^{k'}_j-\xi^s_j|
%            \leq M(1-z_{sk}),
%            \quad\forall\,(s,k)\in\mathcal{J}^0_\tau,\;k'\neq k, \\[6pt]
%        &&&z_{sk}\in\{0,1\},\quad\forall\,(s,k)\in\mathcal{J}^0_\tau.
%    \end{aligned}
%\end{equation}

% \subsection{Convergence Analysis}
% \label{subsec:convergence}
%
% Figure~\ref{fig:chain} displays the logical chain connecting all five
% formulations introduced in this section.

\begin{figure}[H]
\centering
\begin{tikzpicture}[
  box/.style = {draw, rounded corners=3pt,
                minimum width=2.2cm, minimum height=0.82cm,
                align=center, font=\small},
  lbl/.style  = {font=\scriptsize, align=center},
  >=stealth, thick
]

%% NODES
\node[box] (eq3) at (0.0, 0.0) {$(\ref{eq:pq})$\\[1pt]original};
% $(\mathbf{P}_0)$};
\node[box] (eq4)  at (4.0, 0.0) {$(\ref{eq:MIP})$\\[1pt]full MIP};
\node[box] (Prst) at (4.0,-2.5) {$(\ref{eq:reduced MIP}) \Leftrightarrow 
(\ref{eq:reduced MIP exposed})$\\[1pt]partial MIP};
\node[box] (Ppar) at (9.2, -2.5) {$(\ref{eq:reduced MIP final})$ for $q = 1$
\\[1pt]computational workhorse};
% \node[box, minimum width=2.6cm] (PM) at (10.5,-2.5)
%    {$(10)$\\[1pt]$(\mathbf{P}_{\mathrm{par}}\text{-MIP})$};

%% EDGES
\draw[<->] (eq3) --
    node[lbl, above]{globally \\ equivalent}
    node[lbl, below]{via big-M} (eq4);

\draw[->] (eq4.south) --
    node[lbl, left, xshift=-3pt]{restriction \\ for PIP}
    (Prst.north);

\draw[->] (Prst) --
    node[lbl, above]{convexification} (Ppar);

% \draw[<-] (Ppar) --
%    node[lbl, above]{working formula\\(big-$M$)} (PM);

%% CONVERGENCE BRACKET 
% \draw[thin,gray] (Prst.south) -- (4.0, -3.3);
% \draw[thin,gray] (Ppar.south) -- (7.2, -3.3);
% \draw[thin,gray] (PM.south)   -- (10.5,-3.3);
% \draw[thin,gray] (4.0,-3.3)   -- (10.5,-3.3);
% \draw[->,thin,gray] (7.25,-3.3) -- (7.25,-3.52);
% \node[lbl] at (7.25,-3.72)
%    {global opt.\ $\Rightarrow$ local opt.\ for $(3)$ and $(4)$};

\end{tikzpicture}
% \caption{\label{fig:chain}}
\end{figure}

\begin{center}
\begin{tabular}{ccccc}
self-global of (\ref{eq:reduced MIP}) & 
$\xRightarrow{\rm{Thm.~\ref{th:fixed-point localmin}}}$ & 
strong local of (\ref{eq:pq}) & 
$\xRightarrow[\text{unique center}]{\text{sample-wise }}$ &
self-global of (\ref{eq:reduced MIP}) \\ [0.1in]
$\Big\Downarrow$ \\ [0.1in]
self-global of (\ref{eq:reduced MIP final}) &
$\xRightarrow[q = 1]{\rm Thm.~\ref{th:fixed-point localmin linearized}}$ & 
strong local of (\ref{eq:pq}) &
$\xRightarrow[\text{+ separation}]{\text{unique center}}$ &
self-global of (\ref{eq:reduced MIP final}).
\end{tabular}
\end{center}

\subsection{The construction of the fixing index sets}
\label{subsec:temperature-fixing}

While (\ref{eq:reduced MIP final}) is the computational workhorse 
of the PIP method,
its implementation requires some specifications to enhance its practical
performance.  First is the choice of the index sets to fix the binary 
variables. Let $(\bar{\boldsymbol c},\bar{\boldsymbol z})$ be the current feasible
solution at hand, and let $\rho_{sk}(\tau)$ be the softmax probabilities
in~\eqref{eq:softmax} for (adjustable) $\tau > 0$. Together with a given
$\varepsilon \in ( 0, 1/K )$, we set
\[ \begin{array}{lll}
{\cal J}_1(\tau) & \triangleq &
\left\{ \, \{(s,k) \, \mid \, \rho_{sk}(\tau) \, \geq \, \thalf \, \right\}, \quad \left(\mbox{replace ``$\geq$'' by ``$>$'' if $K = 2$}\right)\\ [0.1in]
{\cal J}_0^{\, \prime}(\tau) & \triangleq &
\underbrace{\left\{ \, (s,k) \, \mid \, \rho_{sk}(\tau) \, < \, \displaystyle{
\frac{1}{K}
} - \varepsilon \, \right\}}_{\mbox{$= {\cal J}_0(\tau)$}} \, \bigcup \, 
\underbrace{\left\{ \, (s,k^{\prime}) \, \mid \, \exists \, k \neq k^{\prime} 
\mbox{ such that } (s,k) \, \in \, {\cal J}_1(\tau) \, \right\}}_{\mbox{
implies $\bar{z}_{sk^{\prime}} = 0$}},\\ [0.4in]
{\cal J}_c^{\, \prime}(\tau) & \triangleq & [ \, N \, ] \times \, [ \, K \, ] 
\, \setminus \,
\left( \, {\cal J}_1(\tau)\, \cup \, {\cal J}_0^{\, \prime}(\tau) \, \right), 
\\ [0.1in]
% & = & \underbrace{\left\{ \, (s,k) \, \mid \, \displaystyle{
% \frac{1}{K}
% } - \varepsilon \, \leq \, \rho_{sk}(\tau) \, < \, \thalf \, \right\}}_{\mbox{
% $= {\cal J}_c(\tau)$}} \, \bigcap \, 
% \underbrace{\left\{ \, (s,k^{\prime}) \, \mid \, \rho_{sk}(\tau) 
% \, < \, \thalf \, \mbox{ for all $k \neq k^{\prime}$} \,\right\}}_{\mbox{
% complement of ${\cal J}_0(\tau)$ in ${\cal J}_0^{\prime}(\tau)$}} 
% \\ [0.4in]
{\cal J}_s^{\prime}(\tau) & \triangleq & \left\{ \, s \, \in \, [ \, N \, ] 
\, \mid \, \exists \, k \, \in \, [ K ]
\mbox{ such that } (s,k) \in {\cal J}_c^{\, \prime}(\tau) \, \right\}.
\end{array} \]
These sets play the roles of ${\cal J}_1$, ${\cal J}_0^{\, \prime}$,
${\cal J}_c^{\, \prime}$, and ${\cal J}_s^{\,\prime}$ in 
(\ref{eq:reduced MIP final}), respectively.
The choice of $\tau$ is based on the principle that we want to keep 
the number of unfixed binary variables (which are indexed by the
elements of the set ${\cal J}_c^{\, \prime}$) not large (while maintaining
the feasibility of the sum constraint )
and yet not small,
the reason for the former is to control the computational effort and 
the reason for the latter is not to be too aggressive in fixing the 
undetermined binary variables. Since
$\displaystyle{
\lim_{\tau \uparrow \infty}
} \, \rho_{sk}(\tau) = 1/K$ for all 
$(s, k) \in [ \, N \, ] \times \, [ \, K \, ]$, 
it follows that for $K > 2$ and sufficiently large $\tau$, $
 {\cal J}_1(\tau)={\cal J}_0(\tau)=\emptyset,
 \,\,
 {\cal J}_c^{\,\prime}(\tau)=[N]\times[K].$
In our implementation, to balance the cardinality of the unfixed binary 
variable set, we
increase $\tau$ till a prescribed upper bound is reached if the iteration 
does not progress  well, or  readjust $\tau$ to its initial value if there 
is an improvement in the objective.  

\subsection{A working set implementation for fixed assignments}
\label{subsec:working set}

The other control in the solution of (\ref{eq:reduced MIP final}) 
is the number of the constraints:
\begin{equation} \label{eq:linearized dc}
\| \, \xi^s - c^k \, \|_1 \, \leq \, \displaystyle{
\sum_{j=1}^d
} \, \bar{\sigma}_{sk'j} \, ( \, \xi^s_j - c^{k^{\prime}}_j \, )
\epc \forall \, (s,k) \, \in \, {\cal J}_1(\tau), \, k'\neq k
\end{equation}
which although are linearized, may create a large model if they are all
included. Indeed, not all these constraints are needed because their role is
to reinforce the validity of fixing $z_{sk} = 1$ for 
$(s,k) \in {\cal J}_1(\tau)$.  This consideration leads to the 
maintenance of a working set ${\cal J}_w(\tau)\subseteq 
{\cal J}_1(\tau)$ and a remove-and-insert strategy that is
embedded in an inner loop for solving (\ref{eq:reduced MIP final}) 
as a way 
of solving (\ref{eq:reduced MIP}).  {For
$(s,k)\in{\cal J}_w(\tau)$, we include the associated linearized constraints 
in~\eqref{eq:linearized dc} associated with such a pair $(s,k)$ and 
$k^{\prime} \neq k$ in the subproblem.} For fixed $\tau$, initialize 
${\cal J}_w(\tau)=\emptyset$ and solve the subproblem which
we label as ${\cal P}$:
\begin{equation} \label{eq:reduced MIP final removed}
\begin{array}{l}
\underset{\boldsymbol{c} \, \in \, {\cal C}; \, \boldsymbol{z}; 
\, \boldsymbol{t}}{\mbox{\bf minimize}} \ \displaystyle{
\frac{1}{N}
} \, \displaystyle{
\sum_{(s,k) \in {\cal J}_c^{\, \prime}(\tau)}
} \, t_{sk} + \displaystyle{
\frac{1}{N}
} \, \displaystyle{
\sum_{(s,k) \in {\cal J}_1(\tau)}
} \ \| \, \xi^s - c^k \, \|_p^p  \\ [0.2in]
\mbox{\bf subject to} \ \displaystyle{
\sum_{k=1}^K
} \, z_{sk} = 1 \, \left( \, = \, \displaystyle{
\sum_{k\, \mid \, (s,k) \in {\cal J}_c^{\, \prime}(\tau)}
} \, z_{sk} \, \right) \hspace{0.25in} \forall \, 
s \, \in \, {\cal J}_s^{\, \prime}(\tau) \\ [0.2in]
\left\{ \begin{array}{l}
\mbox{for all $(s,k) \, \in \, {\cal J}_c^{\, \prime}(\tau)$:} \\ [0.1in]
t_{sk} \, \geq \, \| \, \xi^s - c^k \, \|_p^p - M(1 - z_{sk});
\epc t_{sk} \, \geq \, 0 \\ [0.1in]
\| \, \xi^s - c^k \, \|_1 - \displaystyle{
\sum_{j=1}^d
} \, \bar{\sigma}_{sk'j} \, ( \, \xi^s_j - c^{k^{\prime}}_j \, ) \, 
\leq \, M \, (1 - z_{sk}), \quad \forall \, k^{\prime} \, \neq \, k 
\\ [0.2in]
z_{sk} \, \in \, \{ \, 0,1 \, \}
\end{array} \right\} \\ [0.55in]
\| \, \xi^s - c^k \, \|_1 \, \leq \, \displaystyle{
\sum_{j=1}^d
} \, \bar{\sigma}_{sk'j} \, ( \, \xi^s_j - c^{k^{\prime}}_j \, )
\epc \forall \, (s,k) \, \in \, {\cal J}_w(\tau) \
\ \mbox{(a subset of ${\cal J}_1(\tau)$)}, \ k'\neq k \\ [0.2in]
z_{sk} \, = \, 1, \ (s,k) \, \in \, {\cal J}_1(\tau); 
\ \mbox{ \bf and } \
z_{sk} \, = \, 0, \ (s,k) \, \in \, {\cal J}_0^{\, \prime}(\tau).
\end{array}
\end{equation}
The solution to the above problem gives an iterate $\boldsymbol{\wh{c}}$. We then determine a 
corresponding $\boldsymbol{\wh{z}}$ by solving (\ref{eq:pq}) 
with $\boldsymbol{c}$ fixed
at $\boldsymbol{\wh{c}}$. Define
\[
{\cal K}_s(\widehat{\boldsymbol c}) \, \triangleq \, \displaystyle{
\operatornamewithlimits{\mbox{\bf argmin}}_{k\in[K]}
} \, \lVert \, \xi^s-\widehat c^{\, k} \, \rVert_1
\]
and choose $\widehat z_{sk}=1$ for one index
$
k \, \in \, \displaystyle{
\operatornamewithlimits{\mbox{\bf argmin}}_{k^{\prime} \in
{\cal K}_s(\widehat{\boldsymbol c})}
} \, \lVert \, \xi^s-\widehat c^{\, k^{\prime}} \, \rVert_p^p,$
and set $\widehat z_{sk^{\prime}}=0$ for all $k^{\prime} \neq k$. 
The obtained pair 
$(\boldsymbol{\wh{c}},\boldsymbol{\wh{z}})$ is feasible 
to (\ref{eq:pq}).  If
\begin{equation}
\label{eq:sufficient-decrease}
F_{p, 1}(\boldsymbol{\wh{c}},\boldsymbol{\wh{z}}) < F_{p, 1}(\boldsymbol{\bar{c}}, \boldsymbol{\bar{z}}),
\end{equation}
the candidate $(\boldsymbol{\wh{c}},\boldsymbol{\wh{z}})$ is 
accepted and exit the current PIP subproblem 
(\ref{eq:reduced MIP exposed}).  Otherwise, insert the violated
constraints into the current ${\cal J}_w(\tau)$. Denote the set 
of invalid fixed assignments
\begin{equation}\label{eq:working-violations}
{\cal V}(\widehat{\boldsymbol c};\tau) \, \triangleq \,
\left\{ \, (s,k)\in{\cal J}_1(\tau) \, \mid \,
\lVert \, \xi^s-\widehat c^k \, \rVert_1 \, > \, 
\min_{k^{\prime} \ne k} \lVert \, \xi^s-\widehat c^{k^{\prime}}
\, \rVert_1 \, \right\}.
\end{equation}
If this set is nonempty, update
\begin{equation}\label{eq:working-update}
 {\cal J}_w(\tau)
 \leftarrow{\cal J}_w(\tau) \, \cup \,
 {\cal V}(\widehat{\boldsymbol c};\tau)
\end{equation}
and resolve~\eqref{eq:reduced MIP final removed} without changing $\tau$.
If the set is empty and the sufficient decrease is not satisfied, the working set
loop terminates, and proceed to the outer procedure with an increasing value $\tau$. 
Note that, with the 
working set, there is no need to solve the full
inner approximation unless eventually
${\cal J}_w(\tau)={\cal J}_1(\tau)$. The next proposition 
gives the finite termination of the working set update.

\begin{proposition}[Finite termination of the working set loop] \rm
\label{prop:working-finite}
Fix $\tau>0$ and the incumbent $\bar{\boldsymbol c}$.  
Suppose that every instance
of~\eqref{eq:reduced MIP final removed} is solved to global 
optimality.  Then
the working set loop either accepts a point satisfying
\eqref{eq:sufficient-decrease} or terminates after at most
$|{\cal J}_1(\tau)|+1$ solves.
\end{proposition}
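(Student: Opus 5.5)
The argument is a monotonicity (pigeonhole) argument on the working set ${\cal J}_w(\tau)$, which only grows and always stays inside the finite set ${\cal J}_1(\tau)$. Fix $\tau$ and the incumbent $(\bar{\boldsymbol c},\bar{\boldsymbol z})$, and index the solves of \eqref{eq:reduced MIP final removed} by $r=1,2,\dots$. Write ${\cal J}_w^{(r)}$ for the working set used in solve $r$, with ${\cal J}_w^{(1)}=\emptyset$, and $\widehat{\boldsymbol c}^{(r)}$ for the center part of its global solution.

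First, each solve is well defined. The incumbent triple $(\bar{\boldsymbol c},\bar{\boldsymbol z},\bar{\boldsymbol t})$ is feasible for \eqref{eq:reduced MIP final removed} for every working set ${\cal J}_w\subseteq{\cal J}_1(\tau)$. To see this, use Proposition~\ref{pr:feasibility}: the fixings agree with $\bar{\boldsymbol z}$, since ${\cal J}_1(\tau)\subseteq{\cal J}_<(\bar{\boldsymbol c})$ and ${\cal J}_0(\tau)\subseteq{\cal J}_>(\bar{\boldsymbol c})$. The linearized constraints also hold at $\bar{\boldsymbol c}$, because $\sum_j\bar\sigma_{sk'j}(\xi^s_j-\bar c^{k'}_j)=\|\xi^s-\bar c^{k'}\|_1$ at the reference point. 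So each instance has a nonempty feasible set, and the assumed global solver returns some $\widehat{\boldsymbol c}^{(r)}$. Feasibility is the only property needed here; the termination count does not depend on optimality beyond the existence of an output.

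Next, I show that the loop either stops or strictly enlarges the working set. After solve $r$ there are three cases. (a) Condition \eqref{eq:sufficient-decrease} holds, and the point is accepted. (b) ${\cal V}(\widehat{\boldsymbol c}^{(r)};\tau)=\emptyset$ without sufficient decrease, and the loop terminates. (c) Otherwise ${\cal J}_w^{(r+1)}={\cal J}_w^{(r)}\cup{\cal V}(\widehat{\boldsymbol c}^{(r)};\tau)$. In case (c) I claim ${\cal V}(\widehat{\boldsymbol c}^{(r)};\tau)\cap{\cal J}_w^{(r)}=\emptyset$, so the working set gains at least one element. Take $(s,k)\in{\cal J}_w^{(r)}$. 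Then $\widehat{\boldsymbol c}^{(r)}$ satisfies $\|\xi^s-\widehat c^{k}\|_1\le\sum_j\bar\sigma_{sk'j}(\xi^s_j-\widehat c^{k'}_j)$ for all $k'\neq k$. By the lower bound \eqref{eq:norm lower bound}, the right-hand side is at most $\|\xi^s-\widehat c^{k'}\|_1$. Hence $\|\xi^s-\widehat c^k\|_1\le\min_{k'\ne k}\|\xi^s-\widehat c^{k'}\|_1$, so $(s,k)\notin{\cal V}$. The nonempty set ${\cal V}\subseteq{\cal J}_1(\tau)$ therefore contributes a new element, and ${\cal J}_w^{(r)}\subsetneq{\cal J}_w^{(r+1)}\subseteq{\cal J}_1(\tau)$.

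The counting step follows. Case (c) strictly increases $|{\cal J}_w|$ starting from $0$, and $|{\cal J}_w|\le|{\cal J}_1(\tau)|$, so case (c) can occur at most $|{\cal J}_1(\tau)|$ times. After at most that many updates, the next solve must fall into case (a) or (b). The loop thus stops, with acceptance or termination, within $|{\cal J}_1(\tau)|+1$ solves. In particular, if ${\cal J}_w={\cal J}_1(\tau)$ is ever reached, the disjointness claim forces ${\cal V}=\emptyset$ at the next solve.

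The main point needing care is the disjointness claim, since it is the only place where the linearization matters. The key fact is that \eqref{eq:norm lower bound} is a valid global lower bound for $\|\xi^s-c^{k'}\|_1$ at every $\boldsymbol c$, not only near $\bar{\boldsymbol c}$. Consequently the linearized constraint is an inner approximation that implies true $\ell_1$ nearest-center membership for $(s,k)$, and an included pair can never reappear as a violation.
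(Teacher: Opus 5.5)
Your proof is correct and follows essentially the same route as the paper. The linearized working-set constraints, combined with the global lower bound \eqref{eq:norm lower bound}, show that no pair already in ${\cal J}_w(\tau)$ can reappear in ${\cal V}(\widehat{\boldsymbol c};\tau)$. Hence each unsuccessful pass strictly enlarges ${\cal J}_w(\tau)\subseteq{\cal J}_1(\tau)$, which gives the bound of $|{\cal J}_1(\tau)|+1$ solves. The paper leaves implicit two points you make explicitly, and both are correct: the incumbent is feasible for every instance, so each solve is well defined, and the counting uses only feasibility of the returned point, not its optimality.
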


\begin{proof}
If $(s,k)\in{\cal J}_w(\tau)$, then~\eqref{eq:linearized dc} and
\eqref{eq:norm lower bound} imply
\[
 \lVert\xi^s-\widehat c^k\rVert_1
 \leq
 \sum_{j=1}^d\bar\sigma_{sk'j}
      (\xi_j^s-\widehat c_j^{k'})
 \leq\lVert\xi^s-\widehat c^{k'}\rVert_1
 \quad\text{for every }k'\ne k.
\]
Hence an index already in ${\cal J}_w(\tau)$ cannot belong to
${\cal V}(\widehat{\boldsymbol c};\tau)$.  Every unsuccessful 
iteration with a nonempty violation set therefore adds at least 
one new member of the finite
set ${\cal J}_1(\tau)$.  After at most $|{\cal J}_1(\tau)|$ 
iterations,
the next solve must lead either to acceptance or to an empty 
violation set.
\end{proof}

% \subsection{Details of PIP algorithm for the $\ell_{p, 1}$
% mixed-norm clustering model}
%
We now assemble the above discussion into the complete Algorithm~1
and make some remarks about its implementation.

\begin{algorithm}[t]
\caption{working set (inner) loop at a fixed $\tau$}
\label{alg:innerloop}
\begin{algorithmic}[1]
\Require Incumbent $(\bar{\boldsymbol c},\bar{\boldsymbol z})$,
         $\tau>0$
\Ensure  A pair $(\boldsymbol{\wh c},\boldsymbol{\wh z})$ and a status
         $\in\{\textsc{descent},\textsc{stable}\}$
\State $\bigl({\cal J}_1(\tau),{\cal J}_0^{\,\prime}(\tau),
       {\cal J}_c^{\,\prime}(\tau),{\cal J}_s^{\,\prime}(\tau)\bigr)
       \leftarrow\textsc{Partition}(\bar{\boldsymbol c},\tau)$
       \Comment{Section~\ref{subsec:temperature-fixing}}
\State $\bar\sigma_{sk^{\prime}j}\leftarrow
       \operatorname{sign}(\xi^s_j-\bar c^{\,k^{\prime}}_j)$
\State build ${\cal P}$, the instance
       of~\eqref{eq:reduced MIP final removed} determined by these index sets
       and $\bar\sigma$;\; ${\cal J}_w(\tau)\leftarrow\emptyset$
\Loop
       \Comment{terminates by Proposition~\ref{prop:working-finite}}
  \State $(\boldsymbol{\wh c},\boldsymbol{\wh z}^{\,w})
          \leftarrow\textsc{Solve}({\cal P})$
         \Comment{warm started}
  \State $\boldsymbol{\wh z}\leftarrow
          \ell_1\text{-}\textsc{Assign}(\boldsymbol{\wh c})$
  \If{$F_{p,1}(\boldsymbol{\wh c},\boldsymbol{\wh z})
       <F_{p,1}(\bar{\boldsymbol c},\bar{\boldsymbol z})$}
      \Comment{\eqref{eq:sufficient-decrease}}
    \State \Return $(\boldsymbol{\wh c},\boldsymbol{\wh z})$,
           \textsc{descent}
  \EndIf
  \State ${\cal V}(\boldsymbol{\wh c};\tau)\leftarrow
          \bigl\{(s,k)\in{\cal J}_1(\tau):
          \lVert\xi^s-\wh c^{\,k}\rVert_1>
          \min_{k^{\prime}\ne k}
          \lVert\xi^s-\wh c^{\,k^{\prime}}\rVert_1\bigr\}$
         \Comment{cf.~\eqref{eq:working-violations}}
  \If{${\cal V}(\boldsymbol{\wh c};\tau)=\emptyset$}
    \State \Return $(\bar{\boldsymbol c},\bar{\boldsymbol z})$,
           \textsc{stable}
  \EndIf
  \State append to ${\cal P}$ the constraints~\eqref{eq:linearized dc} indexed
         by ${\cal V}(\boldsymbol{\wh c};\tau)\setminus{\cal J}_w(\tau)$;\;
         ${\cal J}_w(\tau)\leftarrow{\cal J}_w(\tau)\cup
          {\cal V}(\boldsymbol{\wh c};\tau)$
         \Comment{row generation, cf.~\eqref{eq:working-update}}
\EndLoop
\end{algorithmic}
\end{algorithm}

\begin{algorithm}[t]
\caption{A progressive integer programming algorithm for the $\ell_{p,1}$
mixed-norm clustering model}
\label{alg:pip}
\begin{algorithmic}[1]
\Require Data $\Xi=\{\xi^s\}_{s=1}^N$, clusters $K$, warm start
         $\bar{\boldsymbol c}^{\,(0)}$, schedule
         $\tau_0,\Delta\tau,\tau_{\max}$
\Ensure  Centroids $\bar{\boldsymbol c}$, assignments $\bar{\boldsymbol z}$
\State $\bar{\boldsymbol c}\leftarrow\bar{\boldsymbol c}^{\,(0)}$;\;
       $\bar{\boldsymbol z}\leftarrow
        \ell_1\text{-}\textsc{Assign}(\bar{\boldsymbol c})$;\;
       $\tau\leftarrow\tau_0$
\While{$\tau\le\tau_{\max}$}
  \Repeat
    \State $\bigl((\boldsymbol{\wh c},\boldsymbol{\wh z}),
           \textit{status}\bigr)\leftarrow$
           Algorithm~\ref{alg:innerloop}$\,
           (\bar{\boldsymbol c},\bar{\boldsymbol z},\tau)$
    \If{$\textit{status}=\textsc{descent}$}
      \State $\bar{\boldsymbol c}\leftarrow\boldsymbol{\wh c}$;\;
             $\bar{\boldsymbol z}\leftarrow\boldsymbol{\wh z}$
    \EndIf
  \Until{$\textit{status}=\textsc{stable}$}
  \State $\tau\leftarrow\tau+\Delta\tau$
\EndWhile
\State \Return $\bar{\boldsymbol c},\bar{\boldsymbol z}$
\end{algorithmic}
\end{algorithm}

\begin{remark}[Partition, sign, and the subproblem] \rm
\label{rem:partition}
$\textsc{Partition}(\bar{\boldsymbol c},\tau)$ refers to the construction of
Section~\ref{subsec:temperature-fixing}: the softmax probabilities
$\rho_{sk}(\tau)$ of~\eqref{eq:softmax}
determine ${\cal J}_1(\tau),{\cal J}_0^{\,\prime}(\tau),
{\cal J}_c^{\,\prime}(\tau)$ and ${\cal J}_s^{\,\prime}(\tau)$, fixing every
binary variable outside ${\cal J}_c^{\,\prime}(\tau)$.  The sign pattern
$\bar\sigma_{sk^{\prime}j}=\operatorname{sign}(\xi^s_j-\bar c^{\,k^{\prime}}_j)$
supplies the linear under-estimates in~\eqref{eq:linearized dc}, which are exact
at $\boldsymbol c=\bar{\boldsymbol c}$. The
subproblem ${\cal P}$ is the instance
of~\eqref{eq:reduced MIP final removed} carried by these index sets.
\end{remark}

\begin{remark}[The assignment map and the descent test] \rm
\label{rem:assign}
$\ell_1\text{-}\textsc{Assign}(\boldsymbol c)$ returns the assignment
constructed in Section~\ref{subsec:working set}: each $\xi^s$ is given an
$\ell_1$-nearest center, ties being resolved by the smaller
$\lVert\xi^s-c^{k}\rVert_p^p$.  The point of applying it before the test
in~\eqref{eq:sufficient-decrease} is that the comparison must be made in the
original problem.  The binary vector $\boldsymbol{\wh z}^{\,w}$ returned
by~\eqref{eq:reduced MIP final removed} need not satisfy the nearest-center
condition of~\eqref{eq:pq}, since the constraints indexed by
${\cal V}(\boldsymbol{\wh c};\tau)$ may be violated, so $v_w$ is in general not
the value of any feasible clustering.  Replacing
$\boldsymbol{\wh z}^{\,w}$ by $\boldsymbol{\wh z}$ restores feasibility, and
both sides of~\eqref{eq:sufficient-decrease} are then values of $F_{p,1}$ at
points feasible to~\eqref{eq:pq}: the test compares the true objective under the
true assignment, not the value reported by the subproblem.
\end{remark}

\begin{remark}[Early termination in the implementation] \rm
\label{rem:early-termination}
Algorithms~\ref{alg:innerloop} and~\ref{alg:pip} carry no computational
budgets.  The implementation imposes two.  The row-generation loop is capped
at $T$ iterations; by Proposition~\ref{prop:working-finite} any
$T\ge|{\cal J}_1(\tau)|+1$ reproduces the behavior analyzed above, while a
smaller $T$ may exit with ${\cal V}(\boldsymbol{\wh c};\tau)\ne\emptyset$.
%The number of descents at each $\tau<\tau_{\max}$ is capped at $r_{\max}$, so that the lower levels serve as a warm-up rather than being driven to stability.%  
Each instance of~\eqref{eq:reduced MIP final removed} is solved
subject to a time limit, and~\eqref{eq:sufficient-decrease} is tested
with a tolerance $\delta = 10^{-6}$, that is, a candidate is accepted when
$F_{p,1}(\boldsymbol{\wh c},\boldsymbol{\wh z})
\le F_{p,1}(\bar{\boldsymbol c},\bar{\boldsymbol z})-\delta$.
\end{remark}

\begin{proposition}[Feasibility of the iterates] \rm
\label{prop:feasible}
Every pair returned by Algorithm~\ref{alg:innerloop} is feasible
to~\eqref{eq:pq}. Consequently every incumbent maintained by
Algorithm~\ref{alg:pip}, and in particular the pair it returns, is feasible
to~\eqref{eq:pq}.
\end{proposition}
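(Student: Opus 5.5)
The plan is an induction on the number of calls to Algorithm~\ref{alg:innerloop}, built on one observation about its two return branches.

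\emph{The \textsc{stable} branch.} Here Algorithm~\ref{alg:innerloop} returns the incumbent $(\bar{\boldsymbol c},\bar{\boldsymbol z})$ that it received as input. So whenever the input is feasible to~\eqref{eq:pq}, this output is feasible as well.

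\emph{The \textsc{descent} branch.} Here the returned pair is $(\widehat{\boldsymbol c},\widehat{\boldsymbol z})$ with $\widehat{\boldsymbol z}=\ell_1\text{-}\textsc{Assign}(\widehat{\boldsymbol c})$. We check the constraints of~\eqref{eq:pq} with $q=1$ one at a time.

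First, $\widehat{\boldsymbol c}$ is returned by \textsc{Solve}$({\cal P})$, and ${\cal P}$ carries the constraint $\boldsymbol c\in{\cal C}$. Hence $\widehat{\boldsymbol c}\in{\cal C}$, so the bounded-center requirement behind the big-$M$ reformulation~\eqref{eq:MIP} also holds.

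Second, for each $s$, the set ${\cal K}_s(\widehat{\boldsymbol c})$ is the argmin of finitely many reals. It is therefore nonempty, and the secondary argmin over it of $\lVert\xi^s-\widehat c^{\,k}\rVert_p^p$ is nonempty too. Thus exactly one index $k_s$ is chosen, which gives $\widehat z_{sk}\in\{0,1\}$ and $\sum_k\widehat z_{sk}=1$.

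Third, the Heaviside constraint only has content when $\widehat z_{sk}=1$, that is, when $k=k_s$. Since $k_s\in{\cal K}_s(\widehat{\boldsymbol c})$, we have $\lVert\xi^s-\widehat c^{\,k_s}\rVert_1\le\min_{k'\ne k_s}\lVert\xi^s-\widehat c^{\,k'}\rVert_1$. The argument of $\onebld_{[0,\infty)}$ is therefore nonnegative, so the right-hand side equals $1$. When $\widehat z_{sk}=0$ the constraint holds trivially.

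These three checks give feasibility of the returned pair in the \textsc{descent} case. It does not matter whether the auxiliary $\widehat{\boldsymbol z}^{\,w}$ from the subproblem was feasible, as Remark~\ref{rem:assign} notes.

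\emph{Induction for Algorithm~\ref{alg:pip}.} The initial incumbent is $(\bar{\boldsymbol c}^{(0)},\ell_1\text{-}\textsc{Assign}(\bar{\boldsymbol c}^{(0)}))$. By the same assignment argument it is feasible, assuming the warm start $\bar{\boldsymbol c}^{(0)}$ lies in ${\cal C}$; I would state this as the standing requirement on the warm start. The incumbent changes only on \textsc{descent}, when it is replaced by a pair returned from Algorithm~\ref{alg:innerloop}, and that pair is feasible by the analysis above. By induction on the number of inner calls, every incumbent is feasible, including the one returned on exit.

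\emph{Main obstacle.} The only delicate point is whether ${\cal C}$ is part of the feasibility notion for~\eqref{eq:pq}, since~\eqref{eq:pq} itself does not list ${\cal C}$. I would handle this by noting that both the warm start and every subproblem solution lie in ${\cal C}$, so the claim holds in either reading.
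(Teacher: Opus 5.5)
Your proposal is correct and follows the paper's proof essentially step for step. The \textsc{descent} branch is feasible because $\ell_1$-\textsc{Assign} picks exactly one $\ell_1$-nearest center per sample, and the \textsc{stable} branch returns the incumbent. Induction over the calls made by Algorithm~\ref{alg:pip}, starting from $\ell_1$-\textsc{Assign}$(\bar{\boldsymbol c}^{\,(0)})$, then finishes the argument.

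Your detour through ${\cal C}$ is harmless but unnecessary. Problem \eqref{eq:pq} imposes no constraint on the centers, which is why the paper says feasibility holds irrespective of $\widehat{\boldsymbol c}$ and needs no assumption on the warm start.
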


\begin{proof}
On the \textsc{descent} exit the returned pair is
$(\boldsymbol{\wh c},\boldsymbol{\wh z})$ with
$\boldsymbol{\wh z}=\ell_1\text{-}\textsc{Assign}(\boldsymbol{\wh c})$, which
assigns each $\xi^s$ to a cluster attaining
$\min_{k\in[K]}\lVert\xi^s-\wh c^{\,k}\rVert_1$ and satisfies
$\sum_k\wh z_{sk}=1$; hence
$(\boldsymbol{\wh c},\boldsymbol{\wh z})$ is feasible to~\eqref{eq:pq}
irrespective of $\boldsymbol{\wh c}$.  On the \textsc{stable} exit the returned
pair is the incumbent.  Algorithm~\ref{alg:pip} initializes the incumbent as
$\bigl(\bar{\boldsymbol c}^{\,(0)},
\ell_1\text{-}\textsc{Assign}(\bar{\boldsymbol c}^{\,(0)})\bigr)$, feasible for
the same reason, and replaces it only by a pair returned with status
\textsc{descent}.  The claim follows by induction on the passes.
\end{proof}

Proposition~\ref{prop:working-finite} concerns a single pass of the
working set loop.  Assembling it with the schedule of
Algorithm~\ref{alg:pip} yields the following termination and optimality
property of the method as a whole.

\begin{proposition}[Fixed point at a stable pass] \rm
\label{prop:fixed-point-pass}
Fix $\tau>0$, let $(\bar{\boldsymbol c},\bar{\boldsymbol z})$ be the incumbent
at which Algorithm~\ref{alg:innerloop} is called, and set
$\bar t_{sk}=\bar z_{sk}\lVert\xi^s-\bar c^{\,k}\rVert_p^p$.  Suppose the call
returns \textsc{stable}, and let
$(\boldsymbol{\wh c},\boldsymbol{\wh z}^{\,w})$ be the solution of its
terminating instance of~\eqref{eq:reduced MIP final removed}, $v_w$ be the optimal
value of that instance, and
$\boldsymbol{\wh z}=\ell_1\text{-}\textsc{Assign}(\boldsymbol{\wh c})$.  If that
instance is solved to global optimality, then
\[
 F_{p,1}(\boldsymbol{\wh c},\boldsymbol{\wh z})\;=\;v_w\;=\;
 v_{\eqref{eq:reduced MIP final}}\;=\;
 F_{p,1}(\bar{\boldsymbol c},\bar{\boldsymbol z}),
\]
and $(\bar{\boldsymbol c},\bar{\boldsymbol z},\bar{\boldsymbol t})$ is globally
optimal for~\eqref{eq:reduced MIP final} at
$\bigl({\cal J}_1(\tau),{\cal J}_0^{\,\prime}(\tau)\bigr)$; that is, statement
{\rm (A)} of Theorem~\ref{th:fixed-point localmin linearized} holds.
\end{proposition}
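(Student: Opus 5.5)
The plan is to sandwich all four quantities between $F_{p,1}(\bar{\boldsymbol c},\bar{\boldsymbol z})$ and itself. I will establish the chain
\[
F_{p,1}(\bar{\boldsymbol c},\bar{\boldsymbol z})\;\le\;F_{p,1}(\boldsymbol{\wh c},\boldsymbol{\wh z})\;\le\;v_w\;\le\;v_{\eqref{eq:reduced MIP final}}\;\le\;F_{p,1}(\bar{\boldsymbol c},\bar{\boldsymbol z}).
\]
Once this chain holds, all the quantities coincide. Since $(\bar{\boldsymbol c},\bar{\boldsymbol z},\bar{\boldsymbol t})$ is feasible for \eqref{eq:reduced MIP final} and attains its optimal value, it is globally optimal, which is statement (A) of Theorem~\ref{th:fixed-point localmin linearized}.

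\textbf{Right end: feasibility of the incumbent.} By Proposition~\ref{pr:feasibility}, ${\cal J}_1(\tau)\subseteq{\cal J}_<(\bar{\boldsymbol c})$ and ${\cal J}_0(\tau)\subseteq{\cal J}_>(\bar{\boldsymbol c})$, the $\geq$ version being used when $K>2$. Hence \eqref{eq:implications} gives $\bar z_{sk}=1$ on ${\cal J}_1(\tau)$ and $\bar z_{sk}=0$ on ${\cal J}_0(\tau)$. The assignment equation then gives $\bar z_{sk}=0$ on the rest of ${\cal J}_0^{\,\prime}(\tau)$. Next, the linear minorant \eqref{eq:norm lower bound} is exact at $\boldsymbol c=\bar{\boldsymbol c}$, so each linearized constraint at $\bar{\boldsymbol c}$ reduces to the original $\ell_1$ nearest-center constraint, which $(\bar{\boldsymbol c},\bar{\boldsymbol z})$ satisfies. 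The big-$M$ constraints hold with $\bar t_{sk}=\bar z_{sk}\lVert\xi^s-\bar c^k\rVert_p^p$. Therefore the incumbent triple is feasible for \eqref{eq:reduced MIP final}, and its objective value there equals $F_{p,1}(\bar{\boldsymbol c},\bar{\boldsymbol z})$. This gives $v_{\eqref{eq:reduced MIP final}}\le F_{p,1}(\bar{\boldsymbol c},\bar{\boldsymbol z})$. The terminating instance ${\cal P}$ of \eqref{eq:reduced MIP final removed} keeps only the rows indexed by ${\cal J}_w(\tau)\subseteq{\cal J}_1(\tau)$, so it is a relaxation of \eqref{eq:reduced MIP final}. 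Since it is solved globally, $v_w\le v_{\eqref{eq:reduced MIP final}}$.

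\textbf{Middle link: the key step.} I must show $F_{p,1}(\boldsymbol{\wh c},\boldsymbol{\wh z})\le v_w$, and I expect this to be the main obstacle. The solver's $\boldsymbol{\wh z}^{\,w}$ is not a priori a nearest-center assignment at $\boldsymbol{\wh c}$, so I will argue it pair by pair.
\begin{itemize}
\item If $(s,k)\in{\cal J}_c^{\,\prime}(\tau)$ with $\wh z^{\,w}_{sk}=1$, the big-$M$ linearized row combined with \eqref{eq:norm lower bound} gives $\lVert\xi^s-\wh c^k\rVert_1\le\lVert\xi^s-\wh c^{k'}\rVert_1$ for all $k'\ne k$.
\item If $(s,k)\in{\cal J}_1(\tau)$, the \textsc{stable} exit means ${\cal V}(\boldsymbol{\wh c};\tau)=\emptyset$. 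Because \eqref{eq:working-violations} uses a strict inequality, this gives $\lVert\xi^s-\wh c^k\rVert_1\le\min_{k'\ne k}\lVert\xi^s-\wh c^{k'}\rVert_1$, with ties permitted.
\end{itemize}
Thus $\boldsymbol{\wh z}^{\,w}$ selects, for every $s$, an $\ell_1$-nearest center at $\boldsymbol{\wh c}$, so $(\boldsymbol{\wh c},\boldsymbol{\wh z}^{\,w})$ is feasible to \eqref{eq:pq}. The inequalities $t_{sk}\ge z_{sk}\lVert\xi^s-c^k\rVert_p^p$ (via big-$M$ when $z_{sk}=1$ and $t_{sk}\ge0$ otherwise), together with the fixings, give $F_{p,1}(\boldsymbol{\wh c},\boldsymbol{\wh z}^{\,w})\le v_w$. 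Finally, $\ell_1$-\textsc{Assign} picks, among the $\ell_1$-nearest centers ${\cal K}_s(\boldsymbol{\wh c})$, one minimizing $\lVert\xi^s-\wh c^k\rVert_p^p$. Hence $F_{p,1}(\boldsymbol{\wh c},\boldsymbol{\wh z})\le F_{p,1}(\boldsymbol{\wh c},\boldsymbol{\wh z}^{\,w})$.

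\textbf{Left end.} A \textsc{stable} return means the descent test \eqref{eq:sufficient-decrease} failed, i.e.\ $F_{p,1}(\boldsymbol{\wh c},\boldsymbol{\wh z})\ge F_{p,1}(\bar{\boldsymbol c},\bar{\boldsymbol z})$. This holds in the exact, tolerance-free version analyzed in Algorithm~\ref{alg:innerloop}. Combining the three parts closes the chain and proves the claimed equalities and global optimality.
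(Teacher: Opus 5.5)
Your proof is correct and follows the same sandwich argument as the paper: the incumbent is feasible for \eqref{eq:reduced MIP final} because the linearization is exact at $\bar{\boldsymbol c}$, the terminating instance is a relaxation so $v_w\le v_{\eqref{eq:reduced MIP final}}$, the empty violation set together with the big-$M$ rows makes $(\wh{\boldsymbol c},\wh{\boldsymbol z}^{\,w})$ feasible for \eqref{eq:pq}, and the failed descent test closes the chain. Your differences from the paper are minor refinements rather than a different route: you use the inequality $F_{p,1}(\wh{\boldsymbol c},\wh{\boldsymbol z}^{\,w})\le v_w$ where the paper asserts equality (the inequality suffices and avoids relying on the big-$M$ bound at $\wh{\boldsymbol c}$), you justify explicitly via Proposition~\ref{pr:feasibility} that the incumbent respects the fixings, and you note that the left end needs the tolerance-free descent test.
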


\begin{proof}
Since $\bar\sigma_{sk^{\prime}j}=\operatorname{sign}(\xi^s_j-\bar c^{\,k^{\prime}}_j)$,
the linearization~\eqref{eq:linearized dc} is exact at
$\boldsymbol c=\bar{\boldsymbol c}$, so
$(\bar{\boldsymbol c},\bar{\boldsymbol z},\bar{\boldsymbol t})$ is feasible for
both~\eqref{eq:reduced MIP final removed} and~\eqref{eq:reduced MIP final} with
objective value $F_{p,1}(\bar{\boldsymbol c},\bar{\boldsymbol z})$.  By
Proposition~\ref{prop:working-finite} the \textsc{stable} exit gives
${\cal V}(\boldsymbol{\wh c};\tau)=\emptyset$, so the pairs of
${\cal J}_1(\tau)$ satisfy the nearest-center inequalities
through~\eqref{eq:working-violations} and the free pairs through the big-$M$
constraints and~\eqref{eq:norm lower bound}; hence
$(\boldsymbol{\wh c},\boldsymbol{\wh z}^{\,w})$ is feasible to~\eqref{eq:pq}
and, by global optimality of the terminating solve,
$v_w=F_{p,1}(\boldsymbol{\wh c},\boldsymbol{\wh z}^{\,w})$.  As
$\boldsymbol{\wh z}$ minimizes $F_{p,1}(\boldsymbol{\wh c},\cdot)$ over the
assignments feasible to~\eqref{eq:pq},
\[
 F_{p,1}(\boldsymbol{\wh c},\boldsymbol{\wh z})\;\le\;
 F_{p,1}(\boldsymbol{\wh c},\boldsymbol{\wh z}^{\,w})\;=\;v_w\;\le\;
 v_{\eqref{eq:reduced MIP final}}\;\le\;
 F_{p,1}(\bar{\boldsymbol c},\bar{\boldsymbol z}),
\]
the third inequality because~\eqref{eq:reduced MIP final} carries all
constraints of~\eqref{eq:reduced MIP final removed} and more.  The
\textsc{stable} status means~\eqref{eq:sufficient-decrease} failed, i.e.
$F_{p,1}(\boldsymbol{\wh c},\boldsymbol{\wh z})\ge
F_{p,1}(\bar{\boldsymbol c},\bar{\boldsymbol z})$, so all four quantities
coincide and $(\bar{\boldsymbol c},\bar{\boldsymbol z},\bar{\boldsymbol t})$
attains the optimal value of~\eqref{eq:reduced MIP final}.
\end{proof}

\begin{proposition}[Convergence and optimality of Algorithm~\ref{alg:pip}] \rm
\label{prop:pip-convergence}
Suppose every solve of~\eqref{eq:reduced MIP final removed} returns a point that
is feasible for its instance and depends only on that instance.  Then
Algorithm~\ref{alg:pip} terminates with its last call to
Algorithm~\ref{alg:innerloop} returning \textsc{stable} at $\tau=\tau_{\max}$,
and its output $(\bar{\boldsymbol c},\bar{\boldsymbol z})$ is feasible
to~\eqref{eq:pq}.  If moreover the terminating instance is solved to global
optimality, then $(\bar{\boldsymbol c},\bar{\boldsymbol z},
\bar{\boldsymbol t})$, with
$\bar t_{sk}=\bar z_{sk}\lVert\xi^s-\bar c^{\,k}\rVert_p^p$, is globally optimal
for~\eqref{eq:reduced MIP final} at
$\bigl({\cal J}_1(\tau_{\max}),{\cal J}_0^{\,\prime}(\tau_{\max})\bigr)$, so
that statement {\rm (A)} of
Theorem~\ref{th:fixed-point localmin linearized} holds; and if
$\bar c^{\,k}_j\neq\xi^s_j$ for every $(s,k,j)\in[N]\times[K]\times[d]$, then
$(\bar{\boldsymbol c},\bar{\boldsymbol z})$ is a locally optimal solution
of~\eqref{eq:pq}.
\end{proposition}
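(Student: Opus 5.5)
Our plan has three parts: termination, feasibility, and then the optimality statements, each obtained by chaining the earlier propositions.

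\emph{Termination.} We argue by a monotonicity and finiteness argument on the outer loop of Algorithm~\ref{alg:pip}. For a fixed $\tau$, each call to Algorithm~\ref{alg:innerloop} ends after finitely many solves. This follows from Proposition~\ref{prop:working-finite}, whose proof only uses that constraints already in ${\cal J}_w(\tau)$ are satisfied by the returned point, and so needs feasibility of each returned point, not global optimality. Each \textsc{descent} exit strictly decreases $F_{p,1}$ at the incumbent. The key observation is that the incumbent after a descent is $(\boldsymbol{\wh c},\ell_1\text{-}\textsc{Assign}(\boldsymbol{\wh c}))$, where $\boldsymbol{\wh c}$ is the output of a solve. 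Since the solve output depends only on the instance, the incumbent is determined by the instance. An instance is determined by $\tau$, by the index sets $({\cal J}_1(\tau),{\cal J}_0^{\,\prime}(\tau))$, by the sign pattern $\bar\sigma$ and by the working set. Given $\tau$, each of these is a function of $\bar{\boldsymbol c}$ that takes only finitely many values.

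The main obstacle is to rule out infinitely many distinct descents at a fixed $\tau$. The sign patterns, fixing sets and working sets range over a finite family, so the set of possible instances is finite. Hence the set of possible $\boldsymbol{\wh c}$ values is finite, and so is the set of incumbents after the first descent. Strict decrease of $F_{p,1}$ forbids revisiting an incumbent, so only finitely many descents occur at each $\tau$. The $\tau$-schedule then increases by $\Delta\tau$ up to $\tau_{\max}$, so the \textbf{while} loop runs finitely many times. The last executed \textbf{repeat} block is the one at the largest $\tau\le\tau_{\max}$, and it exits with \textsc{stable}. (If $\tau_{\max}$ is not hit exactly on the grid, we interpret it as the final grid value, as the statement intends.)

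\emph{Feasibility.} This follows directly from Proposition~\ref{prop:feasible}.

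\emph{Optimality.} If the terminating instance is solved globally, Proposition~\ref{prop:fixed-point-pass}, applied to the final \textsc{stable} call, gives global optimality of $(\bar{\boldsymbol c},\bar{\boldsymbol z},\bar{\boldsymbol t})$ for \eqref{eq:reduced MIP final} with the sets $({\cal J}_1(\tau_{\max}),{\cal J}_0^{\,\prime}(\tau_{\max}))$. Proposition~\ref{pr:feasibility} guarantees that these sets satisfy \eqref{eq:index set inclusions}. It gives ${\cal J}_1\subseteq{\cal J}_<$, using the ``$\geq$'' version for $K>2$ and ``$>$'' for $K=2$. It also gives ${\cal J}_0(\tau)\subseteq{\cal J}_>$, because $\rho<1/K-\varepsilon<1/K$. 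The extra pairs in ${\cal J}_0^{\,\prime}$ lie in ${\cal J}_>$ because another index $k$ is strictly nearest. Statement (A) of Theorem~\ref{th:fixed-point localmin linearized} therefore holds. Under the separation hypothesis $\bar c^k_j\neq\xi^s_j$, that theorem gives (A)$\Rightarrow$(B)$\Rightarrow$(C), which is local optimality for \eqref{eq:pq}.
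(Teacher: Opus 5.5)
Your proposal is correct and follows essentially the same route as the paper. Both arguments get termination from finitely many instances per $\tau$-level together with strict descent, then finish with Propositions~\ref{prop:feasible} and~\ref{prop:fixed-point-pass} and (A)$\Rightarrow$(C) of Theorem~\ref{th:fixed-point localmin linearized}. You also supply three details the paper leaves implicit: Proposition~\ref{prop:working-finite} only needs feasibility of each solve, ${\cal J}_1(\tau_{\max})$ and ${\cal J}_0^{\,\prime}(\tau_{\max})$ satisfy \eqref{eq:index set inclusions} by Proposition~\ref{pr:feasibility}, and $\tau_{\max}$ should be read as the last value on the $\tau$-grid.
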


\begin{proof}
Every call to Algorithm~\ref{alg:innerloop} terminates by
Proposition~\ref{prop:working-finite}, returning \textsc{descent} or
\textsc{stable}.  Fix a level $\tau$.  An instance
of~\eqref{eq:reduced MIP final removed} arising there is determined by
$\bigl({\cal J}_1(\tau),{\cal J}_0^{\,\prime}(\tau),
{\cal J}_c^{\,\prime}(\tau),{\cal J}_s^{\,\prime}(\tau),\bar\sigma,
{\cal J}_w(\tau)\bigr)$, each component ranging over a finite set, so only
finitely many instances occur; since the returned point depends only on the
instance, the incumbents reachable at that level form a finite set, on which
$F_{p,1}$ takes finitely many values.  A call returning \textsc{descent}
strictly decreases $F_{p,1}(\bar{\boldsymbol c},\bar{\boldsymbol z})$, so only
finitely many do, and the level ends with a \textsc{stable} call, after which
$\tau$ is advanced by $\Delta\tau$.  As the schedule visits at most
$\lfloor(\tau_{\max}-\tau_0)/\Delta\tau\rfloor+1$ levels, the algorithm
returns, the last call executed being \textsc{stable} at $\tau=\tau_{\max}$.
Feasibility of the output is obtained by Proposition~\ref{prop:feasible}, and applying
Proposition~\ref{prop:fixed-point-pass} to that call gives statement {\rm (A)};
the final claim is {\rm (A)}\,$\Rightarrow$\,{\rm (C)} of
Theorem~\ref{th:fixed-point localmin linearized}.
\end{proof}

\section{Numerical Experiments}
\label{sec: numerical-results}
In this section, we present extensive numerical comparisons to demonstrate the performance of the mixed-norm clustering model and the efficiency of the PIP method. All experiments are run on an Apple M2 MacBook Air (8\,GB RAM)
using {\sc Gurobi}~12.0 as the MIP solver (academic license).
Code is implemented in Python~3 with necessary packages.
Each IP subproblem is solved with a per-call time limit; running times
reported below include all solver overhead.

\subsection{Compared methods, experimental protocol, 
and evaluation metrics} \label{sec:experiment-setup}

To our knowledge, no alternating-minimization scheme has been 
proposed for the mixed-norm ($p\neq q$) clustering objective. 
We therefore introduce a natural
benchmark that transplants the spirit of Lloyd's algorithm 
\cite{Lloyd1982}
($K$-means) and of $K$-medians: alternate between (i) assigning each point to its nearest center
in the assignment norm $\ell_q$, and (ii) recomputing each center as the
optimal cluster representative in the objective norm $\ell_p$. We call it
Alter $L_p/L_q$; e.g.\ Alter $L_2/L_1$ denotes the $p{=}2,\,q{=}1$, and Alter $L_1/L_1$ recovers $k$-medians.

\paragraph{Scheme.}
Starting from initial centers $\mathbf c^{(0)}$ (e.g.\ $k$-means$++$), iterate the
two maps
\begin{align}
  \text{(assignment $\ell_q$)}\quad
  &z_{sk}^{(t)} = 1 \iff
     k \in \displaystyle{
     \operatornamewithlimits{\mbox{\bf argmin}}_{j\in[K]} } \, \bigl\|\xi^s-c^{j,(t)}\bigr\|_q,
     \label{eq:Estep}\\[2pt]
  \text{(update $\ell_p$)}\quad
  &c^{k,(t+1)} = \mu_p\!\Bigl(\{\,\xi^s : z^{(t)}_{sk}=1\,\}\Bigr)
   \;\triangleq \, \displaystyle{
   \operatornamewithlimits{\mbox{\bf argmin}}_{c\in\mathbb{R}^d}
   } \displaystyle{
   \sum_{s:\,z^{(t)}_{sk}=1}
   } \, \bigl\| \, \xi^s- c \, \bigr\|_p^{\,p},
     \label{eq:Mstep}
\end{align}
with ties in \eqref{eq:Estep} broken by a fixed rule 
(e.g. the smallest $k$).

\gap

Here, we want to mention that the iteration of the above 
alternating algorithm may enter a cycle, and convergence even 
to a fixed point is not guaranteed. Because of this, we treat 
Alter $L_p/L_q$ purely as a fast, easily implemented
baseline and warm start, not as a solver. We run it for at most 
$T_{\max} = 100$
iterations, terminating early when the labels are unchanged, 
and return the visited iterate of lowest objective to make the
output well-defined even when descent fails. 
%Each iteration costs $O(NKP)$, so the scheme is cheap. 
It also provides the initialization for our progressive
integer programming (PIP) method.

In the experiments, we compare six methods, including all four 
norm pairs plus two MIP models with $q = 1$ solved by the 
proposed PIP method.  Every alternating method is an instance 
of the scheme introduced above. We summarize the six methods in
the table below:

\begin{center}
\small
\begin{tabular}{l l l l}
\toprule
Method & Assignment ($q$) & Centroid update ($p$) & MIP counterpart \\
\midrule
K-means            & $\ell_2$ & mean ($p{=}2$)   & --- \\
K-medians          & $\ell_1$ & median ($p{=}1$) & PIP ($p{=}q{=}1$) \\
Alter $L_1/L_2$ ($p{=}1,q{=}2$) & $\ell_2$ & median ($p{=}1$) & --- \\
Alter $L_2/L_1$ ($p{=}2,q{=}1$) & $\ell_1$ & mean ($p{=}2$)   & PIP ($p{=}2,q{=}1$) \\
\bottomrule
\end{tabular}
\end{center}

\noindent {\bf Seeds and initialization.}
Each experiment is repeated over five seeds; unless stated 
otherwise, the
standard seed set $\{42, 7, 99, 123, 2024\}$ is used, and the adversarial experiments of Section~\ref{sec:adversarial-initialization} use the dedicated set $\{3, 20, 46, 53, 71\}$ (see the disclosure there).  The seed drives both the data draw and the initialization, so every number below is reproducible.  For each seed, one set of $K$ initial centroids is produced by K-Means++ (scikit-learn's \texttt{kmeans\_plusplus} with
\texttt{random\_state} equal to the seed) or by fixed adversarial
centroids where stated, and given identically to all alternating
methods.

\gap

\noindent {\bf PIP and solver settings.}
PIP ($p{=}q{=}1$) and PIP ($p{=}2,q{=}1$) solve the corresponding
mixed-integer programs by the progressive method of
Section \ref{sec: PIP-q=1}, where the restricted subproblems are 
solved by {\sc Gurobi}.  The cases for $p =1$ and $p = 2$ 
share the $\ell_1$ assignment constraints ($q{=}1$) and differ 
only in the dispersion term (linear for $p{=}1$, quadratic for 
$p=2$).  Each PIP is warm-started
from the alternating method that optimizes the \emph{same} 
objective, so the PIP method will only improve its own objective 
from the heuristic's solution.  In this paper, we focus on the 
cases where $q = 1$, and the
case for $p = 1$ and $q = 2$ is not included yet.
Moreover, unless stated otherwise, all PIP runs use identical 
settings: the softmax
temperature $\tau$ from $0.1$ to $0.5$ in steps of $0.1$, with 
at most $30$
row-generation iterations per level, and a time limit of $30$\,s is set for {\sc Gurobi} to solve a single subproblem.  The only exception is the stress test with large-$K$ in Section \ref{sec:largeK}, where each subproblem is two orders of magnitude larger and we use a reduced budget of $10$ iterations per level with a $60$\,s per-solve limit.

\gap

\noindent {\bf Evaluation metrics.} We evaluate the performance of 
the clustering model employing:

\gap

\noindent \textbf{ARI} (adjusted Rand index) measures agreement 
between the recovered partition and the ground-truth partition, 
corrected for chance: $1$ for a perfect match, $\approx 0$ 
for a random assignment.  It is our primary external metric.

\gap

\noindent $\mathbf{WCD_1} = \sum_{s} \| \xi^s - c^{k(s)} \|_1$, 
the within-cluster distortion in $\ell_1$, where
$c_{k(s)}$ is the centroid to which the method assigns point $s$.
This is the internal objective of the $p{=}1$ family.

\gap

\noindent $\mathbf{WCD_2} = \sum_{s} \| \xi^s - c^{k(s)} \|_2^2$, 
the within-cluster distortion in squared $\ell_2$-norm.  This is
the internal objective of the $p = 2$ family.

\gap

\noindent By default, all metrics are computed over all $N$ points. 
For contaminated datasets, they are computed on the non-outliers, as stated in the
corresponding section.  Reporting both distortions for every method
enables cross-norm comparisons: in particular, whether a method is
inferior at its own objective. 
%The objective values and ARI are seed-dependent; they are reproducible for a given seed but the computational times may vary with for  instance machine load.  So the reported times are only indicative of the relative computational costs. 

\subsection{The performance of the mixed-norm clustering model on noisy data}

In this section, we will demonstrate the advantages of the mixed-norm clustering model \eqref{eq:pq} with $p \neq q$ on noisy data with outliers. The   $\ell_{p,q}$ formulation benefits the data structure when
assignment geometry and the center estimator can be matched separately to different norms, which is shown in the next two experiments.  In general, no single norm pair is expected to dominate across all data settings.  

\subsubsection{Synthetic data with coordinate-sparse outliers}
\label{sec:sparse-corruption}

In this section, we demonstrate the superior performance of the $\ell_{2, 1}$ mixed-norm clustering model on synthetic data with coordinate-sparse outliers. We generate $K=3$ isotropic Gaussian clusters with $d=40$.  The
centers $\mu_1, \mu_2, \mu_3$ are drawn once from
$\mu_k\sim\mathcal{N}(0,\sigma^2I_{40})$, with
$\sigma=0.6$ and a fixed center seed.  For each run seed and each
cluster, we draw $n=30$ points from $\mathcal{N}(\mu_k,I_{40})$ and corrupt a
fraction $\rho=0.30$ of them.  Each corrupted point receives one spike with values 
$\pm 7$, on a uniformly selected coordinate.  The sign is
symmetric, so the coordinate mean remains unbiased.  Thus only one of the
$40$ coordinates is affected for each outlier, and the per-coordinate
contamination rate is $\rho/d\approx0.75\%$.

This experimental design separates the two roles of the norms.  For assignment, $\ell_1$ distance is more robust to outliers by aggregating the spike with the remaining $39$ clean
coordinates, which thus favors $q=1$.  For center estimation, each coordinate is
contaminated only rarely and symmetrically, so the sample mean remains an
efficient estimator, whereas the coordinatewise median incurs its usual
efficiency loss; this favors $p=2$. Therefore, this indicates that the
mixed norm pair $(p,q)=(2,1)$ may be more favorable under this data generation setting with coordinate-sparse outliers.

All four alternating methods use the same 
$K$-means++ initialization in each
run.  We report metrics both on the full sample and on 
the uncontaminated observations.  The latter, denoted by
``clean,'' isolate recovery of the latent
cluster structure from the unavoidable contribution of the 
spikes to the full-sample distortion.  
Table~\ref{tab:claim1prime} reports means and standard
deviations over $80$ runs.

\begin{table}[!htbp]
\begin{center}
\small
\caption{Sparse high-dimensional corruption ($K=3$, $d=40$, $n=30$/cluster,
$\mathrm{sep}=0.6$; single $\pm 7$ spike on one of $40$ coordinates for $30\%$ of points. ) }
\label{tab:claim1prime}
\setlength{\tabcolsep}{5pt}
\begin{tabular}{l c r r r r}
\toprule
Method & ARI $\uparrow$ & WCD$_2$ $\downarrow$ & WCD$_1$ 
$\downarrow$ & WCD$_2$ (clean) $\downarrow$ & WCD$_1$ (clean) 
$\downarrow$ \\
\midrule
$K$-means \hfill ($p{=}q{=}2$)   & $0.664{\pm}0.24$ & $4872.7{\pm}257$ & $3065.7{\pm}75$ & $2596.7{\pm}230$ & $2050.8{\pm}149$ \\
$K$-medians \hfill ($p{=}q{=}1$) & $0.714{\pm}0.24$ & $4952.4{\pm}252$ & $\mathbf{2998.4}{\pm}69$ & $2583.2{\pm}222$ & $\mathbf{2000.7}{\pm}145$ \\
\textsc{Alter} \hfill ($p{=}1,q{=}2$)   & $0.683{\pm}0.22$ & $4949.9{\pm}258$ & $3024.6{\pm}79$ & $2605.9{\pm}225$ & $2020.0{\pm}145$ \\
\textsc{Alter} \hfill ($p{=}2,q{=}1$)   & $\mathbf{0.766}{\pm}0.23$ & $\mathbf{4830.0}{\pm}252$ & $3029.2{\pm}71$ & $\mathbf{2550.9}{\pm}217$ & $2026.6{\pm}146$ \\
\bottomrule
\end{tabular}
\end{center}

\vspace{-0.1in}

\noindent 
{\small {\bf Note:} Each column reports mean $\pm$ std over $80$
runs ($4$ center geometries $\times$ $20$ seeds).  ``(clean)'' 
columns exclude the spiked points.  Best value per column is 
highlighted in bold.}
\end{table}

\noindent The numerical results agree with our prediction.  
The $\ell_{2,1}$ method
has the highest mean ARI, the smallest full-sample 
$\mathrm{WCD}_2$, and the
smallest clean $\mathrm{WCD}_2$.  $K$-medians retains the smallest
$\mathrm{WCD}_1$, as expected from its fitting loss, but gives 
a lower mean ARI. 
%These results demonstrate the flexibility of the proposed model \eqref{eq:pq} and a performance gain with an appropriate choice of $p, q \in \{1, 2\}$.

\subsubsection{Synthetic data with dense heavy-tailed contamination}
\label{sec:dense-corruption}

Next, we consider the noisy data with dense heavy-tailed contamination. Consider $K=6$ and the cluster
centers $\mu_1,\ldots,\mu_6\in\mathbb{R}^6$ are drawn once as
$\mu_k\sim\mathcal{N}(0,4^2I_6)$ using random seed $0$ and are shared
across all runs.  For every random seed $r\in\{42,7,99,123,2024\}$ and every cluster $k$,
we generate
\begin{enumerate}
 \item $n_{\mathrm{in}}=50$ uncontaminated observations from a normal distribution
 $\mathcal{N}(\mu_k,0.8^2I_6)$; and
 \item $n_{\mathrm{out}}=16$ contaminated observations of the form
 $\mu_k+2.5z$, where the six coordinates of $z$ are independently generated standard
 Cauchy variables.
\end{enumerate}
Pooling and permuting the observations gives $N=396$, of which
$96/396\approx24\%$ are contaminated.  All methods are fitted to the complete
sample, whereas ARI and the reported distortions in
Table~\ref{tab:claim1} are evaluated on the uncontaminated observations. Here the contamination affects every coordinate and directly challenges the
center estimator.  Euclidean assignment remains aligned with the spherical
cluster geometry, while the coordinatewise median protects the fitted centers
from the Cauchy tails.  Therefore, the mixed pair $(p,q)=(1,2)$ will be favorable based on the above discussion.

We can observe the consistent performance from the numerical results. $K$-means combines the desired
assignment geometry with a nonrobust mean and attains an ARI of $0.524$ on average.
$K$-medians improves robustness but changes the assignment geometry, reaching a higher
ARI of $0.750$ on average.  The $\ell_{1,2}$ mixed-norm clustering combines Euclidean assignment and
median fitting. It attains the highest average ARI of $0.805$, and the smallest
average $\mathrm{WCD}_2$ of $3797.0$, among the compared methods.  The smallest
$\mathrm{WCD}_1$ of $1709.6$, is obtained by PIP for the matched
$\ell_{1,1}$ model, which is consistent with that model's target loss.

The IP refinements also separate modeling error from optimization error.  PIP
raises the average ARI of $K$-medians from $0.750$ to $0.803$, bringing the
matched $\ell_1$ model close to the mixed model but not beyond its average ARI.
For $(p,q)=(2,1)$, PIP reduces the reported average $\mathrm{WCD}_2$ from
$12983.5$ to $12702.8$, yet the resulting ARI remains in the bottom tier.
Thus the poor performance of $\ell_{2,1}$ on this dense heavy-tailed design is
primarily statistical rather than a consequence of an inferior alternating
solution.  Because the table evaluates only uncontaminated observations while
PIP accepts steps using the full-sample objective, clean-sample distortion is
not required to decrease on every individual seed.

\definecolor{pairshade}{gray}{0.92}
\newcommand{\pr}{\cellcolor{pairshade}}

\begin{table}[t]
{\begin{center}
\caption{Contaminated spherical clusters: $(K,d,N,\varepsilon) =
(6,6,396,24\%)$
% ($K=6$, $d=6$, $N=396$, $\varepsilon\approx24\%$ 
within-cluster Cauchy noise.}
\label{tab:claim1}
\small
\setlength{\tabcolsep}{3pt}
\renewcommand{\arraystretch}{0.95}
\begin{tabular}{c c l r r r @{\hspace{1.2em}} c c l r r r}
\toprule
Seed & $(p,q)$ & Method & ARI & $\mathrm{WCD}_1$ & $\mathrm{WCD}_2$ &
Seed & $(p,q)$ & Method & ARI & $\mathrm{WCD}_1$ & $\mathrm{WCD}_2$ \\
\midrule
\multirow{6}{*}{42}
 & $(2,2)$ & $K$-means & 0.681 & 2980.2 & 7338.9 &
\multirow{6}{*}{123}
 & $(2,2)$ & $K$-means & 0.330 & 4060.5 & 14072.8 \\
\cmidrule(lr){2-6}\cmidrule(l){8-12}
 & \multirow{2}{*}{$(1,1)$} & $K$-medians & 0.566 & 2281.0 & 7285.4 &
 & \multirow{2}{*}{$(1,1)$} & $K$-medians & 0.566 & 2282.1 & 7299.5 \\
 & & PIP & 0.821 & 1704.7 & 3645.5 &
 & & PIP & 0.566 & 2282.1 & 7299.5 \\
\cmidrule(lr){2-6}\cmidrule(l){8-12}
 & $(1,2)$ & \textsc{Alter} & 0.821 & 1800.6 & 3586.1 &
 & $(1,2)$ & \textsc{Alter} & 0.563 & 2295.8 & 7217.6 \\
\cmidrule(lr){2-6}\cmidrule(l){8-12}
 & \multirow{2}{*}{$(2,1)$} & \textsc{Alter} & 0.567 & 3151.1 & 8521.5 &
 & \multirow{2}{*}{$(2,1)$} & \textsc{Alter} & 0.327 & 3906.6 & 14256.8 \\
 & & PIP & 0.561 & 2746.6 & 7117.9 &
 & & PIP & 0.327 & 3906.6 & 14256.8 \\
\midrule
\multirow{6}{*}{7}
 & $(2,2)$ & $K$-means & 0.567 & 3861.9 & 18747.2 &
\multirow{6}{*}{2024}
 & $(2,2)$ & $K$-means & 0.474 & 4411.6 & 16211.7 \\
\cmidrule(lr){2-6}\cmidrule(l){8-12}
 & \multirow{2}{*}{$(1,1)$} & $K$-medians & 0.821 & 1697.2 & 3351.6 &
 & \multirow{2}{*}{$(1,1)$} & $K$-medians & 1.000 & 1150.7 & 1192.8 \\
 & & PIP & 0.813 & 1692.9 & 3809.4 &
 & & PIP & 1.000 & 1150.7 & 1192.8 \\
\cmidrule(lr){2-6}\cmidrule(l){8-12}
 & $(1,2)$ & \textsc{Alter} & 0.821 & 1780.5 & 3502.1 &
 & $(1,2)$ & \textsc{Alter} & 1.000 & 1148.5 & 1185.5 \\
\cmidrule(lr){2-6}\cmidrule(l){8-12}
 & \multirow{2}{*}{$(2,1)$} & \textsc{Alter} & 0.681 & 3531.6 & 12187.0 &
 & \multirow{2}{*}{$(2,1)$} & \textsc{Alter} & 0.330 & 4324.8 & 15713.2 \\
 & & PIP & 0.681 & 3531.6 & 12187.0 &
 & & PIP & 0.330 & 4324.8 & 15713.2 \\
\midrule
\multirow{6}{*}{99}
 & $(2,2)$ & $K$-means & 0.567 & 3625.9 & 13402.1 &
\multirow{6}{*}{avg}
 & $(2,2)$ & $K$-means & 0.524 & 3788.0 & 13954.5 \\
\cmidrule(lr){2-6}\cmidrule(l){8-12}
 & \multirow{2}{*}{$(1,1)$} & $K$-medians & 0.800 & 1717.0 & 4035.9 &
 & \multirow{2}{*}{$(1,1)$} & $K$-medians & 0.750 & 1825.6 & 4633.1 \\
 & & PIP & 0.813 & 1717.5 & 3845.5 &
 & & PIP & 0.803 & \textbf{1709.6} & 3958.5 \\
\cmidrule(lr){2-6}\cmidrule(l){8-12}
 & $(1,2)$ & \textsc{Alter} & 0.821 & 1716.7 & 3493.8 &
 & $(1,2)$ & \textsc{Alter} & \textbf{0.805} & 1748.4 & \textbf{3797.0} \\
\cmidrule(lr){2-6}\cmidrule(l){8-12}
 & \multirow{2}{*}{$(2,1)$} & \textsc{Alter} & 0.413 & 4027.3 & 14239.2 &
 & \multirow{2}{*}{$(2,1)$} & \textsc{Alter} & 0.464 & 3788.3 & 12983.5 \\
 & & PIP & 0.413 & 4027.3 & 14239.2 &
 & & PIP & 0.462 & 3707.4 & 12702.8 \\
\bottomrule
\end{tabular}
\end{center}}

\vspace{-0.1in}

\noindent 
{\small {\bf Note:} ARI, $\mathrm{WCD}_1$, and $\mathrm{WCD}_2$ 
are evaluated on the
uncontaminated observations.  The $(p,q)$ column identifies the 
norm pair; an entry spanning two rows marks an 
alternating--PIP pair sharing the same
objective, whereas $(2,2)$ and $(1,2)$ have no PIP counterpart 
in this table.  Averaged over the five seeds, PIP $(p{=}q{=}1)$ 
requires
$242.4$\,s and PIP $(p{=}2,q{=}1)$ requires $49.4$\,s per run; the
alternating methods terminate in less than $0.01$\,s.}
\end{table}

\subsection{IP can escape bad fixed points of the alternating 
algorithm}

\label{sec:escaping-alternating}

The preceding experiments show that choosing an appropriate norm pair matters.
We next hold the norm pair fixed and ask whether integer programming can improve
the solution returned by alternating optimization.  We begin with controlled
adversarial initializations, for which the alternating methods deliberately
enter inferior basins, and then consider a large-$K$ experiment in which the
same phenomenon arises under ordinary $K$-means++ initialization.

\subsubsection{Controlled stress tests with adversarial initialization}
\label{sec:adversarial-initialization}

Tables~\ref{tab:gaussian_adv} and~\ref{tab:cauchy_adv} use prescribed adverse
initial centers on, respectively, a four-component Gaussian mixture and a
four-component truncated-Cauchy mixture.  Both designs share the same four cluster centers,
$\mu_1=(-8,0)$, $\mu_2=(2,8)$, $\mu_3=(2,-6)$, and $\mu_4=(12,0)$ in
$\mathbb{R}^2$, and differ only in the noise distribution.  For the Gaussian
mixture we draw $N=600$ observations, $150$ per cluster, from
$\mathcal{N}(\mu_k, 1.5^2 I_2)$.  For the truncated-Cauchy mixture we draw
$N=200$ observations, $50$ per cluster: each coordinate is generated as
$\mu_{k}+0.2\tan\!\big(\pi(u-\tfrac12)\big)$ with $u\sim\mathrm{Unif}(0.01,0.99)$,
that is, a standard Cauchy variable truncated to its central $98\%$ and scaled
by $0.2$.  The truncation keeps the sample bounded while preserving the heavy
tails that distinguish this design from the Gaussian one.

In place of $K$-means++, all methods receive a fixed set of
adverse initial
centers: \\ $\{(-0.5,9),(-2,-4),(10.5,0),(13.5,0)\}$ for the 
Gaussian mixture and
$\{(-1,7),(0,-4),(10,0),(14,0)\}$ for the Cauchy mixture.  
In both cases two
initial centers are placed near the same true cluster while
another true
cluster is left uncovered, so escaping requires an alternating 
method to move a
redundant center across the configuration.  The five seeds
$\{3,20,46,53,71\}$ control the data draw only.  The purpose is 
diagnostic:
because the norm pairs largely agree on the well-separated target 
partition, a
failure to recover it isolates the effect of the optimization 
method rather
than the modeling choice.  Each PIP run is warm-started from 
the corresponding alternating solution.

\begin{table}[t]
{\begin{center}
\caption{Four-component Gaussian mixture 
($K=4$, $N=600$, $d=2$) under adversarial initialization. }
\label{tab:gaussian_adv}
\small
\setlength{\tabcolsep}{3pt}
\renewcommand{\arraystretch}{0.95}
\begin{tabular}{c c l r r r @{\hspace{1.2em}} c c l r r r}
\toprule
Seed & $(p,q)$ & Method & ARI & $\mathrm{WCD}_1$ & $\mathrm{WCD}_2$ &
Seed & $(p,q)$ & Method & ARI & $\mathrm{WCD}_1$ & $\mathrm{WCD}_2$ \\
\midrule
\multirow{5}{*}{3}
 & $(2,2)$ & $K$-means & 0.627 & 2993.3 & 11991.7 &
\multirow{5}{*}{53}
 & $(2,2)$ & $K$-means & 0.621 & 3029.9 & 12406.3 \\
\cmidrule(lr){2-6}\cmidrule(l){8-12}
 & \multirow{2}{*}{$(1,1)$} & $K$-medians & 0.625 & 2986.8 & 12145.8 &
 & \multirow{2}{*}{$(1,1)$} & $K$-medians & 0.622 & 3010.7 & 13130.1 \\
 & & PIP & 0.625 & 2986.8 & 12145.8 &
 & & PIP & 0.622 & 3010.7 & 13130.1 \\
\cmidrule(lr){2-6}\cmidrule(l){8-12}
 & \multirow{2}{*}{$(2,1)$} & \textsc{Alter} & 0.627 & 2993.7 & 11992.4 &
 & \multirow{2}{*}{$(2,1)$} & \textsc{Alter} & 0.627 & 3019.0 & 12445.6 \\
 & & PIP & 1.000 & 1453.0 & 2722.3 &
 & & PIP & 0.996 & 1460.2 & 2780.0 \\
\midrule
\multirow{5}{*}{20}
 & $(2,2)$ & $K$-means & 0.623 & 2997.0 & 11916.1 &
\multirow{5}{*}{71}
 & $(2,2)$ & $K$-means & 0.627 & 2975.6 & 11781.4 \\
\cmidrule(lr){2-6}\cmidrule(l){8-12}
 & \multirow{2}{*}{$(1,1)$} & $K$-medians & 0.623 & 2981.9 & 12961.2 &
 & \multirow{2}{*}{$(1,1)$} & $K$-medians & 0.627 & 2967.9 & 12149.0 \\
 & & PIP & 0.623 & 2981.9 & 12961.2 &
 & & PIP & 1.000 & 1379.7 & 2499.8 \\
\cmidrule(lr){2-6}\cmidrule(l){8-12}
 & \multirow{2}{*}{$(2,1)$} & \textsc{Alter} & 0.627 & 2988.8 & 11934.5 &
 & \multirow{2}{*}{$(2,1)$} & \textsc{Alter} & 0.627 & 2975.0 & 11781.7 \\
 & & PIP & 1.000 & 1433.8 & 2613.0 &
 & & PIP & 1.000 & 1381.1 & 2494.3 \\
\midrule
\multirow{5}{*}{46}
 & $(2,2)$ & $K$-means & 0.619 & 3036.5 & 12341.3 &
\multirow{5}{*}{avg}
 & $(2,2)$ & $K$-means & 0.623 & 3006.4 & 12087.4 \\
\cmidrule(lr){2-6}\cmidrule(l){8-12}
 & \multirow{2}{*}{$(1,1)$} & $K$-medians & 0.621 & 3015.1 & 13338.1 &
 & \multirow{2}{*}{$(1,1)$} & $K$-medians & 0.624 & 2992.5 & 12744.8 \\
 & & PIP & 1.000 & 1441.1 & 2803.1 &
 & & PIP & 0.774 & 2360.0 & 8708.0 \\
\cmidrule(lr){2-6}\cmidrule(l){8-12}
 & \multirow{2}{*}{$(2,1)$} & \textsc{Alter} & 0.625 & 3026.2 & 12369.5 &
 & \multirow{2}{*}{$(2,1)$} & \textsc{Alter} & 0.627 & 3000.5 & 12104.7 \\
 & & PIP & 1.000 & 1446.6 & 2780.0 &
 & & PIP & \textbf{0.999} & \textbf{1434.9} & \textbf{2677.9} \\
\bottomrule
\end{tabular}
\end{center}}

\vspace{-0.1in}

\noindent 
{\small {\bf Note:}  ARI, $\mathrm{WCD}_1$, and
$\mathrm{WCD}_2$ use all observations.  The $(p,q)$ column 
identifies the norm pair; an entry spanning two rows marks an 
alternating--PIP pair sharing the same objective, 
whereas $(2,2)$ has no PIP counterpart.  Averaged over the five
seeds, PIP $(p{=}q{=}1)$ requires $66.2$\,s and PIP 
$(p{=}2,q{=}1)$ requires
$25.2$\,s per run; the alternating methods terminate in less than 
$0.01$\,s.} 
\end{table}

On the Gaussian instance, the three alternating methods 
remain near the same inferior partition on every seed, 
with ARI between $0.619$ and $0.627$.  PIP
for $(p,q)=(2,1)$ changes this outcome decisively: its 
average ARI is $0.999$,
and it attains ARI $1.000$ on four seeds and $0.996$ on the remaining seed.
PIP for $(p,q)=(1,1)$ also escapes on seeds $46$ and $71$, but not on the
other three.  The contrast shows that an alternating fixed point can be far
from the best solution accessible to the same model, and that a structured IP
search can cross the assignment barrier responsible for the failure.

\begin{table}[t]
\begin{center}
\caption{Four-component Cauchy mixture ($K=4$, $N=200$, $d=2$) under adversarial initialization.}
\label{tab:cauchy_adv}
\small
\setlength{\tabcolsep}{3pt}
\renewcommand{\arraystretch}{0.95}
\begin{tabular}{c c l r r r @{\hspace{1.2em}} c c l r r r}
\toprule
Seed & $(p,q)$ & Method & ARI & $\mathrm{WCD}_1$ & $\mathrm{WCD}_2$ &
Seed & $(p,q)$ & Method & ARI & $\mathrm{WCD}_1$ & $\mathrm{WCD}_2$ \\
\midrule
\multirow{5}{*}{3}
 & $(2,2)$ & $K$-means & 1.000 & 165.6 & 274.2 &
\multirow{5}{*}{53}
 & $(2,2)$ & $K$-means & 0.681 & 903.6 & 3788.3 \\
\cmidrule(lr){2-6}\cmidrule(l){8-12}
 & \multirow{2}{*}{$(1,1)$} & $K$-medians & 0.628 & 887.0 & 3761.3 &
 & \multirow{2}{*}{$(1,1)$} & $K$-medians & 0.606 & 902.4 & 5863.6 \\
 & & PIP & 1.000 & 159.6 & 278.1 &
 & & PIP & 0.665 & 894.2 & 5016.8 \\
\cmidrule(lr){2-6}\cmidrule(l){8-12}
 & \multirow{2}{*}{$(2,1)$} & \textsc{Alter} & 1.000 & 165.6 & 274.2 &
 & \multirow{2}{*}{$(2,1)$} & \textsc{Alter} & 0.666 & 909.7 & 3804.9 \\
 & & PIP & 1.000 & 165.6 & 274.2 &
 & & PIP & 0.681 & 903.6 & 3788.3 \\
\midrule
\multirow{5}{*}{20}
 & $(2,2)$ & $K$-means & 0.671 & 875.8 & 3564.4 &
\multirow{5}{*}{71}
 & $(2,2)$ & $K$-means & 0.665 & 880.7 & 3613.8 \\
\cmidrule(lr){2-6}\cmidrule(l){8-12}
 & \multirow{2}{*}{$(1,1)$} & $K$-medians & 0.626 & 873.2 & 3589.8 &
 & \multirow{2}{*}{$(1,1)$} & $K$-medians & 0.621 & 870.0 & 4066.9 \\
 & & PIP & 0.698 & 871.2 & 3667.3 &
 & & PIP & 1.000 & 177.2 & 343.6 \\
\cmidrule(lr){2-6}\cmidrule(l){8-12}
 & \multirow{2}{*}{$(2,1)$} & \textsc{Alter} & 0.681 & 874.5 & 3563.1 &
 & \multirow{2}{*}{$(2,1)$} & \textsc{Alter} & 0.675 & 878.2 & 3609.9 \\
 & & PIP & 0.692 & 873.4 & 3561.9 &
 & & PIP & 0.698 & 873.4 & 3596.7 \\
\midrule
\multirow{5}{*}{46}
 & $(2,2)$ & $K$-means & 0.658 & 900.5 & 3712.1 &
\multirow{5}{*}{avg}
 & $(2,2)$ & $K$-means & 0.735 & 745.2 & 2990.6 \\
\cmidrule(lr){2-6}\cmidrule(l){8-12}
 & \multirow{2}{*}{$(1,1)$} & $K$-medians & 0.618 & 893.9 & 5058.4 &
 & \multirow{2}{*}{$(1,1)$} & $K$-medians & 0.620 & 885.3 & 4468.0 \\
 & & PIP & 1.000 & 166.8 & 268.7 &
 & & PIP & \textbf{0.873} & \textbf{453.8} & \textbf{1914.9} \\
\cmidrule(lr){2-6}\cmidrule(l){8-12}
 & \multirow{2}{*}{$(2,1)$} & \textsc{Alter} & 1.000 & 171.8 & 265.5 &
 & \multirow{2}{*}{$(2,1)$} & \textsc{Alter} & 0.804 & 600.0 & 2303.5 \\
 & & PIP & 1.000 & 171.8 & 265.5 &
 & & PIP & 0.814 & 597.6 & 2297.3 \\
\bottomrule
\end{tabular}
\end{center}

\vspace{-0.1in}

{\small
{\bf Note:} Truncated
Cauchy scales $0.2$. ARI, $\mathrm{WCD}_1$, and $\mathrm{WCD}_2$ 
use all observations.  The $(p,q)$
column identifies the norm pair; an entry spanning two rows marks 
an alternating--PIP pair sharing the same objective, whereas 
$(2,2)$ has no PIP counterpart.  Averaged over the five seeds, 
PIP $(p{=}q{=}1)$ requires
$62.1$\,s and PIP $(p{=}2,q{=}1)$ requires $0.5$\,s per run; the 
alternating methods terminate in less than $0.01$\,s.}
\end{table}

The Cauchy experiment gives the same optimization message with 
a different
preferred fitting loss.  $K$-medians is trapped on every seed and 
has average ARI $0.620$.  PIP for $(p,q)=(1,1)$ reaches ARI 
$1.000$ on seeds $3$, $46$,
and $71$, raising the average to $0.873$ and reducing the average
$\mathrm{WCD}_1$ from $885.3$ to $453.8$.  PIP for $(p,q)=(2,1)$ produces a
more modest average increase from ARI $0.804$ to $0.814$.  The different
outcomes across the Gaussian and Cauchy experiments reinforce the two-level
interpretation of the proposed framework: the norm pair determines the
statistical landscape, while the IP refinement determines whether a poor
alternating basin can be escaped within the computational budget.

These runs should not be interpreted as global certificates for the original
clustering problem merely because an improvement is found.  The relevant
theoretical certificate applies when the self-generated restricted subproblem
is solved globally and the fixed-point condition is satisfied.  The empirical
conclusion here is narrower and directly supported by the tables: warm-started
PIP can move from an inferior alternating solution to a substantially better
feasible clustering.

\subsubsection{Large \texorpdfstring{$K$}{K} under ordinary
\texorpdfstring{$K$-means++}{K-means++} initialization}
\label{sec:largeK}

The adversarial experiments establish that alternating methods can be trapped,
but they do not show how often this matters under a standard initialization.
We therefore consider a small-$N$/large-$K$ grid of isotropic Gaussian
clusters.  The $K=25$ centers are placed on a regular $5\times5$ grid in
$\mathbb{R}^2$ with spacing $D=5\sigma$, where $\sigma=1$, and each center
coordinate is then displaced by an independent draw from
$\mathrm{Unif}(-0.2\sigma,0.2\sigma)$ to break the exact grid symmetry.  Every
cluster contributes $\lfloor N/K\rfloor=8$ observations drawn from
$\mathcal{N}(\mu_k,\sigma^2 I_2)$, giving $N=200$.  The seed controls both the
center jitter and the observation draw.  Although the ratio $D/\sigma=5$ makes
the target clusters well separated, the small number of observations per
cluster makes $K$-means++ susceptible to placing two initial centers in one
Voronoi cell while leaving another cell empty. Table~\ref{tab:largeK} compares PIP against its own alternating warm start for
both $(p,q)=(2,1)$ and $(p,q)=(1,1)$.  

\begin{table}[htbp]
\begin{center}
\caption{Small-$N$/large-$K$ grid ($K=25$, $\sigma=1$, spacing
$D=5\sigma$, $N=200$, $\approx8$ observations/cluster).}
\label{tab:largeK}
\small
\setlength{\tabcolsep}{2.8pt}
\begin{tabular}{lrrrrrrrr}
\toprule
\multicolumn{9}{l}{\textit{Pair A:} \textsc{Alter} $(p{=}2,q{=}1)$ versus PIP $(p{=}2,q{=}1)$
  \hfill objective: $\mathrm{WCD}_2$} \\
\cmidrule(lr){1-9}
 & \multicolumn{3}{c}{$\mathrm{WCD}_2$} & \multicolumn{4}{c}{ARI} & \\
\cmidrule(lr){2-4}\cmidrule(lr){5-8}
Seed & \textsc{Alter} & PIP & $\%\Delta$ & \textsc{Alter} & PIP & $\Delta$ & $\%\Delta$ & Time (s) \\
\midrule
42   & 350.8 & 350.8 & $+0.0$  & 0.905 & 0.905 & $+0.000$ & $+0.0$  & 61.4  \\
7    & 398.1 & 328.6 & $-17.5$ & 0.857 & 0.944 & $+0.087$ & $+10.1$ & 78.6  \\
99   & 406.3 & 403.5 & $-0.7$  & 0.872 & 0.871 & $-0.001$ & $-0.1$  & 61.8  \\
123  & 345.5 & 323.4 & $-6.4$  & 0.900 & 0.956 & $+0.057$ & $+6.3$  & 62.9  \\
2024 & 443.7 & 436.1 & $-1.7$  & 0.816 & 0.820 & $+0.005$ & $+0.6$  & 195.9 \\
\textbf{avg}
     & $388.9{\pm}36.7$ & $368.5{\pm}44.1$ & $\mathbf{-5.2}$
     & 0.870 & 0.899 & $\mathbf{+0.029}$ & $\mathbf{+3.4}$
     & $92.1{\pm}52.3$ \\
\midrule\midrule
\multicolumn{9}{l}{\textit{Pair B:} $K$-medians versus PIP $(p{=}q{=}1)$
  \hfill objective: $\mathrm{WCD}_1$} \\
\cmidrule(lr){1-9}
 & \multicolumn{3}{c}{$\mathrm{WCD}_1$} & \multicolumn{4}{c}{ARI} & \\
\cmidrule(lr){2-4}\cmidrule(lr){5-8}
Seed & $K$-medians & PIP & $\%\Delta$ & $K$-medians & PIP & $\Delta$ & $\%\Delta$ & Time (s) \\
\midrule
42   & 290.2 & 288.4 & $-0.6$ & 0.866 & 0.884 & $+0.018$ & $+2.0$ & 422.1 \\
7    & 294.1 & 278.6 & $-5.3$ & 0.859 & 0.900 & $+0.041$ & $+4.7$ & 422.2 \\
99   & 308.4 & 289.9 & $-6.0$ & 0.877 & 0.926 & $+0.049$ & $+5.6$ & 362.1 \\
123  & 279.9 & 279.9 & $+0.0$ & 0.945 & 0.945 & $+0.000$ & $+0.0$ & 302.0 \\
2024 & 323.7 & 309.3 & $-4.5$ & 0.778 & 0.829 & $+0.051$ & $+6.6$ & 543.8 \\
\textbf{avg}
     & $299.2{\pm}15.3$ & $289.2{\pm}11.0$ & $\mathbf{-3.4}$
     & 0.865 & 0.897 & $\mathbf{+0.032}$ & $\mathbf{+3.7}$
     & $410.5{\pm}80.2$ \\
\bottomrule
\end{tabular}
\end{center}

\vspace{-0.1in}

{\small 
\noindent {\bf Note:}  We compare each PIP
method with its alternating warm start on the corresponding 
target objective.
$\%\Delta<0$ denotes an objective reduction and
$\Delta\mathrm{ARI}>0$ denotes improved recovery.}
\end{table}

PIP improves the target objective on four of five seeds for 
both norm pairs
and returns the warm-start value on the remaining seed.  For $(p,q)=(2,1)$,
the average $\mathrm{WCD}_2$ reduction is $5.2\%$ and the average ARI rises
from $0.870$ to $0.899$; seed $7$ exhibits the largest change, with a
$17.5\%$ objective reduction and an ARI increase of $0.087$.  For
$(p,q)=(1,1)$, the average $\mathrm{WCD}_1$ reduction is $3.4\%$ and average
ARI rises from $0.865$ to $0.897$.  Objective improvement does not force ARI
to improve on every instance, as illustrated by the change of $-0.001$ on
seed $99$ in Pair~A, but the average recovery improves for both pairs. The timing contrast is also substantial.  The $(2,1)$ PIP requires
$92.1\pm52.3$ seconds on average, compared with $410.5\pm80.2$ seconds for
the $(1,1)$ PIP under the same large-$K$ protocol.  The results therefore show
that PIP can improve naturally occurring $K$-means++ failures at $K=25$, but
also that computational cost depends strongly on the fitting loss.  A returned
warm start should be interpreted as ``no accepted improvement under the stated
search protocol'' unless the global-subproblem premise of the fixed-point
theorem has additionally been verified.

\subsection{PIP makes the MIP model \eqref{eq:MIP} 
computationally affordable} \label{sec:claim3}

The previous numerical results show that an IP approach can improve the quality of the solutions obtained by the alternating algorithm. In this section, we demonstrate that the PIP method is essential for making the IP approach tractable, even for modest-scale data. We demonstrate this argument on two
instances with $N=300$. The first instance is the 
truncated-Cauchy mixture of
Section~\ref{sec:adversarial-initialization} with $N=300$ observations, $75$
per cluster, at seed $2024$, initialized at the adverse centers
$\{(-1,7),(0,-4),(10,0),(14,0)\}$; it has $NK=1{,}200$ binary assignment
variables.  The second is a set of $K=8$ isotropic Gaussian blobs in
$\mathbb{R}^2$ with $N=300$ and $\mathcal{N}(\mu_k, I_2)$ noise, whose centers
are sampled at seed $42$ subject to a minimum pairwise separation of
$2.5\sigma$, and which is initialized by $K$-means++; it has $NK=2{,}400$
binary variables.  The two instances therefore probe the two ways the full
formulation can fail: a hard initialization at moderate $K$, and a larger
binary block at an easy initialization. The full MIP receives a $300$-second time budget,
whereas PIP applies the progressive restrictions developed in
Section~\ref{sec: PIP-q=1}. From the results shown in 
Table~\ref{tab:fullip}, the full IP cannot close its optimality gap
within a $300$\,s budget, in one case returning a solution
worse than
its own warm start, while PIP solves the identical instances 
to proven subproblem optimality in seconds. 

\begin{table}[h]
\begin{center}
\small
\caption{%
  Full-IP intractability demonstration
  (data size $N{=}300$). 
}
\label{tab:fullip}
\setlength{\tabcolsep}{6pt}
\begin{tabular}{lrrrrr}
\toprule
Method & Bin.\ vars & WCD & ARI & Time (s) & MIPGap \\
\midrule
\multicolumn{6}{l}{\textit{(a) 4-Cauchy mixture, $K{=}4$ ($1{,}200$ binary variables)}} \\
Alter L2/L1 (warm start) &    0 & 5531.6  & 0.692 &   0.0 & — \\
Full IP & 1200 & 10681.3 & 0.687 & 300.0 & 96.8\% \\
PIP ($p=2, q=1$) & $\leq NK$ & 365.6 & 1.000 & 0.6 & 0 \\
\midrule
\multicolumn{6}{l}{\textit{(b) Isotropic Gaussian blobs, $K{=}8$ ($2{,}400$ binary variables)}} \\
Alter L2/L1 (warm start) &    0 & 559.2 & 0.985 &   0.0 & — \\
Full IP  & 2400 & 559.2 & 0.985 & 300.0 & 100\% \\
PIP ($p=2, q=1$) & $\leq NK$ & 556.8 & 1.000 & 122.2 & 0 \\
\bottomrule
\end{tabular}
\end{center}

\vspace{-0.1in}

{\small {\bf Note:} Full IP is given a $300$\,s budget; 
PIP solves to proven optimality (MIPGap\,${=}\,0$).
Dataset (a): adversarial initial solution: 
$\{({-}1,7),(0,{-}4),(10,0),(14,0)\}$.
Dataset (b): KMeans$++$ is set as initial solution.}
\end{table}

\paragraph{Why Full IP is intractable.}
A direct mixed-integer formulation of $\ell_{p,q}$ clustering introduces $NK$
binary assignment variables $z_{sk}\in\{0,1\}$, one for every
(point, cluster) pair.
Even at the modest scale of Table~\ref{tab:fullip}, this yields $1{,}200$
variables for $K{=}4$ and $2{,}400$ for $K{=}8$.
The core difficulty is a \emph{weak LP relaxation}: the big-$M$ linearization
of the Heaviside assignment constraints allows the LP solver to set fractional
$z_{sk}$ in ways that drastically underestimate the true integer optimum,
producing lower bounds far below any achievable clustered solution.
As a result, the branch-and-bound tree is enormous and {\sc Gurobi} cannot close
the gap within 300\,s.

Two failure modes are evident.
In case~(a), Full IP returns a solution \emph{worse} than the alternating
warm start (WCD\,$10{,}681$ vs.\ $5{,}532$) with a gap of $96.8\%$, meaning
the B\&B search has barely explored the feasible region in 5 minutes.
In case~(b), the LP lower bound is so loose that {\sc Gurobi} reports an infinite
gap: even though a good warm-start incumbent (WCD\,$559.2$, ARI\,$0.985$) is
available, the solver cannot certify that any improving solution exists within
the budget.

PIP avoids this by decomposing the problem into a \emph{sequence of
progressive sub-IPs}.
At each $\tau$-level, only the ambiguous assignment rows (those with
$\rho_{sk}\in(1/K{-}\varepsilon,\,0.5)$) remain as free binary variables;
high-confidence assignments are fixed, shrinking the active B\&B tree to a
fraction of the full problem.
The smaller, tighter sub-IPs converge rapidly: PIP certifies optimality
(MIPGap\,${=}\,0$) and recovers ARI\,${=}\,1.000$ in $0.6$\,s on the Cauchy
instance and $122.2$\,s on the Gaussian instance — orders of magnitude faster
than Full IP at equivalent or superior solution quality.

\subsection{Behavior when the alternating solution is already strong}
\label{sec:quality-support}

PIP is most visibly useful when it escapes a poor alternating solution, but a
complete evaluation should also examine the ordinary case in which the warm
start is already strong.  Table~\ref{tab:quality-support} collects nine
standard settings spanning well-separated, unequal-variance, anisotropic,
low-rank, and higher-dimensional mixtures. The first two settings reuse the four-component Gaussian and truncated-Cauchy
mixtures of Section~\ref{sec:adversarial-initialization}, now with $K$-means++
initialization.  Four low-dimensional settings use $N=300$ observations in
$\mathbb{R}^2$: isotropic blobs ($K=6$, unit variance, centers separated by at
least $6$); unequal-variance blobs ($K=5$, cluster standard deviations linearly
spaced from $0.3$ to $4.0$); anisotropic blobs ($K=5$, isotropic blobs passed
through the fixed shear $T=\bigl[\begin{smallmatrix}3&1.5\\0.5&1\end{smallmatrix}\bigr]$);
and orthogonal low-rank blobs ($K=3$, needle-shaped clusters with aspect ratio
$10$ elongated along $0^{\circ}$, $90^{\circ}$, and $45^{\circ}$).  Three
higher-dimensional settings use $N=1000$, $K=10$, and $d=5$: isotropic,
anisotropic (a fixed random shear normalized to the Frobenius norm of the
two-dimensional shear above), and low-rank blobs with aspect ratio $15$.  All
nine settings use $K$-means++ initialization and the standard seed set. Each data set is run with five
seeds.  The final column reports how often PIP returns the heuristic objective
unchanged (C) and how often it improves it (I).

\begin{table}[htbp]
\begin{center}
\caption{PIP behavior under $K$-means++ initialization. }
\label{tab:quality-support}
\small
\setlength{\tabcolsep}{4pt}
\begin{tabular}{l l r r r r c}
\toprule
Data set & Pair & Alter.\ obj. & PIP obj. & PIP ARI & Time (s) & C/I \\
\midrule
4-Gaussian ($K{=}4$, $N{=}600$)
  & $(2,1)$ & 2662.9 & 2662.9 & 1.000 & 0.7 & 5/0 \\
  & $(1,1)$ & 1423.4 & 1423.4 & 1.000 & 0.4 & 5/0 \\
\midrule
4-Cauchy ($K{=}4$, $N{=}200$)
  & $(2,1)$ & 262.2 & 262.2 & 1.000 & 0.1 & 5/0 \\
  & $(1,1)$ & 168.9 & 168.9 & 1.000 & 0.1 & 5/0 \\
\midrule
Isotropic ($K{=}6$, $d{=}2$)
  & $(2,1)$ & 587.6 & 587.6 & 1.000 & 58.5 & 5/0 \\
  & $(1,1)$ & 471.2 & 471.2 & 1.000 & 24.9 & 5/0 \\
\midrule
Unequal-var ($K{=}5$, $d{=}2$)
  & $(2,1)$ & 3740.2 & \textbf{3734.3} & 0.998 & 1.3 & 3/2 \\
  & $(1,1)$ & 1017.0 & 1017.0 & 0.998 & 1.0 & 5/0 \\
\midrule
Anisotropic ($K{=}5$, $d{=}2$)
  & $(2,1)$ & 3378.0 & \textbf{3366.1} & 0.904 & 51.5 & 3/2 \\
  & $(1,1)$ & 1103.3 & \textbf{1103.1} & 0.894 & 66.9 & 4/1 \\
\midrule
Orth.\ low-rank ($K{=}3$, $d{=}2$)
  & $(2,1)$ & 29617.8 & 29617.8 & 0.998 & 0.3 & 5/0 \\
  & $(1,1)$ & 2890.3 & 2890.3 & 0.998 & 0.1 & 5/0 \\
\midrule
Isotropic ($K{=}10$, $d{=}5$)
  & $(2,1)$ & 4958.8 & 4958.8 & 1.000 & 90.4 & 5/0 \\
  & $(1,1)$ & 3963.4 & 3963.4 & 1.000 & 117.5 & 5/0 \\
\midrule
Anisotropic ($K{=}10$, $d{=}5$)
  & $(2,1)$ & 12258.5 & 12258.5 & 0.999 & 77.5 & 5/0 \\
  & $(1,1)$ & 6028.2 & 6028.2 & 1.000 & 160.5 & 5/0 \\
\midrule
Low-rank ($K{=}10$, $d{=}5$)
  & $(2,1)$ & 228436.9 & 228436.9 & 1.000 & 8.7 & 5/0 \\
  & $(1,1)$ & 23351.8 & 23351.8 & 1.000 & 203.1 & 5/0 \\
\bottomrule
\end{tabular}
\end{center}

\vspace{-0.1in}

{\small \noindent {\bf Note:} Five seeds per data
set.  Every row compares one alternating--PIP pair on its own objective:
$\mathrm{WCD}_2$ for $(p,q)=(2,1)$ and $\mathrm{WCD}_1$ for
$(p,q)=(1,1)$.  All values in the columns 3-6 are the computed mean over seeds.  C/I is the number of seeds on
which PIP returned the alternating objective unchanged / improved it.}
\end{table}

Across the $45$ runs for each pair, PIP returns the alternating objective
unchanged in $41$ cases for $(p,q)=(2,1)$ and in $44$ cases for
$(p,q)=(1,1)$; it improves the remaining runs and never returns a worse target
objective.  These outcomes complement the earlier stress tests.  When
$K$-means++ already produces a high-quality solution, the progressive search
usually preserves it and incurs a cost ranging from less than one second to a
few minutes.  On the unequal-variance and anisotropic instances, it detects
small improvements missed by the alternating updates.

An unchanged outcome provides empirical evidence that the warm 
start is stable under the executed progressive search.  It 
becomes the local-optimality certificate established in 
Section~\ref{sec:theory} only when the relevant
self-generated subproblem is solved globally and all hypotheses 
of the corresponding theorem are satisfied.  This distinction 
separates the algorithm's unconditional feasible-descent property
from its conditional theoretical certificate.

% \section{Conclusion}
% \label{sec:conclusion}

\gap

\noindent {\bf Conclusion.}
This paper introduced $\ell_{p,q}$ clustering as a unified 
framework that separates the geometry used for centroid updates 
and the cluster assignments.  In our notation, the $\ell_q$ norm 
governs nearest-center assignment
and the $\ell_p$ norm governs within-cluster dispersion, which are allowed to be selected independently. This generalizes the classical K-means $(p = q = 2)$ and K-median models $(p = q = 1)$. To avoid exposing a general-purpose solver to solve all
$NK$ binary variables (for cluster assignments), we developed a progressive integer
programming method that adaptively fixes assignments based on the current centers and
retains integer variables only for ambiguous observation--cluster pairs.  For
$q=1$, difference-of-convex norm comparisons yield a convex mixed-integer inner
approximation, and the working set mechanism inserts the fixed-assignment
conditions that become necessary as the centers move.  The resulting
subproblems are MILPs for $p=1$ and convex MIQCPs for $p=2$ of much smaller dimensions. We give a precise characterization of the solutions obtained by the PIP method. Particularly, the global
optimality of the adaptively generated restricted subproblem implies strong
center-local optimality, and hence ordinary local optimality, for the original
mixed-norm model.  A converse holds under a practical sample-wise uniqueness assumption.  In the matched norm case $p=q$, we establish an equivalence between the global optimality of the simplified restricted
problem and strong center-local optimality, without requiring the sample-wise unique
centers assumption. For $q=1$, we further develop effective techniques for constructing adaptive fixing sets
and working sets. The numerical results illustrate distinct benefits of the mixed-norm clustering model and the PIP algorithm.
The preferred norm pair depends on the contamination geometry:
$\ell_{2,1}$ performs best under coordinate-sparse, mean-balanced corruption,
whereas $\ell_{1,2}$ is favored under dense coordinatewise Cauchy corruption.
Thus the mixed formulation provides flexibility that neither $K$-means nor
$K$-medians alone can offer.  Separately, we show that
alternating procedures can settle at poor feasible solutions, while PIP can escape
such solutions effectively. When the alternating
warm start is already strong, PIP usually preserves it and never worsens its
target objective in the reported experiments. The present results provide a
rigorous foundation for the mixed-norm clustering model and demonstrate that it can be both useful in noisy data clustering and computationally accessible.

\gap

\noindent {\bf Some questions remain open.}  
The present computational development
concentrates on $q=1$ and $p\in\{1,2\}$. A scalable treatment of the cases where $q = 2$ is an important next step.  Moreover, it would be valuable to weaken the separation assumptions required by
the convex inner approximation and to develop efficiently verifiable
certificates when the subproblems are solved inexactly.

\section*{Declarations}

\subsection*{Funding}

The work of Junyi Liu was supported by the National Natural Science
Foundation of China under Grant 72571155. The work of Yao Xie was
partially supported by the National Science Foundation under Grant
CMMI-2112533 and the Coca-Cola Foundation. The work of 
Yancheng Yuan was supported by the NSFC Young Scientists Fund 
(Project No.\ 12501440) and the RGC Early Career Scheme (Project
No.\ 25305424). The work of Yulin Peng and Jong-Shi Pang was 
supported by the U.S.\ Air Force
Office of Scientific Research under Grant FA9550-22-1-0045.

\subsection*{Conflict of interest}

The authors declare that they have no conflict of interest.

\subsection*{Code availability}

The code used for the numerical experiments is available from the
corresponding author upon reasonable request.


\begin{thebibliography}{999}

{\small

\bibitem{aloise2012improved}
{\sc D.\ Aloise, P.\ Hansen, and Leo Liberti}.
An improved column generation algorithm for minimum sum-of-squares 
clustering.
{\sl Mathematical Programming} 131: 195--220 (2012).

\vspace{-0.1in}

\bibitem{arthur2007kmeans++}
{\sc D.\ Arthur and S.\ Vassilvitskii}.
$k$-means{++}: The advantages of careful seeding.
{\sl Proceedings of the Eighteenth Annual ACM-SIAM Symposium on 
Discrete Algorithms (SODA)} (2007) pp.\ 1027--1035.

\vspace{-0.1in}

\bibitem{banerjee2005clustering}
{\sc A.\ Banerjee, S.\ Merugu, I.S.\ Dhillon, and J.\ Ghosh}.
Clustering with {Bregman} divergences.
{\sl Journal of Machine Learning Research} 6: 1705--1749 (2005).

\vspace{-0.1in}

\bibitem{bradley1997clustering}
{\sc P.S.\ Bradley, O.L.\ Mangasarian, and W.N.\ Street}.  
Clustering via concave minimization.
{\sl Advances in Neural Information Processing Systems} 
9: 368--374 (1997).

\vspace{-0.1in}

\bibitem{bradley1999mathematical}
{\sc P.S.\ Bradley, U.M.\ Fayyad, and O.L.\ Mangasarian}.
Mathematical programming for data mining: Formulations 
and challenges.
{\sl INFORMS Journal on Computing} 11(3): 217--238 (1999).

\vspace{-0.1in}

\bibitem{bradley2000kplane}
{\sc P.S.\ Bradley, O.L.\ Mangasarian}.
$k$-Plane clustering.  {\sl Journal of Global Optimization} 16(1): 
23--32 (2000).

\vspace{-0.1in}

\bibitem{cohen2021inapproximability}
{\sc V.\ Cohen-Addad, C.S.\ Karthik, and E.\ Lee}.
On approximability of clustering problems without candidate centers.
{\sl Proceedings of the 2021 ACM-SIAM Symposium on Discrete 
Algorithms (SODA)} (2021) pp.\ 2635--2648.

\vspace{-0.1in}

\bibitem{CuiLiuPang23-piecewise}
{\sc Y.\ Cui, J.\ Liu, and J.S.\ Pang}.
The minimization of piecewise functions: Pseudo stationarity.
{\sl Journal of Convex Analysis} 30(3): 793--834 (2023).

\vspace{-0.1in}

\bibitem{FangLiuPang25}
{\sc Y.\ Fang, J.\ Liu, and J.S. Pang}. 
Treatment learning with Gini constraints by Heaviside composite
optimization and a progressive method. 
{\sl Computational Optimization and Applications} 
92: 471--513 (2025).

\vspace{-0.1in}

\bibitem{gurobi}
{\sc gurobi Optimization, LLC}.
GUROBI Optimizer Reference Manual. (2023).
\url{https://www.gurobi.com}.

% \bibitem{gribel2019hg}
% {\sc D.\ Gribel and T.\ Vidal}.
% {HG}-means: {A} scalable hybrid genetic algorithm for minimum 
% sum-of-squares clustering.
% {\sl Pattern Recognition} 88: 569--583 (2019).

% \bibitem{gu2011joint}
% {\sc Q.\ Gu, Z.\ Li, and J.\ Han}.
% Joint feature selection and subspace learning.
% {\sl Proceedings of the Twenty-Second International 
% Joint Conference
% on Artificial Intelligence ({IJCAI})} (2011) pp.\ 1294--1299.

\vspace{-0.1in}

\bibitem{HanCuiPang25}
{\sc S.\ Han, Y.\ Cui, and J.S.\ Pang}.
Analysis of a class of minimization problems lacking 
lower semicontinuity.
{\sl Mathematics of Operations Research} 50(3): 2175--2198 (2025).

\vspace{-0.1in}

\bibitem{herold2026clustering}
{\sc M.G.\ Herold, E.\ Kipouridis, and J.\ Spoerhase}.
A broader view on clustering under cluster-aware norm objectives.
{\sl Proceedings of the 2026 Annual {ACM-SIAM} Symposium on 
Discrete Algorithms ({SODA})} (2026) pp.\ 758--793.

% \bibitem{jain2010clustering}
% {\sc A.K.\ Jain}.
% Data clustering: 50 years beyond {K}-means.
% {\sl Pattern Recognition Letters} 31(8): 651--666 (2010).

\vspace{-0.1in}

\bibitem{kaufman2009}
{\sc L.\ Kaufman and P.J.\ Rousseeuw}.
{\sl Finding Groups in Data: An Introduction to Cluster Analysis}.
John Wiley \& Sons (2009).

\vspace{-0.1in}

\bibitem{li2025modified}
{\sc M\ Li, M.R.\  Metel, and A.\ Takeda}.
Modified {K}-means algorithm with local optimality guarantees.
{\sl Proceedings of the 42nd International Conference on 
Machine Learning} (2025).
\url{https://openreview.net/forum?id=v68wPjgEQ8}

\vspace{-0.1in}

\bibitem{Lloyd1982}
{\sc S. Lloyd}.
Least squares quantization in PCM.
{\sl IEEE Transactions on Information Theory} 28:129--137 (1982).

\vspace{-0.1in}

% \bibitem{nie2010efficient}
% {\sc F.\ Nie, W.\ Huang, X.\ Cai, and C.h.\ Ding}.
% Efficient and robust feature selection via joint 
% $\ell_{2,1}$-norms 
% minimization.
% {\sl Advances in Neural Information Processing Systems} 
% 23: 1813--1821
% (2010).

\bibitem{macqueen1967}
{\sc J.\ MacQueen}.
Some methods for classification and analysis of 
multivariate observations.
Editors: L.M.\ Le Cam and J.\ Neyman.
{\sl Proceedings of the Fifth Berkeley Symposium on Mathematical
Statistics and Probability} (1967) pp.\ 281--297.

\vspace{-0.1in}
\bibitem{mahajan2009planar}
{\sc M.\ Mahajan, P.\ Nimbhorhar, and K.\ Varadarajan}.
The planar $k$-means problem is {NP}-hard.
{\sl Theoretical Computer Science} 442: 13--21 (2012).

\vspace{-0.1in}

\bibitem{PangRazaAlvarado17}
{\sc J.S.\ Pang, M.\ Razaviyayn, and A.\ Alvarado}.
Computing B-stationary points of nonsmooth dc 
programs.  {\sl Mathematics of Operations Research} 
42: 95--118 (2017).

\vspace{-0.1in}

\bibitem{peng2007approximating}
{\sc J.\ Peng and Y.\ Wei}.
Approximating {K}-means-type clustering via semidefinite 
programming.
{\sl SIAM Journal on Optimization} 18(1): 186--205 (2007).

\vspace{-0.1in}

\bibitem{piccialli2022sossdp}
{\sc V.\ Piccialli, A.M.\ Sudoso, and A.\ Wiegele}.
{SOS-SDP}: An exact solver for minimum sum-of-squares clustering.
{\sl INFORMS Journal on Computing} 34(4): 2144--2162 (2022).

\vspace{-0.1in}

\bibitem{QiCuiLiuPang19}
{\sc Z.\ Qi, Y.\ Cui, Y.\ Liu, and J.S.\ Pang}.
Estimation of individualized decision rules based on optimized 
covariate-dependent equivalent of random outcomes.
{\sl SIAM Journal on Optimization} 29(3): 2337--2362 (2019).

\vspace{-0.1in}

\bibitem{rao1971cluster}
{\sc M.R.\ Rao}.
Cluster analysis and mathematical programming.
{\sl Journal of the American Statistical Association} 66(335): 
622--626 (1971).

\vspace{-0.1in}

\bibitem{selim1984kmeans}
{\sc S.Z.\ Selim, and M.A.\ Ismail}.
{K}-means-type algorithms: A generalized convergence theorem 
and characterization of local optimality.
{\sl IEEE Transactions on Pattern Analysis and Machine Intelligence}
6(1): 81--87 (1984).

\vspace{-0.1in}

\bibitem{VielmaAhmedNemhauser10}
{\sc J.P.\ Vielma, S.\ Ahmed, and G.\ Nemhauser}.
Mixed-integer models for nonseparable piecewise-linear 
optimization: unifying framework and extensions.
{\sl Operations Research} 58(2): 303--315 (2010).

\vspace{-0.1in}

\bibitem{ZhangHanPang26}
{\sc X.\ Zhang, S.\ Han, and J.S.\ Pang}. 
Improving the solution of indefinite quadratic programs and linear 
programs with complementarity constraints by a 
progressive MIP method.
{\sl Mathematical Programming Computation} 18(1): 135--182 
(2026-03).

\vspace{-0.1in}

\bibitem{ZhengLiuWangPang26}
{\sc K.\ Zheng, J.\ Liu, R.\ Wang, and J.S.\ Pang}. 
Solving constrained affine Heaviside composite
optimization problems by a progressive IP approach. 
\url{https://arxiv.org/abs/2605.06020} (April 2026).

}
	
\end{thebibliography}
\end{document}